\newtheorem{thm}{Theorem}
\numberwithin{equation}{subsection}
\newtheorem{theorem}[subsection]{Theorem}
\newtheorem{lemma}[subsection]{Lemma}
\newtheorem{corollary}[subsection]{Corollary}
\newtheorem{conjecture}[subsection]{Conjecture}
\newtheorem{proposition}[subsection]{Proposition}
\theoremstyle{definition}
\newtheorem{caution}[subsection]{Caution}
\newtheorem{hypothesis}[subsection]{Hypothesis}
\newtheorem{example}[subsection]{Example}
\newtheorem{remark}[subsection]{Remark}
\newtheorem{notation}[subsection]{Notation}
\def\calA{\mathcal{A}}
\def\calB{\mathcal{B}}
\def\calO{\mathcal{O}}
\def\calS{\mathcal{S}}
\def\calW{\mathcal{W}}
\def\gothC{\mathfrak{C}}
\def\gothD{\mathfrak{D}}
\def\gothP{\mathfrak{P}}
\def\gothT{\mathfrak{T}}
\def\gothU{\mathfrak{U}}
\def\AAA{\mathbb{A}}
\def\CC{\mathbb{C}}
\def\FF{\mathbb{F}}
\def\GG{\mathbb{G}}
\def\NN{\mathbb{N}}
\def\QQ{\mathbb{Q}}
\def\RR{\mathbb{R}}
\def\ZZ{\mathbb{Z}}
\def\bfe{\mathbf{e}}
\def\rmM{\mathrm{M}}
\def\scrB{\mathscr{B}}
\def\i{\mathbf{i}}
\def\j{\mathbf{j}}
\def\k{\mathbf{k}}
\DeclareMathOperator{\HP}{HP}
\DeclareMathOperator{\Max}{Max}
\DeclareMathOperator{\NP}{NP}
\DeclareMathOperator{\Spf}{Spf}
\newcommand{\new}{\textrm{-}\mathrm{new}}
\newcommand{\AL}{\mathrm{AL}}
\newcommand{\ord}{\mathrm{ord}}
\newcommand{\Nm}{\mathrm{Nm}}
\newcommand{\Qp}{\QQ_p}
\newcommand{\rig}{\mathrm{rig}}
\newcommand{\wt}{\mathrm{wt}}
\newcommand{\Zp}{\ZZ_p}
\newcommand{\cl}{\mathrm{cl}}
\newcommand{\Err}{\mathbf{Err}}
\newcommand{\Matrix}[4]{\big( \begin{smallmatrix}
#1&#2\\#3&#4
\end{smallmatrix}\big)}
\newcommand{\MATRIX}[4]{ \begin{pmatrix}
#1&#2\\#3&#4
\end{pmatrix}}
\newcommand{\Frob}{\mathrm{Frob}}
\DeclareMathOperator{\GL}{GL}
\DeclareMathOperator{\Char}{Char}
\DeclareMathOperator{\Spc}{Spc}
\DeclareMathOperator{\Diag}{Diag}
\begin{document}

\title{Slopes of eigencurves over boundary disks}
\author{Daqing Wan}
\address{Daqing Wan,
University of California at Irvine,
Department of Mathematics, 340 Rowland Hall, Irvine, CA 92697, U.S.A.}
\email{dwan@math.uci.edu}
\author{Liang Xiao}
\address{Liang Xiao,
University of Connecticut, Department of Mathematics, 196 Auditorium Road, Unit 3009, Storrs, CT 06269--3009, U.S.A.}
\email{liang.xiao@uconn.edu}
\author{Jun Zhang}
\address{Jun Zhang, School of Mathematical Sciences, Capital Normal University, Beijing 100048, P.R. China.}
\email{junz@cnu.edu.cn}
\date{\today}
\begin{abstract}
Let $p$ be a prime number.
We study the slopes of $U_p$-eigenvalues on the subspace of modular forms that can be transferred to a definite quaternion algebra.
We give a sharp lower bound of the corresponding Newton polygon.
The computation happens over a definite quaternion algebra by Jacquet--Langlands correspondence; it generalizes a prior work of Daniel Jacobs \cite{jacobs} who treated the case of $p=3$ with a particular level.

In case when the modular forms have a finite character of conductor highly divisible by $p$, we improve the lower bound to show that the slopes of $U_p$-eigenvalues grow roughly like arithmetic progressions as the weight $k$ increases.  This is the first very positive evidence for Buzzard--Kilford's conjecture on the behavior of the eigencurve near the boundary of the weight space, that is proved for arbitrary $p$ and general level.
We give the exact formula of a fraction of the slope sequence.

\end{abstract}
\subjclass[2010]{11F33 (primary), 11F85 (secondary).}
\keywords{Eigencurves, slope of $U_p$-operators, quaternionic automorphic forms, overconvergent modular forms, Gouv\^ea--Mazur Conjecture, Gouv\^ea's conjecture on slopes}
\maketitle

\setcounter{tocdepth}{1}
\tableofcontents

\section{Introduction}
Let $p$ be a fixed prime number which we assume to be odd for simplicity in this introduction.
For $N$ a positive integer (the ``tame level") coprime to $p$, $k+1 \geq 2$ an integer (the ``weight")\footnote{For the subject we study, writing $k+1$ for the weight will simplify the presentation.}, $m$ a positive integer, and $\psi$ a character of $(\ZZ/p^m\ZZ)^\times$, we use $S_{k+1}(\Gamma_0(p^mN); \psi)$ to denote the space of modular cuspforms of weight $k+1$,  level $p^mN$, and nebentypus character $\psi$ over some finite extension $E$ of $\QQ_p$.
This space comes equipped with the action of Hecke operators, most importantly the action of the $U_p$-operator.
It is a central question in the theory of $p$-adic modular forms to understand the distributions of the ``slopes", namely, the $p$-adic valuations of the eigenvalues of $U_p$ acting on $S_{k+1}(\Gamma_0(p^mN); \psi)$, as the weight $k+1$ varies.
\emph{All $p$-adic valuations or norms in this paper are normalized so that $p$ has valuation $1$ and norm $p^{-1}$.}


One of the most interesting expectations concerns the case when the nebentypus character  $\psi$ has exact conductor $p^m$ for $m \geq 2$, i.e. $\psi$ does not factor through a character on $(\ZZ/p^{m-1}\ZZ)^\times$.
Let $\omega: (\ZZ/p\ZZ)^\times \to \ZZ_p^\times$ denote the Teichmu\"uller character.

The following question was asked by Coleman and Mazur \cite{coleman-mazur} and later elaborated by Buzzard and Kilford \cite{buzzard-kilford}.

\begin{conjecture}
\label{Conj:weak-buzzard-kilford}
Fix an integer $N$ coprime to $p$ and a character $\psi_0$ of $(\ZZ/p\ZZ)^\times$ such that $\psi_0(-1) =-1$.
Then
there exists a non-decreasing sequence of rational numbers $a_1, a_2, \dots$ approaching to infinity such that
\begin{itemize}
\item
for any integers $m\geq 2, k+1 \geq 2$, and any character $\psi$ of $(\ZZ/p^m\ZZ)^\times$ of exact conductor $p^m$ such that $\psi|_{(\ZZ/p\ZZ)^\times}\cdot \omega^k = \psi_0$, the slopes of $U_p$ acting on $S_{k+1}(\Gamma_0(p^mN); \psi)$
 is given by the first few terms of the sequence
\[
a_1/p^m,\ a_2 / p^m,\ \dots
\]
consisting of all numbers strictly less than $k$ and some equal to $k$'s.
\end{itemize}
Moreover, the sequence $a_1, a_2, \dots$ is a union of finitely many arithmetic progressions.
\end{conjecture}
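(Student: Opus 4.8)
\emph{Overall strategy.} The plan is to transport the problem, via the Jacquet--Langlands correspondence, to a totally definite quaternion algebra, where $U_p$ becomes an explicit matrix over the Iwasawa algebra $\ZZ_p[[w]]$, and then to show that the Newton polygon of its characteristic power series is \emph{equal to} --- not merely bounded below by --- an explicit model polygon $\NP_0$ whose slope sequence is a union of finitely many arithmetic progressions depending only on $\psi_0$. One first decomposes $S_{k+1}(\Gamma_0(p^mN);\psi)$ along oldforms in the tame direction: for each $N'\mid N$ the $N'$-new subspace contributes, and, the tame degeneracy maps commuting with $U_p$, the slope multiset of $U_p$ on the whole space is the union, with multiplicities, of the slope multisets on the $N'$-new parts. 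Whenever the $N'$-new part transfers to a totally definite quaternion algebra $D$ split at $p$ (which requires an odd number of Steinberg primes, so in particular fails for $N'=1$ --- the Buzzard--Kilford case $N=1$ already needs extra input), the transfer preserves $U_p$-eigenvalues and hence slopes; for the remaining $N'$ one must argue directly with overconvergent modular forms on $\GL_2$, which I expect to require a separate Buzzard--Kilford-type estimate or an auxiliary-prime level-raising trick. On the quaternionic side $U_p$ acts on a finite free module with a basis indexed, via strong approximation, by a finite combinatorial set attached to $\GL_2(\ZZ/p^m\ZZ)$, its matrix entries being monomials $\zeta^{j}\cdot(\text{weight-}k\text{ factor})$ with $\zeta=\psi(1+p)$ a primitive $p^{m-1}$-st root of unity; all of these, for varying $m,k,\psi$, are specializations of a single $U_p$-matrix $M(w)$ at the weight point $w=(1+p)^k\zeta-1$, for which $v(w)=1/(p^{m-2}(p-1))$.

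\emph{Model polygon and lower bound.} The second step builds the model polygon and proves one of the two required inequalities. Let $\NP_0$ be the lower convex hull formed from the ``diagonal'' valuations of $M(w)$ after rescaling the $i$-th basis vector by a fixed power of $p$; by construction its $i$-th slope is $a_i\cdot v(w)$ for a fixed rational sequence $(a_i)$, and since $v(w)=p^2/\bigl(p^m(p-1)\bigr)$ one absorbs the constant to put the slopes in the form $a_i/p^m$ of Conjecture~\ref{Conj:weak-buzzard-kilford}. The inequality $\NP(U_p)\ge\NP_0$ is exactly the sharp lower bound on the Newton polygon established earlier in this paper, obtained by estimating the valuations of all minors of $M(w)$; it already pins down the slopes as $a_i/p^m$ up to the first vertex beyond which the diagonal term is no longer automatically extremal, which is the ``exact formula of a fraction of the slope sequence'' advertised in the abstract.

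\emph{Reverse inequality, arithmetic-progression shape, and the main obstacle.} The decisive third step is the reverse inequality $\NP(U_p)\le\NP_0$, together with the arithmetic-progression shape of $(a_i)$. One must show the Newton polygon turns at \emph{every} predicted vertex $i$, i.e.\ that the leading $i\times i$ minor of $M(w)$ has valuation \emph{exactly} the value read off $\NP_0$ --- equivalently, that the diagonal term strictly dominates every competing term in the Leibniz expansion. The plan is to (i) reduce $M(w)$ modulo the power of $p$ dictated by $v(w)$; (ii) identify this reduction with a combinatorial operator on an $\FF_p$-module whose characteristic polynomial factors transparently, exploiting a self-similarity of quaternionic forms --- the weight-$(k+1)$ space embeds into the weight-$\bigl(k+1+p^{m-1}(p-1)\bigr)$ space compatibly with $U_p$ up to a monomial twist, so that $M(w)$ reproduces itself block by block; and (iii) conclude both that $\NP_0$ is attained and, iterating the self-similarity, that $(a_i)$ is eventually periodic with constant drift, hence a finite union of arithmetic progressions whose leading data depends only on $\psi_0=\psi|_{(\ZZ/p\ZZ)^\times}\cdot\omega^k$. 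The main obstacle, which is precisely the open heart of Buzzard--Kilford's conjecture, is ruling out cancellation among the off-diagonal entries for \emph{all} $m\ge 2$ --- in particular for $m=2$, where $v(w)=1/(p-1)$ is not small and the naive ``diagonal dominance'' heuristic breaks down, so that one probably has to realize the reduced $U_p$ as a geometric (\'etale or crystalline) correspondence with a priori algebraic, well-separated eigenvalues. That cancellation control, together with the non-transferable $\GL_2$-pieces from the first step, is where the genuine difficulty lies; everything else is bookkeeping layered on the lower bound and the self-similarity.
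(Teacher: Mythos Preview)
The statement you are attempting to prove is Conjecture~\ref{Conj:weak-buzzard-kilford}, and the paper does \emph{not} prove it. It is stated as an open conjecture; the paper's contribution is partial evidence (Theorems~A--D), not a proof. So there is no ``paper's own proof'' to compare against, and your proposal should be read as a strategy outline rather than a proof.

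Your outline does follow the paper's broad approach --- Jacquet--Langlands transfer to a definite quaternion algebra, an explicit matrix for $U_p$ with respect to a good integral basis, and a comparison of the Newton polygon to an explicit model (Hodge) polygon. But your proposal has exactly the gaps that keep the conjecture open, and you correctly flag them yourself:

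\begin{itemize}
\item \textbf{The reverse inequality (``diagonal dominance'') is not established.} In the paper the analogue of your step~(iii) is carried out only for $m\ge 4$ (Theorem~\ref{T:sharp Hodge bound}), and even then it does \emph{not} pin down all slopes: it shows the Newton polygon touches the Hodge bound at the points $(kt,\lambda_{kt})$, which confines the $n$th slope to the interval $[\lfloor n/t\rfloor,\lfloor n/t\rfloor+1]$, but does not determine it. The arithmetic-progression statement (Corollary~\ref{C:precise NP computation}) applies only to the initial segment where $\NP_r(s)<\HP_r(s-1)+1$, i.e.\ only to a \emph{fraction} of the slopes. Your claim that the Newton polygon turns at \emph{every} predicted vertex is exactly what is not known.
\item \textbf{The restriction $m\ge 4$ is essential to the paper's argument.} The key congruence in Lemma~\ref{L:congruence} needs the $zw$-coefficient to have valuation strictly bigger than $2$, which fails for $m\le 3$ (Remark~\ref{R:m=3}). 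Your step~(iii) asserts control for all $m\ge 2$, including $m=2$ where you yourself note the heuristic breaks down; no mechanism is offered to bridge this.
\item \textbf{The non-transferable pieces are not handled.} Your reduction to $N'$-new subspaces is fine, but for those $N'$ (including $N'=1$) with an even number of Steinberg primes there is no definite quaternion algebra to transfer to. You defer this to ``a separate Buzzard--Kilford-type estimate or an auxiliary-prime level-raising trick'' without supplying either. The paper sidesteps this entirely by restricting to the $\ell$-new part from the outset.
\end{itemize}

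In short, your proposal is a reasonable research plan that mirrors the paper's method, but it is not a proof: the ``cancellation control'' and the small-$m$ cases that you identify as obstacles are precisely what remain open, and the paper proves strictly less than your step~(iii) claims.
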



There has been many direct computations supporting this Conjecture in special cases, first by Buzzard and Kilford \cite{buzzard-kilford} (extending the work of Emerton \cite{emerton}) in the case when $p=2$ and $N=1$\footnote{We earlier excluded the case of $p=2$ for simple presentation; but slight modification allows us to include this case, as we will do for the rest of the paper.}, then in  many similar particular cases with small primes $p$ and small levels; see
\cite{roe, kilford, kilford-mcmurdy, jacobs}.
Nonetheless, this Conjecture was never recorded in the literature for lack of theoretic or heuristic evidences.
The goal of this paper is to provide some positive indications in the general case.

\subsection{The geometry of the eigencurve}

Before proceeding, we explain the meaning of Conjecture~\ref{Conj:weak-buzzard-kilford} in terms of the geometry of the eigencurve.

Eigencurves were introduced by Coleman and Mazur \cite{coleman-mazur} to $p$-adically interpolate modular eigenforms of different weights.
Here the notion of weights is generalized to mean a continuous character of $\ZZ_p^\times$; for examples, $x \mapsto x^k \psi(x)$ corresponds the case of classical weight ${k+1}$ with nebentypus character $\psi$.
In the loosest terms, the eigencurve
is a rigid analytic closed subscheme of  the product of the weight space and $\GG_m$, defined as the Zariski closure  of the set of pairs $(x^k\psi(x), a_p(f))$ for each eigenform $f$ of weight ${k+1}$ and nebentypus character $\psi$ with $U_p$-eigenvalue $a_p(f)$.
In particular, its fiber over the point $x^k\psi(x)$ of the weight space parametrizes the $U_p$-eigenvalues on the space of modular forms $S_{k+1}(\Gamma_0(p^mN); \psi)$ and the overconvergent ones.

The eigencurve plays a crucial role and has many applications in the modern $p$-adic number theory; to name one: Kisin's proof of Fontaine-Mazur conjecture \cite{kisin}.
Despite the many arithmetic applications, the geometry of the eigencurve was however poorly understood for a long time.
For example, the properness of the eigencurve was not known until the very recent work of  Diao and  Liu \cite{diao-liu}.

Conjecture~\ref{Conj:weak-buzzard-kilford} and this paper focus on another intriguing property: the behavior of the eigencurve near the boundary of the weight space.
The striking computation of Buzzard and Kilford \cite{buzzard-kilford} mentioned above shows that, when $p=2$ and $N=1$, the Coleman-Mazur eigencurve, when restricted over the boundary annulus of the weight space, is an infinite disjoint union of copies of this annulus.
This is  a family and a much stronger version of Conjecture~\ref{Conj:weak-buzzard-kilford}; see Conjecture~\ref{Conj:CM-conj} for the precise expectation.
Generalizing this result would have many number theoretical applications. For example, in \cite{pottharst-xiao}, the second author and Pottharst reduced the parity conjecture of Selmer rank for modular forms to this precise statement.


\subsection{Main result of this paper}
\label{SS:main result}
For the sake of presentation, we assume that there exists a prime number $\ell$ such that $\ell|| N$.
We only consider the subspace of modular forms which are $\ell$-new, denote by a superscript $\ell\new$, e.g. $S_{k+1}(\Gamma_0(p^mN); \psi)^{\ell \new}$.
This is the subspace of modular forms that can be identified by Jacquet--Langlands correspondence with the automorphic forms on a definite quaternion algebra $D$ which ramifies at $\ell$ and $\infty$.

The following lower bound of the Newton polygon of the $U_p$-action on $S_{k+1}(\Gamma_0(p^mN); \psi)^{\ell \new}$ might be known among some experts.\footnote{We think that Buzzard probably has an unpublished note on certain version of this theorem; see \cite{buzzard-slope}.}

\begin{thm}
\label{T:theorem A}
Assume that the conductor of $\psi$ is exactly $p^m$. (By our later convention, this will include the case when $\psi$ is trivial and $m=1$.)
Let $t$ denote $\dim S_2(\Gamma_0(p^mN); \psi)^{\ell \new}$ so that $\dim S_{k+1}(\Gamma_0(p^mN); \psi)^{\ell \new} = kt$.
Then the Newton polygon of the $U_p$-action on $S_{k+1}(\Gamma_0(p^mN); \psi)^{\ell \new}$ lies above the polygon with vertices
\[
(0,0), (t,0), (2t, t), \dots, (nt, \tfrac{n(n-1)}{2}t), \dots
\]
\end{thm}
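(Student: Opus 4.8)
The plan is to realize the $U_p$-action on $S_{k+1}(\Gamma_0(p^mN);\psi)^{\ell\mathrm{-new}}$ via the Jacquet-Langlands transfer to a space of automorphic forms on the definite quaternion algebra $D$, and then exhibit an explicit integral basis (an ``$\calO_E$-lattice'') of the relevant space on which $U_p$ acts by a matrix whose entries have controlled valuations. Concretely, on the quaternionic side the space of weight-$(k+1)$ forms is a sum, over the finitely many double cosets, of copies of $\Sym^{k-1}$ of the standard representation of $\GL_2(\Zp)$ (tensored by the nebentypus); this is the source of the dimension formula $\dim = kt$ with $t = \dim S_2(\Gamma_0(p^mN);\psi)^{\ell\mathrm{-new}}$. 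The operator $U_p$ is given by a sum of ``Hecke translates'' $\sum_{j \bmod p}\begin{pmatrix}1 & j \\ 0 & p\end{pmatrix}$ acting on the coefficient modules, and the key point is to understand the action of a single such matrix on $\Sym^{k-1}$ in the monomial basis.

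First I would fix, for each of the $t$ ``building blocks'', the basis $x^a y^{k-1-a}$ ($0 \le a \le k-1$) of $\Sym^{k-1}$, ordered so that powers of $p$ dividing the coefficients are visibly increasing. Acting by $\begin{pmatrix}1 & j \\ 0 & p\end{pmatrix}$ sends $x \mapsto x$, $y \mapsto jx + py$, so $x^a y^{k-1-a} \mapsto x^a (jx+py)^{k-1-a} = \sum_{i} \binom{k-1-a}{i} j^{i} p^{k-1-a-i} x^{a+i}y^{k-1-a-i}$; summing over $j \bmod p$ and using $\sum_{j} j^{i} \equiv 0 \pmod p$ unless $(p-1)\mid i$, one sees that the matrix of $U_p$ in this basis is (block) upper- or lower-triangular up to a controlled error, and that the $b$-th basis vector is sent into $p^{c_b}\cdot(\text{integral vectors})$ where $c_b$ is roughly the number of indices among $\{0,\dots,b-1\}$ that are ``$p$-units in the expansion''. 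The upshot is that, after grouping the $k$ basis vectors of each block into the $k \approx $ batches dictated by the valuation count, the Newton polygon of $U_p$ on the full $kt$-dimensional space is forced to lie above the polygon that has $t$ slope-$0$ segments, then $t$ slope-$1$ segments, then $t$ slope-$2$ segments, and so on — which is exactly the polygon with vertices $(0,0), (t,0), (2t,t), (3t,3t), \dots, (nt, \tfrac{n(n-1)}{2}t), \dots$. This is a standard ``Hodge bound''/Newton-above-Hodge type estimate: the Newton polygon of a matrix preserving a flag of lattices lies above the polygon recording the elementary divisors of the induced maps on the graded pieces.

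I would carry this out in the following order: (1) recall the quaternionic description of $S_{k+1}(\Gamma_0(p^mN);\psi)^{\ell\mathrm{-new}}$ and the formula for $U_p$ on it, reducing the problem to a purely local statement about $\Sym^{k-1}$; (2) compute the action of $\sum_{j\bmod p}\begin{pmatrix}1&j\\0&p\end{pmatrix}$ on the monomial basis, extracting the exact power of $p$ dividing each entry; (3) organize these valuations into a decreasing filtration of the standard $\Zp$-lattice $L = \Sym^{k-1}(\Zp^2)$ stable under $U_p$, with $\mathrm{gr}^n L$ of rank $t$ (after summing over the $t$ blocks) and $U_p$ acting on $\mathrm{gr}^n$ with valuation $\ge n$; (4) conclude by the elementary lemma that a matrix respecting such a filtration has Newton polygon above the stated ``Hodge polygon''.

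The main obstacle I expect is step (2)–(3): making the bookkeeping of $p$-valuations on $\Sym^{k-1}$ precise and uniform in $k$. The naive monomial filtration gives valuation jumps governed by $\lfloor \log_p(\cdot)\rfloor$-type quantities rather than the clean linear growth needed, so one has to choose the filtration cleverly — essentially by the ``digit count'' of the exponent in base $p$, or equivalently by how $U_p$ interacts with the congruences $\sum_{j} j^i \equiv 0$. Getting the filtration so that each graded piece has rank exactly $t$ and $U_p$ drops valuation by at least the right amount, with no off-by-one losses that would weaken the bound, is the delicate part; everything else is formal.
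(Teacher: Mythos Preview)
Your overall strategy --- Jacquet--Langlands transfer, explicit monomial basis on the quaternion side, and a Hodge-bound estimate from the $p$-divisibility pattern of the $U_p$-matrix --- is exactly the paper's approach. But two things in your write-up are off, and they are related.

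First, you cannot ``reduce the problem to a purely local statement about $\Sym^{k-1}$'' in the way you describe. On the quaternion side the space is $\bigoplus_{i=0}^{t-1}\Sym^{k-1}$ only as an $\calO$-module; the operator $U_p$ genuinely permutes the $t$ blocks. Concretely, if $D_f^\times = \coprod_i D^\times \gamma_i U$ with the $\gamma_i$ chosen to have trivial $p$-component, then $(U_p\varphi)(\gamma_i) = \sum_j \varphi(\gamma_{\lambda_{i,j}})||_\kappa \delta_{i,j,p}$, where each $\delta_{i,j}$ is a \emph{global} element of $D^\times$ of reduced norm $p$ (this is the paper's Proposition~4.3). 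So the matrix of $U_p$ is a $t\times t$ block matrix whose $(i,\lambda_{i,j})$-block records the action of $\delta_{i,j,p}$ on the coefficient module. The relevant matrices acting on $\Sym^{k-1}$ are therefore \emph{not} your coset representatives $\begin{pmatrix}1&j\\0&p\end{pmatrix}$ but the $p$-components $\delta_{i,j,p}$, and the key structural input is that these lie in $\begin{pmatrix}p\ZZ_p & \ZZ_p\\ p^m\ZZ_p & \ZZ_p^\times\end{pmatrix}$.

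Second, once you have this shape, the difficulty you anticipate simply does not arise. For $\gamma = \begin{pmatrix}a&b\\c&d\end{pmatrix}$ with $p\mid a$, $p^m\mid c$, $d\in\ZZ_p^\times$, the weight-$(k+1)$ action sends $z^n$ to $\psi(d)(az+b)^n(cz+d)^{k-1-n}$; since every $z$ in the expansion carries a factor of $a$ or $c$, the $z^i$-coefficient is divisible by $p^i$. (Your own computation with $x^a y^{k-1-a}\mapsto x^a(jx+py)^{k-1-a}$ already exhibits this: the $y^i$-coefficient is visibly divisible by $p^i$, no character sums needed.) So the naive monomial filtration gives \emph{exactly} the linear growth you want, not $\lfloor\log_p(\cdot)\rfloor$ growth; your step~(3) requires no cleverness and the sums $\sum_j j^i\equiv 0$ are irrelevant. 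After reordering the basis as $1_0,\dots,1_{t-1},z_0,\dots,z_{t-1},z_0^2,\dots$, the full matrix has its $i$-th $t\times t$ block-row divisible by $p^i$, and the Hodge bound follows immediately.
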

The complete proof is given in Theorem~\ref{T:weak Hodge polygon}. Note that the lower bound is independent of $k$, and thus uniform in $k$.
A similar uniform quadratic lower bound of Newton polygon was obtained by the first named author in \cite{wan} using a variant of Dwork's trace formula.
Our lower bound here is very sharp: the distance between the end point of the Newton polygon of $U_p$ acting on $S_{k+1}(\Gamma_0(p^mN); \psi)^{\ell \new}$ and our lower bound is \emph{linear} in $k$.

When the character $\psi$ is trivial, Theorem~\ref{T:theorem A} gives some theoretic evidence of a conjecture of Gouv\^ea on the distributions of slopes. But the method presented here is not enough to prove this conjecture of Gouv\^ea.  We refer to Remarks~\ref{R:improve Hodge bound classical forms} and \ref{R:heuristic gouvea} for related discussions.

We also point out that Theorem~\ref{T:theorem A} may suggest a very effective way to compute the eigencurve using definite quaternion algebras; the statement implies that the computation converges very well, comparable to the prevailing method of modular symbols.

The proof of Theorem~\ref{T:theorem A} (and the proof of the subsequent theorems in this paper) uses Jacquet--Langlands correspondence to transfer all information into automorphic forms for a definite quaternion algebra.
The advantage of working with definite quaternion algebra is its simpler geometry compared to the modular curves.
The theory of overconvergent automorphic forms on a definite quaternion algebra d'apr\`es Buzzard \cite{buzzard} come equipped with a nice integral basis.
Our computation essentially reproduces  Jacobs' thesis \cite{jacobs}, except taking a more theoretical as opposed to computational approach.

The real improvement over Jacobs' work is that, when the conductor $p^m$ of $\psi$ is large (e.g. $m \geq 4$), we can improve the lower bound above so that it \emph{agrees} with the Newton polygon (in the overconvergent setting) at infinitely many points which form an arithmetic progression.  This gives the following
\begin{thm}
\label{T:theorem B}
Keep the notation as in Theorem~\ref{T:theorem A} and assume that $m \geq 4$.
Let $a_0(k) \leq a_1(k) \leq \dots \leq a_{kt-1}(k)$ denote the slopes of the $U_p$-action on $S_{k+1}(\Gamma_0(p^mN); \psi)^{\ell \new}$, in non-decreasing order (with multiplicity).
Then we have
\[
 \lfloor \tfrac nt\rfloor \leq a_n(k)\leq \lfloor \tfrac nt\rfloor +1.
\]
\end{thm}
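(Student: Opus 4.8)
The plan is to run the entire argument on the definite quaternion algebra $D$, as in the proof of Theorem~\ref{T:weak Hodge polygon}, and to upgrade the one-sided valuation estimate used there into a \emph{two-sided} control on the coefficients of the characteristic power series of $U_p$ at indices divisible by $t$.

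\emph{Step 1: reduction to a coefficient estimate.} By Jacquet--Langlands identify $S_{k+1}(\Gamma_0(p^mN);\psi)^{\ell\new}$ with the weight-$(k+1)$ automorphic forms on $D$ and view them inside Buzzard's space $S^{D,\dagger}$ of overconvergent quaternionic forms, which carries a compact $U_p$ and an orthonormal basis $\{e_j\}_{j\ge0}$ over $\mathcal{O}_E$; by the control theorem the slopes in the theorem form an initial segment of the slope sequence of $\det(1-XU_p\mid S^{D,\dagger})=\sum_{n\ge0}c_nX^n$, so it suffices to bound all the slopes of the latter. Group the basis into blocks $B_n=\{e_{nt},\dots,e_{(n+1)t-1}\}$, with $t$ as in Theorem~\ref{T:theorem A}. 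Recall from the proof of Theorem~\ref{T:weak Hodge polygon} that the matrix of $U_p$ in this basis factors as $U=U'\cdot D$, where $D=\Diag\bigl(p^{\lfloor j/t\rfloor}\bigr)_{j\ge0}$ and $U'$ has entries in $\mathcal{O}_E$; this underlies the lower bound $a_n(k)\ge\lfloor n/t\rfloor$, which we take from that theorem. Expanding $c_{nt}$ as a signed sum of $nt\times nt$ principal minors of $U=U'D$, and noting that $S_0=\{0,1,\dots,nt-1\}$ is the \emph{unique} size-$nt$ index set minimizing $\sum_{j\in S}\lfloor j/t\rfloor$ (with minimum $\tfrac{n(n-1)}{2}t$), we obtain
\[
c_{nt}=(-1)^{nt}\,p^{\frac{n(n-1)}{2}t}\Bigl(\det\bigl(U'|_{S_0}\bigr)+p\,\varepsilon_n\Bigr),\qquad \varepsilon_n\in\mathcal{O}_E.
\]
Hence it is enough to prove, for $m\ge4$, that $v\bigl(\det(U'|_{S_0})+p\,\varepsilon_n\bigr)\le1$ for every $n$: combined with $a_n(k)\ge\lfloor n/t\rfloor$ and the convexity of the Newton polygon, this forces the $t$ slopes lying in block $B_n$ to sum to at most $nt+1$ while each is $\ge n$, so $a_n(k)\le\lfloor n/t\rfloor+1$.

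\emph{Step 2: computing $U'$ modulo $p^2$.} Using the explicit $U_p$-correspondence on $D$ --- summation over the cosets represented by $\bigl(\begin{smallmatrix}1&j\\0&p\end{smallmatrix}\bigr)$ for $0\le j\le p-1$ --- together with the explicit action on the power basis built from the weight $x\mapsto x^k$ and the nebentypus $\psi$ of conductor $p^m$, compute the reduction $\overline{U'}=U'\bmod p^2$. Each entry of $U'$ is a universal expression in binomial coefficients $\binom{k}{i}$ and character values $\psi(1+p^ac)$, and the crucial point is that when $m\ge4$ the relevant valuations $v_p\bigl(\psi(1+p^ac)-1\bigr)$, bounded by $\tfrac1{p^{m-2}(p-1)}$, are small enough that $\overline{U'}$ takes a transparent, essentially $k$-independent shape: block upper-triangular with explicitly described $t\times t$ diagonal blocks and a controlled off-diagonal part. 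One then reads off $v\bigl(\det(U'|_{S_0})\bigr)$ --- the sum of the valuations of the $n$ leading diagonal $t\times t$ minors --- checks that it is $\le1$, and verifies that the correction $p\,\varepsilon_n$ does not cancel the leading contribution; this gives $v(c_{nt})\le\tfrac{n(n-1)}{2}t+1$ for all $n$, uniformly in $k$, and Step~1 then finishes the proof.

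\emph{Main obstacle.} The technical heart is the explicit mod-$p^2$ computation of $U'$ and its uniformity in the weight $k$: one must track precisely how the weight-$k$ binomial factors interact with the ramified character $\psi$ to make the clean block shape appear exactly for $m\ge4$, and to see it degrade for $m=2,3$. A second, genuinely necessary point is excluding cancellation in the correction $p\,\varepsilon_n$, i.e.\ obtaining the constant $1$ rather than something larger; this needs the actual shape of $\overline{U'}$, not merely the entrywise valuation bound behind Theorem~\ref{T:weak Hodge polygon}. The same refined knowledge of $\overline{U'}$ --- pinning down when $\det(U'|_{S_0})$ is a unit --- is what produces the exact values of the positive-density subsequence of slopes promised in the introduction.
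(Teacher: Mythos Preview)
Your Step~1 contains two fatal errors that cannot be repaired within the framework you set up.

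\textbf{(a) The pointwise lower bound is not what Theorem~\ref{T:weak Hodge polygon} gives.}
You write ``$a_n(k)\ge\lfloor n/t\rfloor$ \dots\ which we take from that theorem.''  But Theorem~\ref{T:weak Hodge polygon} only asserts that the Newton polygon lies above the Hodge polygon; it does \emph{not} follow that the $n$th Newton slope dominates the $n$th Hodge slope.  For instance with $t=2$, the Hodge polygon with vertices $(0,0),(2,0),(4,2)$ is dominated by the straight Newton polygon with slopes $\tfrac12,\tfrac12,\tfrac12,\tfrac12$, yet $a_2=\tfrac12<1=\lfloor 2/2\rfloor$.  The individual lower bound $a_n\ge\lfloor n/t\rfloor$ only becomes available \emph{after} one knows the Newton polygon actually \emph{touches} a suitable Hodge polygon at every multiple of $t$.

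\textbf{(b) Your target estimate is false.}
You aim to show $v\bigl(\det(U'|_{S_0})+p\varepsilon_n\bigr)\le 1$, equivalently $v(c_{nt})\le \tfrac{n(n-1)}{2}t+1$.  Already for $n=1$ this fails: up to a bounded error the top-left $t\times t$ block of $U'$ is the matrix $\gothU_p^{\cl}(\psi_m)$ of $U_p$ on the \emph{classical} weight-$2$ space, and $v\bigl(\det\gothU_p^{\cl}(\psi_m)\bigr)=\sum_i\alpha_i(\psi_m)$ with $\alpha_i(\psi_m)\in[0,1]$ the Hodge slopes of that finite matrix.  In the paper's own Example~\ref{Ex:explicit slopes rare examples} ($p=3$, $m=4$, $t=9$) these slopes are $0,0,0,\tfrac12,\tfrac12,\tfrac12,1,1,1$, so $v(c_t)=\tfrac92\gg 1$.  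More generally Proposition~\ref{P:Up pair to p} forces $\sum_i\alpha_i(\psi_m)+\sum_i\alpha_i(\psi_m^{-1})=t$, so one of the two sums is at least $t/2$.  No ``block upper-triangular mod $p^2$'' computation can rescue this, because the determinant you are trying to bound genuinely has large valuation.

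What the paper does instead is precisely to absorb these $\alpha_i$'s.  The pairing identity $\langle U_p\varphi,U_p\varphi'\rangle=p\langle S_p\varphi,\varphi'\rangle$ (Proposition~\ref{P:Up pair to p}) shows all $\alpha_i(\psi_m\omega^{-2r})\in[0,1]$ (Corollary~\ref{C:Hodge polygon less eq 1}); this produces an \emph{improved} Hodge polygon whose block-$r$ slopes are $\alpha_i+r\in[r,r+1]$.  The paper then proves, via a deformed version of the same pairing (Lemma~\ref{L:determine of Up deformed}), that $c_{kt}(w)$ is $p^{\lambda_{kt}}$ times a \emph{unit} in $\calO\langle w\rangle$, so the Newton polygon passes \emph{exactly} through $(kt,\lambda_{kt})$ for every $k$.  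It is this touching --- not a ``within $+1$'' estimate --- that pins the slopes in block $r$ between $r$ and $r+1$.  The pairing argument is the missing idea in your proposal; without it neither the correct lower bound on individual slopes nor the needed control on $v(c_{nt})$ is available.
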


This is proved in Theorem~\ref{T:sharp Hodge bound}. Note that the inequality of the slopes does not depend on the weight $k+1$.
In fact, we prove a family version of such inequality which gives rise to a  decomposition (Theorem~\ref{T:improved main theorem}) of the eigencurve over the disks $\calW(x\psi, p^{-1})$ of radius $p^{-1}$ centered around the character $x\psi$, just as in Buzzard--Kilford \cite{buzzard-kilford}.  Unfortunately, we cannot extend this result to the entire weight annulus of radius $p^{-1/p^{m-2}(p-1)}$ which contains $x\psi$.\footnote{In recent joint work of the first two authors and Ruochuan Liu \cite{liu-wan-xiao}, we extend this result to the entire boundary of the weight space, through using a different basis for the overconvergent automorphic forms.  Many ideas of \cite{liu-wan-xiao} are taken from this paper.}

The main idea of the proof consists of two major inputs: (1) We show that there is a natural isomorphism
\begin{equation}
\label{E:overconvergent = classical}
S^{D, \dagger}(U; \kappa) \cong \widehat\bigoplus_{n=0}^\infty S_2^D(U; \psi \omega^{-2n}) \otimes (\omega^n \circ \det),
\end{equation}
such that the $U_p$-action on the left hand side is  ``approximately" the action of $\bigoplus_{n\geq 0} (p^n \cdot U_p)$ on the right hand side.
Here the letter $U$ is the corresponding level structure which looks like $\Gamma_0(p^m)$ at $p$;
$S^{D, \dagger}(U; \kappa)$
stands for the space of overconvergent automorphic forms over a definite quaternion algebra $D$ with weight character $\kappa$ living in $\calW(x\psi, p^{-1})$;
the right hand side is the completed direct sum of \emph{classical} automorphic forms over $D$ of \emph{weight $2$} with characters $\psi \omega^{-2n}$, \emph{twisted by the character $\omega^n \circ \det$}.
It thus follows that the $U_p$-slopes on $S^{D, \dagger}(U; \kappa)$ is approximately determined by the $U_p$-slopes on  these space of classical forms of weight $2$.

(2) To carry out the approximation in \eqref{E:overconvergent = classical}, it is important to show that the slopes of the \emph{Hodge polygon} of the $U_p$-action on each $S_2^D(U; \psi\omega^{-2n})$ are between $0$ and $1$. Here the Hodge polygon
of the matrix for  the $U_p$-action
 refers to the convex hull of points given by the minimal $p$-adic valuation of the minors of the matrix.
To prove this key result, we make use of (in the definite quaternion situation) the Atkin--Lehner map (see \ref{S:Atkin-Lehner})
\[
\AL_{\psi_m}: S_2^D(U; \psi_m) \longrightarrow S_2^D(U; \psi_m^{-1}) \otimes (\psi_m \circ \det)
\]
and the fact that $U_p\circ\AL_{\psi_m}\circ U_p(\varphi) = p \cdot \AL_{\psi_m}\circ S_p(\varphi)$, where $S_p$ is the unramified central character action at $p$.
In fact, we also need certain deformed version of this map in order to improve the result from the open disks of radius $p^{-1}$ to the closed disks of the same radius.  This small improvement is essential to Theorem~\ref{T:theorem B}.
We refer to Section~\ref{Section:improve Hodge polygon} for details.

We also point out that the condition $m\geq 4$ is currently an unfortunate technical condition.
See Remark~\ref{R:m=3} for the discussion in the case when $m=3$.

A consequence of the proof of Theorem~\ref{T:theorem B} is that we can in fact show that some of the slopes indeed form arithmetic progressions.

\begin{thm}
\label{T:theorem C}
Keep the notation as in Theorem~\ref{T:theorem B}.
Fix $r \in \{0,1, \dots, \frac{p-3}{2}\}$.
Let $\NP_r(i)$ and $\HP_r(i)$ denote the Newton polygon and Hodge polygon functions for the $U_p$-action on $S_2(\Gamma_0(p^mN); \psi \omega^{-2r})^{\ell \new}$.
Suppose that $(s_0, \NP_r(s_0))$ is a vertex of the Newton polygon $\NP_r$ and suppose that
\[
\NP_r(s) < \HP_r(s-1) + 1 \textrm{ for all }s =1, \dots, s_0.
\]
Then for any $s = 0,1, \dots, s_0$, the following subsequence
\[
a_{s+rt}(k),\ a_{s+rt+\frac{p-1}{2}t}(k),\ \dots, \  a_{s+ rt+ i \frac{p-1}2t}(k),\ \dots
\]
is independent of the positive integer $k$ whenever $k \equiv 2r+1 \bmod{p-1}$ (and whenever it makes sense) and it forms an arithmetic progression with common difference $\frac{p-1}{2}$.
\end{thm}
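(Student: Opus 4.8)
The plan is to read off the statement from the isomorphism \eqref{E:overconvergent = classical} and the analysis already carried out for Theorem~\ref{T:theorem B}. First I would transfer $S_{k+1}(\Gamma_0(p^mN);\psi)^{\ell\new}$ to the definite quaternion algebra $D$ via Jacquet--Langlands and realize its $U_p$-action as the bottom part of the $U_p$-action on $S^{D,\dagger}(U;\kappa_k)$, where $\kappa_k$ is the weight character attached to weight $k+1$ and nebentypus $\psi$. The congruence $k\equiv 2r+1\pmod{p-1}$ enters twice: it forces all the $\kappa_k$ to lie in a single disk of radius $p^{-1}$, independent of $k$, so that \eqref{E:overconvergent = classical} applies uniformly; and, combined with the shape of \eqref{E:overconvergent = classical}, it pins down which weight-$2$ space $S_2^D(U;\chi_N)$ (with $\chi_N$ of conductor $p^m$) occurs in the $N$-th summand, $\chi_N$ depending only on $N$ modulo $\tfrac{p-1}2$; in particular the indices $N$ with $\chi_N=\psi\omega^{-2r}$ form an arithmetic progression of common difference $\tfrac{p-1}2$, say $N_0<N_1<\cdots$. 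In a basis adapted to \eqref{E:overconvergent = classical}, the matrix of $U_p$ on $S^{D,\dagger}(U;\kappa_k)$ is $\bigoplus_{N\ge0}p^NA_N$ plus a correction whose $N$-th block-row is divisible by $p^{N+1}$, where $A_N$ is the integral $U_p$-matrix on $S_2^D(U;\chi_N)$. By Theorem~\ref{T:theorem A} the dimension is $kt$, so $a_0(k)\le\cdots\le a_{kt-1}(k)$ are exactly the $kt$ smallest slopes of the Newton polygon of $U_p$ on $S^{D,\dagger}(U;\kappa_k)$, contributed by the blocks $N=0,\dots,k-1$; unwinding \eqref{E:overconvergent = classical} and the choice of $\kappa_k$ identifies $a_{s+rt+i\frac{p-1}{2}t}(k)$ with the $(s+1)$-st smallest slope contributed by the $i$-th block whose weight-$2$ space is $S_2^D(U;\psi\omega^{-2r})$.

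Next I would recall the two facts underlying the proof of Theorem~\ref{T:theorem B}. (i) By the Hodge-polygon estimate of Section~\ref{Section:improve Hodge polygon}, the Hodge slopes, hence also the Newton slopes, of $U_p$ acting on every $S_2^D(U;\chi)$ with $\chi$ of conductor $p^m$ lie in $[0,1]$; therefore the $N$-th diagonal block $p^NA_N$ contributes slopes in $[N,N+1]$, and the correction, being divisible by $p^{N+1}$ on the $N$-th block-row, cannot push any of these below $N$. Hence the bottom $kt$ slopes split as a disjoint union over $N=0,\dots,k-1$ of the slopes of the $N$-th block, and inside block $N$ they refine $N+\NP_{\chi_N}$ up to the perturbation $A_N\mapsto A_N+pB_N$ induced by the off-diagonal part of \eqref{E:overconvergent = classical}, where $\NP_{\chi_N}$ is the Newton polygon of $U_p$ on $S_2^D(U;\chi_N)$. (ii) Writing $e_s(\cdot)$ for the $s$-th elementary symmetric function of the eigenvalues of a matrix (equivalently, up to sign, the $T^s$-coefficient of its characteristic power series), the perturbation changes $e_s(A_N)$ only by a quantity of valuation $\ge\HP_{\chi_N}(s-1)+1$: expand an $s\times s$ principal minor of $A_N+pB_N$ multilinearly in its rows, Laplace-expand each term along the rows coming from $pB_N$, and use that the Hodge slopes of $A_N$ are $\le 1$.

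The hypotheses of the theorem then make the perturbation invisible below $s_0$. Taking a block $N$ with $\chi_N=\psi\omega^{-2r}$: since $(s_0,\NP_r(s_0))$ is a vertex of $\NP_r$ one has $v(e_s(A_N))=\NP_r(s)$ at every vertex $s\le s_0$ of $\NP_r$; since $\NP_r(s)<\HP_r(s-1)+1$ for $s\le s_0$ the perturbation term has strictly larger valuation, so $v(e_s(A_N+pB_N))=\NP_r(s)$ at all these vertices; and since $\NP_r$ is linear between consecutive vertices, convexity of Newton polygons forces the Newton polygon of the $N$-th block to agree with $N+\NP_r$ on $[0,s_0]$ — this is precisely the step performed in the proof of Theorem~\ref{T:theorem B}, where one also checks that higher-index coefficients and the below-diagonal part of the correction do not drag the polygon down. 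Therefore the $(s+1)$-st smallest slope of the $N$-th block equals $N+\bigl(\NP_r(s+1)-\NP_r(s)\bigr)$ for $0\le s\le s_0$. Combining with the first paragraph, for $0\le s\le s_0$ and all $i$ with $s+rt+i\frac{p-1}{2}t<kt$,
\[
a_{s+rt+i\frac{p-1}{2}t}(k)\ =\ N_i+\bigl(\NP_r(s+1)-\NP_r(s)\bigr),
\]
where $N_i-N_{i-1}=\tfrac{p-1}2$; as $N_i$ and $\NP_r$ depend only on $r$, this is independent of $k\equiv 2r+1\pmod{p-1}$, and as $i$ varies it is an arithmetic progression of common difference $\tfrac{p-1}2$.

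The step I expect to be the main obstacle is the interface with the proof of Theorem~\ref{T:theorem B}: one needs the precise form of the correction in \eqref{E:overconvergent = classical} — divisibility by $p^{N+1}$ on the $N$-th block-row — and one needs to know that the Newton polygon of the full operator really is pinned to $N+\NP_r$ on the range $[0,s_0]$ cut out by $\NP_r(s)<\HP_r(s-1)+1$, in particular that coefficients of index exceeding $s_0$ and the below-diagonal blocks do not interfere. This is exactly the quantitative input supplied by the integration pairing $\langle\cdot,\cdot\rangle$, the identity $\langle U_p(f),U_p(g)\rangle=p\langle S_p(f),g\rangle$, and its deformed closed-disk variant from Section~\ref{Section:improve Hodge polygon}; the inequality $\NP_r(s)<\HP_r(s-1)+1$ is calibrated so that the $pB_N$-perturbation contributes nothing to the first $s_0$ slopes. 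A secondary, purely combinatorial, nuisance is the treatment of slopes that are exactly integers — ties between neighbouring blocks — which is harmless since these occur at the same positions for every admissible $k$, and the clause ``whenever it makes sense'' in the statement handles the range restriction.
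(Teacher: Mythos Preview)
Your proposal is correct and follows essentially the same approach as the paper's own proof (Corollary~\ref{C:precise NP computation}): transfer to $D$ via Jacquet--Langlands, use the block-diagonal approximation $\gothU_p^\calB(\kappa)\equiv\gothU_p^{\cl,\infty}$ modulo the error space $\Err_p$ (Proposition~\ref{P:Tl Up overconvergent equiv classical}(2)), invoke the Hodge bound on weight-$2$ slopes coming from the pairing (Corollary~\ref{C:Hodge polygon less eq 1}), and observe that the hypothesis $\NP_r(s)<\HP_r(s-1)+1$ forces the error to be invisible on $[0,s_0]$. The only difference is packaging: where you speak of a block perturbation $A_N\mapsto A_N+pB_N$ and compare $e_s(A_N+pB_N)$ with $e_s(A_N)$, the paper works directly with the coefficients $c_{qnt+rt+s}$ of the full characteristic power series and shows each is congruent, modulo $p^{\lambda_{qnt+rt+s-1}+1}$, to the product of the first $qn+r$ block determinants times $p^{(qn+r)s}$ times the $s$-th coefficient of $\Char(U_p;S_2^D(U;\psi_m\omega^{-2r}))$ --- this is exactly the estimate that absorbs the cross-block contributions you flag as the ``main obstacle''.
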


This is proved in Corollary~\ref{C:precise NP computation}.
\emph{Note that the common difference for the arithmetic progression is $\frac{p-1}{2}$ but not $1$}.
This is due to the periodic appearance of the powers of Teichm\"uller characters in \eqref{E:overconvergent = classical}.
In fact, this (larger) common difference agrees with the computation of Kilford \cite{kilford} and Kilford--McMurdy \cite{kilford-mcmurdy} in the case $m=2$, where the common difference is $2$ when $p=5$ and is $\frac 32$ (which can be further separated into two arithmetic progressions with common difference $3$) when $p=7$.

The power of Theorem~\ref{T:theorem C}
is limited by how close the Hodge polygon is to the Newton polygon.
In particular, as $N$ and $m$ get bigger, the gap between the Newton and Hodge polygons will be inevitably widened, and hence $s_0$ is relatively small compared to $t$.

One remedy we propose is to ``decompose" the space of (overconvergent) modular forms according to the associated residual Galois (pseudo-)representations.\footnote{Galois pseudo-representations are equivalent to semisimple Galois representations. Since we are really using the tame Hecke eigenvalues, we prefer to use the concept of pseudo-representations.}

\begin{thm}
\label{T:theorem D}
Let $\bar \rho_1, \dots, \bar \rho_d$ be the residual Galois pseudo-representations appearing as the pseudo-representations attached to the eigenforms in $S_2^D(U; \psi \omega^{-2r})$ for some $r =0,1, \dots, \frac{p-3}{2}$.
Then we have a natural decomposition of (overconvergent) automorphic forms:
\[
S^{D, \dagger}(U; \kappa) = \bigoplus_{j=1}^d S^{D, \dagger}(U; \kappa)_{\bar \rho_j} \quad \textrm{and}\quad
S_{k+1}^{D}(U; \psi) = \bigoplus_{j=1}^d S_{k+1}^{D}(U; \psi)_{\bar \rho_j}
\]
for all weights $k+1$.
Moreover, Theorem~\ref{T:theorem C} holds for each individual $S_2^D(U, \psi \omega^{-2r})_{\bar \rho_j}$.
\end{thm}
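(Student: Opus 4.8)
The plan is to construct the decomposition by idempotents in the Hecke algebra, then propagate it through the isomorphism \eqref{E:overconvergent = classical}. First I would work with the weight-$2$ spaces $S_2^D(U; \psi\omega^{-2r})$ for $r = 0, 1, \dots, \frac{p-3}{2}$. On each such finite-dimensional space, the prime-to-$p$ Hecke algebra $\TT_r$ (generated by $T_q$, $S_q$ for $q \nmid pN\ell$, say, acting $\calO_E$-linearly on a lattice) is a finite $\calO_E$-algebra, hence semilocal; its maximal ideals correspond bijectively to the distinct residual Galois pseudo-representations $\bar\rho$ appearing, via the standard construction attaching a pseudo-representation to a system of Hecke eigenvalues and reducing mod $\gothm_E$. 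Enlarging $E$ if necessary so that all these pseudo-representations are defined over the residue field, I get a decomposition $\TT_r = \prod_{j} \TT_{r, \bar\rho_j}$ into local factors, and correspondingly orthogonal idempotents $e_{\bar\rho_j} \in \TT_r$ cutting out $S_2^D(U; \psi\omega^{-2r})_{\bar\rho_j}$. The list $\bar\rho_1, \dots, \bar\rho_d$ in the statement is the union over all $r$ of the residual pseudo-representations so obtained. Crucially, since $U_p$ commutes with the prime-to-$p$ Hecke operators, the idempotents $e_{\bar\rho_j}$ commute with $U_p$ as well, so the Newton and Hodge polygon analysis of Theorem~\ref{T:theorem C} (which only used the structure of the $U_p$-action together with the pairing $\langle\cdot,\cdot\rangle$, which is itself Hecke-equivariant) applies verbatim on each summand $S_2^D(U;\psi\omega^{-2r})_{\bar\rho_j}$.

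Next I would transport this to the overconvergent space. The right-hand side of \eqref{E:overconvergent = classical} is a completed direct sum of twists $S_2^D(U;\psi\omega^{-2n})\otimes(\omega^n\circ\det)$; the twist by $\omega^n\circ\det$ changes the character but not the prime-to-$p$ Hecke action in a way that affects residual pseudo-representations beyond a determinant twist by a power of $\omega$, which I can absorb into the bookkeeping (indeed $n$ runs through a fixed residue $2r+1 \bmod p-1$ class-by-class, matching the indexing by $r$). So the same family of idempotents $\{e_{\bar\rho_j}\}$ acts on the completed direct sum, compatibly with the "approximate" $\bigoplus (p^n U_p)$-action; taking the limit/completion, the idempotents extend continuously to $S^{D,\dagger}(U;\kappa)$ — here I would invoke that the Hecke action on $S^{D,\dagger}(U;\kappa)$ is by continuous operators and that idempotents lift through the identification of \eqref{E:overconvergent = classical} because they are locally constant in $n$. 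This yields $S^{D,\dagger}(U;\kappa) = \bigoplus_j S^{D,\dagger}(U;\kappa)_{\bar\rho_j}$, and specializing $\kappa$ to the classical weight $x^k\psi$ gives the decomposition of $S_{k+1}^D(U;\psi)$ (using that classical forms sit inside overconvergent ones compatibly with the Hecke action, by Coleman's classicality as adapted to the quaternionic setting by Buzzard \cite{buzzard}).

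Finally, for the last sentence — that Theorem~\ref{T:theorem C} holds for each $S_2^D(U;\psi\omega^{-2r})_{\bar\rho_j}$ — I would simply re-run the proof of Theorem~\ref{T:theorem C} with all spaces replaced by their $\bar\rho_j$-components. The two ingredients there are: (i) the isomorphism \eqref{E:overconvergent = classical}, which now holds $\bar\rho_j$-componentwise because the idempotent is compatible with it as just explained; and (ii) the bound that the Hodge slopes of $U_p$ on each weight-$2$ space lie in $[0,1]$, proved in Section~\ref{Section:improve Hodge polygon} via the pairing $\langle U_p f, U_p g\rangle = p\langle S_p f, g\rangle$ — and this pairing restricts to a perfect pairing $S_2^D(U;\psi)_{\bar\rho_j} \times S_2^D(U;\psi^{-1})_{\bar\rho_j^\vee} \to E$ because $e_{\bar\rho_j}$ is adjoint (up to the evident dualization $\bar\rho_j \mapsto \bar\rho_j^\vee$ on residual pseudo-representations, which is a bijection on the relevant index set) to $e_{\bar\rho_j^\vee}$ under $\langle\cdot,\cdot\rangle$, the Hecke operators being self-adjoint up to the standard involution.

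The main obstacle I anticipate is \textbf{the continuity/compatibility of the idempotents with the completion and with the "approximate" $U_p$-action}. Concretely: \eqref{E:overconvergent = classical} is only an isometry of Banach modules in which the $U_p$-action on the left is identified with $\bigoplus_n(p^n U_p)$ only up to a correction term that is small but nonzero; one must check that the prime-to-$p$ Hecke operators (hence the idempotents) are respected \emph{exactly}, or at least that the correction term commutes with them, so that the orthogonal decomposition of the left-hand side is genuine and not merely approximate. I expect this to follow because the correction term arises from the weight-character interpolation at $p$ and is built from operators at $p$, which commute with the prime-to-$p$ Hecke algebra by construction; but making this precise — tracking exactly where the $\omega^n\circ\det$ twists interact with the residual pseudo-representation and ensuring the idempotents are uniformly defined across all $n$ in a fixed congruence class — is the technical heart of the argument. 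A secondary, more routine point is enlarging $E$ and verifying that the decomposition is independent of such enlargement.
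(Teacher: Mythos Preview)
Your overall plan matches the paper's, but there is a genuine gap at precisely the step you flag as the main obstacle, and your proposed resolution of it is wrong.

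You expect the identification \eqref{E:overconvergent = classical} either to intertwine the tame Hecke actions exactly, or that the discrepancy ``commutes'' with tame Hecke operators because it is ``built from operators at $p$.'' Neither holds. The identification $\Phi$ of \eqref{E:total space} is just a Banach-space isometry matching coefficient spaces; the difference $\gothT_l^\calB(\kappa)-\gothT_l^{\cl,\infty}$ (lying in the error space $\Err$ of Proposition~\ref{P:Tl Up overconvergent equiv classical}(1)) is not an operator at $p$ and has no reason to commute with other $T_{l'}$'s. Transporting the classical block-diagonal idempotent $\widetilde\gothP_{\bar\rho}^{\cl,\infty}$ through $\Phi$ therefore does \emph{not} give an idempotent commuting with the genuine $U_p$-action on $S^{D,\dagger}_\calB(U;\kappa)$; the Caution after Proposition~\ref{P:decomposition} makes exactly this point.

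The paper's fix, which you do not supply, is to construct the idempotent directly on the overconvergent side. One writes down an explicit polynomial $P_{\bar\rho}$ in finitely many tame Hecke operators (Lagrange-interpolation style, separating $\bar\rho$ from the other residual pseudo-representations at chosen primes $l_{\bar\rho,\bar\rho'}$), lets it act on $S^{D,\dagger}_\calB(U;\kappa)$ via the genuine tame Hecke action, observes that by Proposition~\ref{P:Tl Up overconvergent equiv classical}(1) this action satisfies $(P_{\bar\rho})^2\equiv P_{\bar\rho}\pmod\varpi$, and then takes $\widetilde\gothP_{\bar\rho}^\calB(\kappa):=\lim_n (P_{\bar\rho})^{p^n}$ to obtain a true idempotent (Proposition~\ref{P:decomposition}). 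This idempotent commutes with $U_p$ because $P_{\bar\rho}$ does, yielding the decomposition. It differs from $\widetilde\gothP_{\bar\rho}^{\cl,\infty}$ by something in $\Err$, not by zero; one then constructs an explicit model $V(\bar\rho)^\infty_A$ for the $\bar\rho$-summand (Proposition~\ref{P:identification of V(pi) and PS}) on which the conjugated $U_p$-action lies in $\gothU_p^{\bar\rho,\infty}+\Err_{\bar\rho,p}$, and only with this error control can the slope arguments of Theorem~\ref{T:sharp Hodge bound} and Corollary~\ref{C:precise NP computation} be rerun componentwise (Theorem~\ref{T:main theorem each residual}). Your sketch of ingredient (ii) via the pairing is fine in spirit, but the paper bypasses it by noting that the integral decomposition $S_2^D(U;\psi_m\omega^{-2r};\calO)=\bigoplus_{\bar\rho}V(\bar\rho;\calO)_r$ already respects Hodge slopes (Subsection~\ref{S:Hodge v.s. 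Newton}(6)).
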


This is proved in Theorem~\ref{T:main theorem each residual}.  The idea behind this theorem is that the isomorphism \eqref{E:overconvergent = classical} is also approximately equivariant for the tame Hecke actions.  One can certainly decompose the right hand side of  \eqref{E:overconvergent = classical} according to the reductions of the associated Galois (pseudo-)representations; the isomorphism \eqref{E:overconvergent = classical} allows us, to some extend, transfer the decomposition to the space of overconvergent automorphic forms.  The error terms can be killed by taking  the limit of repeated $p$-powers of the approximate projectors on the space of overconvergent automorphic forms.

We believe that the decomposition by Galois pseudo-representations has its own interest; for example, it gives a natural decomposition of the eigencurve according to the residual Galois pseudo-representations.
Our decomposition is given in a reasonably explicit way on the Banach space of overconvergent automorphic forms and we have a good ``model" of each factor.
So the decomposition of the eigencurve over disks of radius $p^{-1}$ centered around $x \psi(x)$ applies to the piece corresponding to each Galois pseudo-representation.

\subsection{Relation with later works}

Recently, the first two authors and R. Liu \cite{liu-wan-xiao} proved many cases of Conjecture~\ref{Conj:CM-conj} of Coleman--Mazur and Buzzard--Kilford. The method is very similar to this paper, but made use of a difference basis for automorphic forms.

\subsection{Structure of the paper}
\label{S:structure of paper}

We first briefly recall the construction of eigencurves in Section~\ref{Section:CM eigencurve} as well as the conjecture of Coleman--Mazur and Buzzard--Kilford.
Section~\ref{Section:automorphic forms} sets up basic notations for classical and overconvergent automorphic forms for a definite quaternion algebra.  
Section~\ref{Section:computation of Up} gives the most fundamental computation of the infinite matrix for the $U_p$-action on the space of overconvergent automorphic forms.
In particular, Theorem~\ref{T:theorem A} is proved here.
The theoretical computation is complemented by a concrete example which we present in Section~\ref{Section:explicit example}; this was previously studied by Jacobs \cite{jacobs} who relies heavily on computer computation, but made much more accessible here as a by-hand computation.  We hope this explicit example can inspire the readers to seek for new ideas.
After this, we study the Atkin--Lehner involution in Section~\ref{Section:improve Hodge polygon} and prove Theorems~\ref{T:theorem B} and \ref{T:theorem C} at the end of the section.
Section~\ref{Section:separation residue} is devoted to separating the eigencurve according to residual Galois pseudo-representations.
Theorems~\ref{T:theorem D} is proved at the end of Section \ref{Section:separation residue}.

\subsection*{Acknowledgments}
We are grateful to Frank Calegari, Matthew Emerton, Chan-Ho Kim, and Xinyi Yuan for many useful discussions. We thank the anonymous referee for carefully reading the paper and for suggestive comments.
We thank Chris Davis and Hui June Zhu for their interests.
We thank {\tt Sage notebook} and {\tt lmfdb.org} for providing numerical input in the course of this research.
The first author is partially supported by Simons Fellowship.
The second author is  partially supported by Simons Collaboration Grant \#278433, NSF Grant DMS--1502147, and CORCL research grant from University of California, Irvine.
The third author is supported by Beijing outstanding talent training program (\#2014000020124G140). And the third author would like to thank University of California, Irvine for the hospitality during his visit.

\subsection*{Unconventional use of notations}
We list a few unconventional use of notations.
\begin{itemize}
\item
The conductor of a trivial character of $\ZZ_p^\times$ is $p$ as opposed to $1$.
\item
We use $k+1$, as opposed to $k$, for the weight of modular forms.
Related to this, the right action appearing in the definition of automorphic forms on definite quaternion algebra uses a slightly different normalization; see \eqref{E:right action}.
\item
Although the Hecke actions seem to come from certain right actions on the Tate algebras, we still view them as left actions.  Therefore, we exclusively work with column vectors.  We will try to clarify this in the context (e.g. Proposition~\ref{P:generating series}).
\item
All row and column indices of a matrix start with $0$ as opposed to $1$; this will be extremely useful when considering infinite matrices later.
\end{itemize}

\section{Coleman--Mazur eigencurves}
\label{Section:CM eigencurve}

\subsection{Weight space}
We fix a prime number $p$.
We write $\Gamma=\ZZ_p^\times$ as $\Delta \times \Gamma_0$, where $\Gamma_0 = (1+2p\ZZ_p)^\times\cong \ZZ_p$ (identified via the map $x \mapsto \frac 1{2p}\log(x) = \frac 1{2p}\big( (x-1) - \frac{(x-1)^2}2+ \cdots \big)$) and $\Delta = (\Zp/2p\Zp)^\times$ is isomorphic to $\ZZ/(p-1)\ZZ$ if $p\geq 3$,  and $\ZZ/2\ZZ$ if $p=2$.
We choose the topological generator $\gamma_0$ of $ \Gamma_0$ to be the element $\exp(2p) \in\Gamma_0 \subseteq \Zp^\times$.

We use $\Lambda = \Zp\llbracket \Gamma\rrbracket$ and $\Lambda_0
 = \Zp\llbracket\Gamma_0\rrbracket$ to denote the Iwasawa algebras.
In particular, we have $\Lambda \cong \Lambda _0 \otimes_{\Zp} \Zp[\Delta]$.  For an element $\gamma \in \Gamma$, we use $[\gamma]$ to denote its image in the Iwasawa algebra $\Lambda$.
The chosen $\gamma_0$ defines an isomorphism $\ZZ_p\llbracket T\rrbracket \simeq \Lambda _0 $ given by $T \mapsto [\gamma_0] -1$.

The weight space is defined to be $\calW: = \Max(\Lambda[\frac 1p])$, the rigid analytic space associated to the formal scheme $\Spf(\Lambda)$; it is a disjoint union of $\#\Delta$ copies of the open unit disk.  
The natural projection
\[
\calW \cong \Max(\Lambda_0 \otimes_{\Zp} \Qp[\Delta]) \to \Max(\Lambda_0[\tfrac1p]) \simeq \Max(\Zp\llbracket T\rrbracket[\tfrac 1p])
\]
gives each point on $\calW$ a \emph{$T$-coordinate}.

The weight space $\calW$ may be viewed as the universal space for continuous characters of $\Gamma$.  More precisely,  a continuous character $\kappa:\Gamma \to \calO_{\CC_p}^\times$ gives rise to a continuous homomorphism $\kappa: \Lambda = \Zp\llbracket \Gamma\rrbracket \to \calO_{\CC_p}$ and hence defines a point, still denoted by $\kappa$, on the weight space $\calW$.
The $T$-coordinate of the point $\kappa$ is
$
T_\kappa = \kappa(\gamma_0) -1.
$
We point out that, the $T$-coordinate of a point of $\calW$ depends on the choice of the topological generator $\gamma_0$, but its $p$-adic valuation does not.

\begin{example}
\label{Ex: characters on weight space}
For $k \in \ZZ$, the character $x^k: \Gamma \to \ZZ_p^\times$ sending $a$ to $a^k$ has $T$-coordinate $T_{x^k} = \exp(2kp) - 1$.  We observe that $|\exp(2kp)-1| = p^{-v_p(2kp)}$; in other words, these types of points are very closed to the centers of the weight disks.

Let $\psi_m: \Gamma \to (\Zp / p^{m} \Zp)^\times \to \calO_{\CC_p}^\times$ denote a finite continuous character which does not factor through smaller \emph{positive} $m$ ($m \geq 2$ if $p=2$). We say that $\psi_m$ has \emph{conductor $p^m$}, ignoring the prime-to-$p$ part of the conductor.
In particular, a trivial character has conductor $p$ (or $4$ if $p=2$); so $m=1$ (or $m=2$ if $p=2$).
When $m \geq 2$ and $p>2$, $\psi_m(\gamma_0)$ is a primitive $p^{m-1}$-st root of unity $\zeta_{p^{m-1}}$.  Thus, the point $x^k \psi_m$ has $T$-coordinate $\zeta_{p^{m-1}}\exp(2pk) -1$, which has norm $ p^{-1/p^{m-2}(p-1)}$ (independent of $k$).
So these points tend towards the boundary of the weight disks as $m$ increases; \emph{but stay in the same ``rim" as $k$ varies, and accumulate as $k$ becomes more congruent modulo powers of $p$.}

We call characters $x^k\psi_m$ with $k \geq 1$ \emph{classical characters}. (Our weight will always be $k+1$ from now on.)

We use $\omega: \Delta \to \ZZ_p^\times$ to denote the Teichm\"uller character.  We use $\langle\cdot \rangle: \Gamma\to \ZZ_p^\times $ to denote the character $x\omega^{-1}$.
\end{example}

\subsection{Coleman--Mazur eigencurve}
\label{S:CM eigencurve}
Instead of working with the usual eigencurves, we shall work with the so-called ``spectral curves"; the main Conjecture~\ref{Conj:CM-conj} is, for a large part, equivalent for these two curves.

We first recall the definition of spectral curves; for details, we refer to \cite[Section~2]{buzzard}.
Suppose that we are given an affinoid algebra $A$\footnote{Typically, $\Max(A)$ is an affinoid subdomain of $\calW$.} over $\QQ_p$  and a Banach $A$-module $S$ which satisfies Buzzard's property (Pr) (see \cite[after Lemma 2.10]{buzzard}), that is a Banach $A$-module isomorphic to a direct summand of a Banach $A$-module $P$ which admits a countable orthonormal basis $(e_i)_{i \in \NN}$.
Moreover, suppose that we are given a \emph{nuclear} operator $U_p$ on $S$, that is, the uniform limit of a sequence of continuous $A$-linear operators on $S$ whose images are finite $A$-modules.
Then we can extend the action of $U_p$ to the ambient space $P$ by taking the zero action on other direct summands of $P$.
Write $U_p$ as an infinite matrix $M$, respect to the basis $(e_i)$.
Then the \emph{characteristic power series} of $U_p$ acting on $S$
\[
\Char(U_p; S): = \det (I - XM) = 1+ c_1 X+ c_2 X^2 + \cdots \in A\llbracket X \rrbracket
\]
converges and is independent of the choices of the ambient space $P$ and its basis $(e_i)$.
Moreover, we have $\lim_{n\to \infty} |c_n| r^n =0$ for any $r \in \RR^+$.
Consequently, it makes sense to talk about the zero locus of the characteristic power series $\Char(U_p;S)$ in $\Max(A) \times \GG_{m, \rig}$, where $X$ is the coordinate of the second factor.
We denote this zero locus by $\Spc :=\Spc(U_p;S)$; it is called the \emph{spectral variety} associated to the Banach module $S$ and the $U_p$-operator.
The natural projection $\wt: \Spc\to \Max(A)$ is called the \emph{weight map}; the map $a_p: \Spc \to \GG_{m,\rig}\xrightarrow{x \to x^{-1}}\GG_{m, \rig}$ given by the composite of the other natural projection with an inverse map is called the  \emph{slope map}.
\[
\xymatrix{
\Spc \ar[d]^{\wt} \ar[r]^-{a_p} &\GG_{m,\rig}\\ \Max(A).
}
\]
The weight map is known to be locally finite.
For each closed point $z \in\Spc$, we use $|\wt(z)|$ to denote the absolute value of the $T$-coordinate of $z$ and $|a_p(z)|$ to denote the absolute value of the corresponding point with respect to the natural coordinate on $\GG_{m, \rig}$.

In the case of elliptic modular forms (with level $\Gamma_0(p)$), Coleman and Mazur \cite{coleman-mazur} constructed, for each affinoid subdomain $A$ of the weight space $\calW$, a Banach module $M$ consisting of overconvergent cuspidal modular forms of weight in $A$ and of a fixed convergence radius; it carries a natural action of the $U_p$-operator.
This construction was subsequently generalized by Buzzard \cite{buzzard} to allow arbitrary tame level on the modular curve.
Using the construction of the previous paragraph, one can define the spectral curve over $\Max(A)$, which patches together over $\calW$ as the subdomain $\Max(A)$ varies.
We do not recall the precise definition here, but refer to \cite{buzzard} for details.  However, we shall later encounter a slightly different situation working with definite quaternion algebras. Detailed construction of the corresponding Banach module will be given then.

\subsection{The eigencurve near the boundary of the weight space}
Recall that weight space $\calW$ has a natural coordinate $T$.
For $r <1$, we use $\calW^{\geq r}$ to denote the sub-annulus of $\calW$ where $r \leq |T| <1$, called the \emph{rim} of the weight space (after Mazur).
We are mostly interested in the situation when $r\to 1^-$.
As computed in Example~\ref{Ex: characters on weight space}, all powers $x^k$ of the cyclotomic character are \emph{not} in the rim of the weight space as soon as $r > p^{-1}$.

We put $\Spc^{\geq r}: = \mathrm{wt}^{-1}(\calW^{\geq r}).$

The following question was asked by Coleman and Mazur \cite{coleman-mazur}, and later elaborated by Buzzard and Kilford \cite{buzzard-kilford}.\footnote{In the recent preprint of the first two authors and R. Liu \cite{liu-wan-xiao}, we proved many cases of this conjecture.}
\begin{conjecture}
\label{Conj:CM-conj}
When $r$ is sufficiently close to $1$, the following statements hold.
\begin{enumerate}
\item
The space $\Spc^{\geq r}$ is a disjoint union of (countably infinitely many) connected components $X_1, X_2, \dots$ such that the weight map $\wt: X_n \to \calW^{\geq r}$ is finite and flat for each $n$.
\item
There exist nonnegative rational numbers $\lambda_1, \lambda_2, \dots \in \QQ$ in non-decreasing order and approaching to infinity such that, for each $i$ and each point $z \in X_n$, we have
\[
|a_p(z)| = |\mathrm{wt}(z)|^{(p-1)\lambda_n}.
\]
\item
The sequence $\lambda_1, \lambda_2, \dots$ is a disjoint union of finitely many arithmetic progressions, counted with multiplicity (at least when the indices are large enough).
\end{enumerate}
\end{conjecture}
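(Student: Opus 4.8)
The full conjecture is presently out of reach, so the realistic plan is to establish Conjecture~\ref{Conj:CM-conj} over the smaller disks $\calW(x\psi, p^{-1})$ of radius $p^{-1}$ that sit inside the rim, and to locate precisely where the obstruction to the whole annulus lives. The first move is to leave the modular curve entirely: assuming $\ell \| N$ and restricting to the $\ell$-new part, Jacquet--Langlands identifies $S_{k+1}(\Gamma_0(p^mN);\psi)^{\ell\new}$ and its overconvergent analogue with (overconvergent) automorphic forms $S^{D,\dagger}(U;\kappa)$ on a definite quaternion algebra $D$ ramified at $\ell$ and $\infty$. The point is that $D^\times\backslash D^\times(\AAA)/U$ is a finite set, so the relevant Banach modules carry an honest integral orthonormal basis; this clean combinatorial structure is what will replace the crude, quadratically-off estimate of \cite{wan} by a sharp one and, in particular, prove the $k$-uniform lower bound of Theorem~\ref{T:theorem A}.

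The engine is the isomorphism \eqref{E:overconvergent = classical}, which for $\kappa \in \calW(x\psi, p^{-1})$ expresses $S^{D,\dagger}(U;\kappa)$ as a completed direct sum $\widehat\bigoplus_{n\ge 0} S_2^D(U;\psi\omega^{-2n})\otimes(\omega^n\circ\det)$ in such a way that $U_p$ on the left is a block-lower-triangular perturbation of $\bigoplus_{n\ge 0}(p^n\cdot U_p)$ on the right. I would prove this by writing $U_p$ in a basis adapted to the $\ZZ_p^\times$-weight expansion of $\kappa$, reading off the diagonal blocks as the weight-$2$ operators and bounding the subdiagonal entries by powers of $p$ that grow with the block index; the hypothesis $|T_\kappa| \le p^{-1}$ is exactly what forces ``subdiagonal'' to mean ``strictly below the $p^n$-graded piece.'' For this to become a Newton-polygon statement one needs the Hodge polygon of $U_p$ on each weight-$2$ block $S_2^D(U;\psi\omega^{-2n})$ to have all slopes in $[0,1]$, and this I would extract from the perfect pairing $\langle\cdot,\cdot\rangle\colon S_2^D(U;\psi)\times S_2^D(U;\psi^{-1})\to E$ together with the identity $\langle U_p f, U_p g\rangle = p\langle S_p f, g\rangle$, where $S_p$ is the unramified central character operator at $p$: this says the adjoint of $U_p$ is $p S_p U_p^{-1}$, which constrains the valuations of all the minors into the desired range, and a deformed version of the pairing upgrades the bound from the open disk $|T|<p^{-1}$ to the closed one.

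Combining the two inputs, over $\calW(x\psi,p^{-1})$ the Newton polygon of $U_p$ on $S^{D,\dagger}(U;\kappa)$ is squeezed onto a fixed reference polygon at the abscissae $0, t, 2t, \dots$, and these turn out to be vertices; the usual Newton-polygon patching then splits $\Spc$ over this disk into components $X_n$, each finite flat over the disk and with $|a_p|$ a fixed power of $|\wt|$ on each --- the disk-localized form of parts (1)--(2), i.e.\ Theorem~\ref{T:theorem B}. For part (3), tracking that the Teichm\"uller twist $\omega^n\circ\det$ in \eqref{E:overconvergent = classical} is periodic in $n$ with period $\tfrac{p-1}{2}$, and coupling this with the Newton polygons of the finitely many weight-$2$ spaces $S_2^D(U;\psi\omega^{-2r})$, $0\le r\le\tfrac{p-3}{2}$, one reads off that --- within the range where the Newton--Hodge gap is under control --- the slope sequence breaks into arithmetic progressions of common difference $\tfrac{p-1}{2}$ (Theorem~\ref{T:theorem C}); the decomposition by residual pseudo-representations, which shrinks that gap block by block, then extends this in Theorem~\ref{T:theorem D}.

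The main obstacle to the full Conjecture~\ref{Conj:CM-conj} is that these $p^{-1}$-disks do not exhaust the rim: for every $m$ the rim contains the much larger annulus of radius $p^{-1/p^{m-2}(p-1)}$ through $x\psi_m$, and once $|T_\kappa| > p^{-1}$ the subdiagonal bound of the second paragraph collapses --- the perturbation in \eqref{E:overconvergent = classical} is no longer dominated by the $p^n$-grading and the argument yields nothing. Closing this gap seems to require either an orthonormal basis of $S^{D,\dagger}(U;\kappa)$ adapted to $\kappa$ in which $U_p$ stays ``almost graded'' for all $|T_\kappa|$ merely bounded away from $1$, or an altogether different handle on the $U_p$-matrix near the boundary (a ghost-series-type input, say). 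One must also dispose separately of the low-conductor cases $m=1,2,3$ and of the non-$\ell$-new part of the eigencurve, the latter presumably by a level-raising induction that reduces it to definite quaternion algebras at auxiliary primes.
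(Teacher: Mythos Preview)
Your proposal correctly recognizes that Conjecture~\ref{Conj:CM-conj} is not proved in the paper, and your outline of the partial progress---Jacquet--Langlands transfer, the approximate block decomposition \eqref{E:overconvergent = classical}, the pairing $\langle U_p f, U_p g\rangle = p\langle S_p f, g\rangle$ controlling the Hodge slopes of the weight-$2$ blocks, the deformed pairing to reach the closed disk, and the residual-pseudo-representation refinement---is precisely the paper's own strategy.

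One overstatement to flag: you say the argument yields components $X_n$ on which $|a_p|$ is a \emph{fixed} power of $|\wt|$, i.e.\ the disk-localized form of part~(2). The paper does not obtain this. Theorem~\ref{T:improved main theorem} only produces components $X_{(n,n+1]}$ on which the slope lies in the interval $(n,n+1]$; the Newton polygon is pinned down exactly only at the abscissae $kt$, and between them one merely has the Hodge bound below and the straight line through consecutive pinned points above. So what is established over $\calW(x\psi_m,p^{-1})$ is a coarsened version of part~(1), together with part~(3) for the portion of the slope sequence governed by Corollary~\ref{C:precise NP computation}, but not part~(2). (Relatedly, your description of the perturbation as ``block-lower-triangular'' is not quite right: the error space $\Err_p$ in \eqref{E:Errp} has nonzero upper-triangular blocks as well; what matters is the precise divisibility pattern, not triangularity.)
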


Clearly Conjecture~\ref{Conj:CM-conj} implies Conjecture~\ref{Conj:weak-buzzard-kilford} by specializing to classical weights using Coleman's classicality result \cite{coleman1,coleman2}.

\begin{remark}
\label{R:remark after the conjecture}
Let us give a few evidences and remarks on Conjecture~\ref{Conj:CM-conj} (as well as Conjecture~\ref{Conj:weak-buzzard-kilford}).
\begin{enumerate}

\item
The novelty of our formulation lies in emphasizing statement (3) of Conjecture~\ref{Conj:CM-conj} as part of the general picture.
In fact, the aim of this paper is to give strong evidence to support this expectation; see in particular, Corollary~\ref{C:precise NP computation} and Theorem~\ref{T:main theorem each residual}(3).

\item Similar properties near the center of the weight space are expected to be false; we refer to \cite{buzzard-calegari2, buzzard-slope, clay, loeffler} for more discussions.\footnote{Very recently, Bergdall and Pollack \cite{bergdall-pollack} gave an interesting conjecture regarding the slopes of eigencurve at the center of the weight space.}  But see also Remarks~\ref{R:heuristic gouvea}.

\item
One can reformulate this conjecture for eigencurves instead of spectral curves; the two statements would be essentially equivalent.

\item When $p=2,3$ and the modular curve is taken to be $X_0(p)$, Conjecture~\ref{Conj:CM-conj} is proved using direct computations by Buzzard--Kilford \cite{buzzard-kilford} and Roe \cite{roe}, extending the thesis of Emerton \cite{emerton}.

\item For $p=5,7$, the weaker version Conjecture~\ref{Conj:weak-buzzard-kilford} was verified in some cases by Kilford and McMurty \cite{kilford, kilford-mcmurdy}.

\item In an analogous situation where the eigencurve associated to Artin--Schreier--Witt tower of curves is considered, the analogue of Conjecture~\ref{Conj:CM-conj}, in fact over the entire weight space,\footnote{The fact that the analogous statements hold over the entire weight space means that the situation is largely simplified; the method will probably not translate directly to the Coleman--Mazur eigencurve case.} is proved by  Davis and the first two authors \cite{davis-wan-xiao}.
Our argument in Section~\ref{Section:improve Hodge polygon} shares some similarities with this approach and is in part inspired by it.
\end{enumerate}
\end{remark}

\begin{remark}
We give our most optimistic expectation of the numerics in Conjecture~\ref{Conj:CM-conj}.
Suppose $ p \geq 3$ for simplicity. First, we expect Conjecture~\ref{Conj:CM-conj} to hold for $r = p^{-1/(p-1)}$ (i.e. the radius for finite characters of conductor $p^2$).\footnote{It is possible that Conjecture~\ref{Conj:CM-conj} holds for even smaller $r$, e.g. $r < p^{-1}$; but we do not have strong evidence either supporting or against this.}
Moreover, we hope to make a guess about the sequence $\lambda_1, \lambda_2, \dots $ in Conjecture~\ref{Conj:CM-conj}.
Assume that the tame level structure is \emph{neat}.
Fix a connected component of the weight disk and fix a finite character $\psi_2$ of conductor $p^2$ so that the character $x \psi_2$ lies in that weight disk.

For $i=0, \dots, \frac{p-3}2$,
consider the action of $U_p$ on the space of cusp forms $S_2(p^2; \psi_2 \omega^{-2i})$ whose tame level is as given and the level at $p$ is $\Gamma_0(p^2)$ with nebentypus character $\psi_2\omega^{-2i}$.
The dimension of such space is denoted by $t$  (which does not depend on $i$).  Let $\alpha_1^{(i)}, \dots, \alpha_t^{(i)}$ denote the $p$-adic valuations of the corresponding $U_p$-eigenvalues, counted with multiplicity.

Let $d$ denote the number of cusps of the modular curve with only the tame level, or equivalently the dimension of the weight $2$ Eisenstein series for the tame level.

Then the sequence $\lambda_1,\lambda_2, \dots$ is expected to be the union (rearranged into the non-decreasing order) of exactly the following list of numbers:
\begin{itemize}
\item
the
numbers $1, 2, 3, \dots$ with multiplicity $d$, and
\item
for $i=0, \dots, \frac{p-1}{2}$ and $r = 1, \dots, t$, the numbers
\[
\alpha_r^{(i)} + i,\ \alpha_r^{(i)}+i + \tfrac{p-1}{2},\ \alpha_r^{(i)}+i + (p-1),\ \dots.
\]
\end{itemize}
The former part should be considered as ``contributions from the Eisenstein series" although the overconvergent modular forms are cuspidal; and the latter part is the ``contributions from the cuspidal part", which is a union of arithmetic progressions with common difference $\frac{p-1}{2}$.  (The number $\frac{p-1}2$ comes from the cyclic repetition of powers of the Teichm\"uller character.)
Our guess is motivated by the main theorems of this paper and some computation of Kilford and McMurty \cite{kilford, kilford-mcmurdy}.
\end{remark}

\section{Automorphic forms for a definite quaternion algebra}
\label{Section:automorphic forms}
One of the major technical difficulties, among others, is the poor understanding of the geometry of the modular curves, in explicit coordinates.
To bypass this difficulty, we consider the eigencurve for a definite quaternion algebra; then a $p$-adic family version of Jacquet--Langlands correspondence \cite{chenevier} allows us to recover a big part of Conjecture~\ref{Conj:CM-conj}, from the corresponding statements for the quaternion algebra.
We now recall the definition of the quaternionic eigencurves following \cite{buzzard2,buzzard}.

\subsection{Setup}
\label{S:setup for D}
Let $\AAA_f$ denote the finite adeles of $\QQ$ and $\AAA_f^{(p)}$ its prime-to-$p$ components.
Let $D$ be a definite quaternion algebra over $\QQ$ which splits at $p$; in other words,
$D \otimes_\QQ \RR$ is isomorphic to the Hamiltonian quaternion and
$D \otimes_{\QQ} \Qp \simeq \rmM_2(\Qp)$.
Put $D_f : = D \otimes_\QQ \AAA_f$.
Let $\calS$ be a finite set of primes including $p$ and all primes at which $D$ ramifies.
For each prime $l \neq p$, we fix an open compact subgroup of $U_l$ of  $(D \otimes_\QQ \QQ_l)^\times$.
For $l \notin \calS$, we fix an isomorphism $D \otimes_\QQ \QQ_l \simeq \rmM_2(\QQ_l)$ and require that $U_l \simeq \GL_2(\ZZ_l)$ under this identification.
We fix a positive integer $m \in \NN$ and consider the Iwahori subgroup
\[
U_0(p^m) =
\begin{pmatrix}
\ZZ_p^\times & \Zp \\ p^m \Zp & \Zp^\times
\end{pmatrix} \subset \GL_2(\Qp) \simeq (D \otimes_\QQ \QQ_p)^\times.
\]
We will later need the monoid
\[
\Sigma_0(p^m): =\Big\{\gamma =\big(
\begin{smallmatrix}
a&b\\c&d
\end{smallmatrix} \big) \in \rmM_2(\ZZ_p) \; \Big| \;
p^m|c, \ p\nmid d,\, \det(\gamma) \neq 0 \Big\}.
\]
Finally, we write $U = \prod_{l \neq p} U_l \times U_0(p^m)$ for the product, as an open compact subgroup of $D_f^\times$.
We occasionally use $U_1$ to denote $\prod_{l \neq p} U_l \times \big(\begin{smallmatrix} \Zp^\times & \Zp\\ p^m\ZZ_p & 1+p^m \Zp \end{smallmatrix}\big)$.

We further assume that $U$ is taken sufficiently small so that
(see \cite[Section~4]{buzzard2})
\begin{equation}
\label{E:buzzard condition}
\textrm{for any }x \in D_f^\times, \textrm{ we have }x^{-1} D^\times x \cap U = \{1\}.
\end{equation}

We fix a finite extension $E$ of $\Qp$ as the coefficient field, which we will enlarge as needed in the argument.
Let $\calO$ denote the valuation ring of $E$ and $\varpi$ an uniformizer. Write $\FF = \calO / (\varpi)$ for the residue field.
Let $v(\cdot)$ denote the valuation on $E$ normalized so that $v(p) = 1$.

We write $\calA^\circ := \calO \langle z \rangle$ and $\calA = \calA^\circ[\frac1p]$ for the Tate algebras.

Put $r_m = p^{-1/p^{m-1}(p-1)}$ if $p>2$ and $r_m = 2^{-1/2^{m-2}}$ if $p=2$.
Let $\calW^{< r_m}$ denote the open disks of $\calW$ where the $T$-coordinate has absolute value $< r_m$.
Let $\Max(A)$ be an affinoid space over $\calW^{< r_m}$.  (Typical examples of $\Max(A)$ we consider are either a subdomain or a point.)  Let $\kappa: \Gamma \to A^\times$ denote the universal character.
Then $\kappa$ extends to a continuous character
\begin{align}
\label{E:extend kappa A}
\kappa: (\ZZ_p + p^m \calA^\circ)^\times = \ZZ_p^\times \cdot (1+p^m&\calA^\circ)^\times \longrightarrow (A \widehat \otimes \calA^\circ)^\times
\\
\nonumber
a\cdot x &\longmapsto \kappa(a) \cdot \kappa(\exp(2p))^{(\log x)/{2p}}.
\end{align}
One checks easily that the condition $|\kappa(\exp(2p))-1|< r_m$ ensures the convergence and the independence of the factorization $a\cdot x$. See e.g. \cite[Section~2.1]{pilloni} for a more optimal convergence condition.

\subsection{Overconvergent automorphic forms}
\label{S:quaternionic forms}
Consider the right action of $\Sigma_0(p^m)$ on $A\widehat\otimes\calA$ given by
\begin{equation}
\label{E:right action}
\textrm{for } \gamma = \big(
\begin{smallmatrix}
a&b\\c&d
\end{smallmatrix} \big) \in \Sigma_0(p^m) \textrm{ and }h(z) \in A \widehat \otimes \calA, \quad
(h||_{\kappa}\gamma)(z): = \frac{\kappa(cz+d)}{cz+d} h\big( \frac{az+b}{cz+d}\big).\footnote{Our weight normalization is different from \cite{buzzard2, buzzard, jacobs} and most of the literature by using $cz+d$ in the denominator as opposed to $(cz+d)^2$; we will see a small benefit of our choice later in Proposition~\ref{P:generating series}.}
\end{equation}
Note that it is crucial that $p^m |c$ and $d \in \ZZ_p^\times$ so that $\kappa(cz+d)$ and $(cz+d)^{-1}$ make sense.

We define
the space of \emph{overconvergent automorphic forms of weight $\kappa$ and level $U$} to be
\[
S^{D,\dagger}(U; \kappa): = \Big
\{
\varphi: D^\times_f \to A \widehat \otimes \calA\;\Big|\;
\varphi(\delta gu) = \varphi(g)||_{\kappa}u_p, \textrm{ for any }\delta \in D^\times, g \in D_f^\times, u \in U
\Big\},
\]
where $u_p$ is the $p$-component of $u$.

\begin{example}
\label{Ex:classical weight}
When $\kappa = x^k\psi_{m'}: \Gamma \to \QQ_p(\zeta_{p^{m'-1}})^\times$ is the continuous character considered in Example~\ref{Ex: characters on weight space},
we can take the definition above for $A = E \supset \Qp(\zeta_{p^{m'-1}})$ corresponding to the point $\kappa$ on $\calW$, which lies
in $\calW^{\leq r_m}$ if $m' \leq m$.
In this case, the right action is given by
\begin{equation}
\label{E:chi action}
(h||_\kappa\gamma)(z)= (cz+d)^{k-1} \psi_{m'}(d) h\big(\frac{az+b}{cz+d} \big).
\end{equation}
The space $S^{D,\dagger}(U; \kappa) = S^{D,\dagger}_{k+1}(U;\psi_{m'})$ is the space of \emph{overconvergent automorphic forms of weight $k+1$, nebentypus character $\psi_{m'}$, and level $U$}.

Moreover, when $k \geq 1$ is a positive integer, we observe that the subspace $L_{k-1}$ of $\calA$ consisting of polynomials in $z$ with degree $\leq k-1$ is stable under the action \eqref{E:chi action}; so we can define the space of \emph{classical automorphic forms of weight $k+1$, character $\psi_{m'}$, and level $U$} to be the subspace $S^D_{k+1}(U; \psi_{m'})$ of $S^{D ,\dagger}_{k+1}(U; \psi_{m'})$ consisting of functions $\varphi$ with values in $L_{k-1}$.
In particular, when $k=1$,
\begin{align}
\label{E:S2 classical}
S_2^D(U; \psi_m): = \big\{ \varphi: D^\times_f \to E \, \big| \, \varphi(\delta gu) =  \psi_m(d) & \varphi(g) \textrm{ for any }\delta \in D^\times, g \in D^\times_f,
\\
\nonumber
&\textrm{and } u\in U \textrm{ with }u_p = \big(\begin{smallmatrix}
a&b\\c&d
\end{smallmatrix} \big) \big\}.\footnote{By Jacquet--Langlands, constant function on $D^\times \backslash D^\times_f/U$ corresponds to weight two modular forms.}
\end{align}
We occasionally write $S_2^D(U; \psi_m; \calO)$ for the subspace of functions that take values in $\calO$ (as opposed to $E$).
\end{example}

\subsection{Hecke actions}
\label{S:Hecke action}
The space $S^{D,\dagger}(U; \kappa)$ carries actions of Hecke operators, which preserves the subspace of classical automorphic forms $S^D_{k+1}(U; \psi_m)$ when $\kappa = x^k\psi_m$ is given as in Example~\ref{Ex:classical weight}.

Let $l$ be a prime not in $\calS$; then $U_l \simeq \GL_2(\ZZ_l)$.
We write $U_l \big( \begin{smallmatrix}
l&0\\0&1
\end{smallmatrix}\big) U_l = \coprod_{i=0}^{l} U_l w_i$, with $w_i = \big( \begin{smallmatrix}
l&0\\i&1
\end{smallmatrix}\big)$ for $i =0, \dots, l-1$ and $w_l = \big( \begin{smallmatrix}
1&0\\0&l
\end{smallmatrix}\big)$, viewed as elements in $\GL_2(\QQ_l) \simeq D \otimes_\QQ \QQ_l$.
We define the action of the operator $T_l$ on $S^{D,\dagger}(U; \kappa)$ by
\[
T_l(\varphi) = \sum_{i =0}^l \varphi|_{\kappa}w_i, \quad \textrm{with }(\varphi|_{\kappa}w_i)(g): = \varphi(gw_i^{-1}).\footnote{This looks slightly different from \eqref{E:defn of U_p} below because $||_\kappa w_i$ is trivial as $w_i$ is not in the $p$-component.}
\]

Similarly, we write (note $m \geq 1$)
\begin{equation}
\label{E:Up operator cosets}
U_0(p^m)\big(\begin{smallmatrix}
p&0\\0&1
\end{smallmatrix}\big) U_0(p^m) = \coprod_{i=0}^{p-1}
U_0(p^m) v_i, \quad \textrm{with } v_i = \big(\begin{smallmatrix}
p&0\\ip^{m} &1
\end{smallmatrix}\big).
\end{equation}
Then the action of the operator $U_p$ on $S^{D,\dagger}(U; \kappa)$ is defined to be
\begin{equation}
\label{E:defn of U_p}
U_p(\varphi) = \sum_{i =0}^{p-1} \varphi|_{\kappa}v_i, \quad \textrm{with }(\varphi|_{\kappa}v_i)(g): = \varphi(gv_i^{-1})||_{\kappa}v_i.
\end{equation}
We point out that the definition of $U_p$- and $T_l$-operators do not depend on the choices of the double coset representatives $w_i$ and $v_i$.  But our choices may ease the computation.

These $U_p$- and $T_l$-operators are viewed as acting on the space on the left (although the expression seems to suggest a right action); they are pairwise commutative.

\begin{notation}
If an (overconvergent) automorphic form $\varphi$ is a (generalized) eigenvector for the $U_p$-operator, we call the $p$-adic valuation of its (generalized) $U_p$-eigenvalue the \emph{$U_p$-slope} or simply the \emph{slope} of $\varphi$.
By \emph{$U_p$-slopes} on a space of (overconvergent) automorphic forms, we mean the set of slopes of all generalized $U_p$-eigenforms in this space, counted with multiplicity.
\end{notation}

\subsection{Classicality of automorphic forms}
The relation between the classical and the overconvergent automorphic forms in weight $k+1 \geq 2$ can be summarized by the following exact sequence
\[
0 \to S_{k+1}^D(U; \psi_m) \to S_{k+1}^{D, \dagger}(U; \psi_m) \xrightarrow{(\frac{d}{dz})^k}  S_{1-k}^{D, \dagger}(U; \psi_m) \to 0,
\]
where the first map is the natural embedding and the second map is given by
\[
 \big(  \big(\frac{d}{dz} \big)^k (\varphi) \big) (g) : =
 \big(\frac{d}{dz} \big)^k \big( \varphi(g)\big).
\]
One checks that $ (\frac{d}{dz} )^k \circ U_p= p^k \cdot U_p \circ (\frac{d}{dz} )^k$ (see \cite[\S 7]{buzzard2}).
As a corollary, all $U_p$-eigenforms of $S_{k+1}^{D, \dagger}(U; \psi_m)$ with slope strictly less than $k$  are classical.
It is also well known that the $U_p$-slopes on $S_{k+1}^D(U; \psi_m)$ are always less than or equal to $k$  by the admissibility of the associated Galois representation.
It follows that the $U_p$-slopes on $S_{k+1}^D(U; \psi_m)$ are exactly the smallest $\dim S_{k+1}^D(U; \psi_m)$\footnote{This number can be expressed in a simple way as in Corollary~\ref{C:dimension formula}.} numbers (counted with multiplicity) in the set of $U_p$-slopes on $S_{k+1}^{D, \dagger}(U; \psi_m)$.

\subsection{Jacquet--Langlands correspondence}
\label{S:jacquet langlands}
We recall a very special case of the classical Jacquet--Langlands correspondence, which was used in the introduction. Let $N$ be a positive integer coprime to $p$.
Assume that there exists a prime number $\ell$ such that $\ell ||N$.
Let $D_{\ell\infty}$ denote the definite quaternion algebra over $\QQ$ which ramifies at exactly $\ell$ and $\infty$.
If we take the level structure so that $\calS$ is the set of prime factors of $p\ell N$,  $U_\ell$ is the maximal open compact subgroup of $(D_{\ell\infty}\otimes\QQ_\ell)^\times$, and $U_q =  \big( \begin{smallmatrix}
\ZZ_q^\times & \ZZ_q \\ N\ZZ_q & \ZZ_q^\times
\end{smallmatrix} \big) \subset \GL_2(\QQ_q) \simeq (D_{\ell\infty} \otimes \QQ_q)^\times$ for a prime $q |N$ but $q \neq \ell , p$, then the Jacquet--Langlands correspondence says that there exists an isomorphism of modules of $U_p$- and all $T_q$-operators for $q \nmid Np$:
\begin{equation}
\label{E:Jaquet-Langlands classical}
S_{k+1}(\Gamma_0(p^mN); \psi_m)^{\ell\new} \cong S_{k+1}^{D_{\ell\infty}}(U; \psi_m)
\end{equation}
for all weights $k+1 \geq 2$.
This allows us to translate our results about automorphic forms on definite quaternion algebras to results about modular forms.
One can certainly make variants of this, but we do not further discuss.

\subsection{Eigencurve for $D$}
It is clear that $S^{D, \dagger}(U;\kappa)$ satisfies Buzzard's property (Pr) (see \ref{S:CM eigencurve}), by an argument similar to \cite[\S 10]{buzzard} or imitate Lemma~\ref{L:explicit space of automorphic forms}.
The action of  the $U_p$-operator on $S^{D, \dagger}(U; \kappa)$ is nuclear by
\cite[Lemma~12.2]{buzzard}.
So the construction in Subsection~\ref{S:CM eigencurve} applies with $S = S^{D, \dagger}(U; \kappa)$ to give a spectral curve over $\Max(A)$.
The construction is clearly functorial in $A$ and hence defines a spectral curve $\Spc_D$ over $\calW^{<r_m}$.
As explained in \cite[Section~13]{buzzard}, the construction for different $m$ also glues over small weight disks and hence gives rise to a spectral curve $\Spc_D$ over the entire weight space $\calW$.

The
Jacquet--Langlands correspondence above can be made into $p$-adic families.
By \cite{chenevier}, there is a closed immersion $\Spc_D^\mathrm{red}\hookrightarrow \Spc^\mathrm{red}$, where the superscript means to take the reduced subscheme structure.\footnote{Rigorously speaking, \cite{chenevier} proves the result for eigencurves; but the spectral curves, when taking the reduced scheme structure, are exactly the images of the eigencurves after forgetting the tame Hecke actions.}
Therefore, it is natural to expect that Conjecture~\ref{Conj:CM-conj} holds for $\Spc_D$ in place of $\Spc$.
Conversely, knowing Conjecture~\ref{Conj:CM-conj} for $\Spc_D$, it is quite possible to infer a lot of information regarding $\Spc$ via the comparison \cite{chenevier}.

\section{Explicit computation of the $U_p$-operators}
\label{Section:computation of Up}

We now make the first attempt to prove certain weak version of Conjectures~\ref{Conj:weak-buzzard-kilford} and \ref{Conj:CM-conj}, ending with a proof of Theorem~\ref{T:theorem A}.
To our best knowledge, the only known approach to any form of these conjectures
is via ``brutal force" computation, that is to compute directly the characteristic power series of the $U_p$-operator to the extent that one can determine its slopes.
Our approach is derived from a computation made by Jacobs \cite{jacobs} of the infinite matrix for $U_p$ in terms of concrete numbers.
The novelty of our improvement is to make ``brutal but formal computation" as opposed to using numbers.
We include his example in the next section with some simplification.  It serves as a toy model of our computation presented in this section.

\begin{notation}
\label{N:representatives}
We decompose $D^\times_f$ into (a disjoint union of) double cosets $\coprod_{i=0}^{t-1} D^\times \gamma_i U$, for some elements $\gamma_0,\gamma_1, \dots, \gamma_{t-1} \in D^\times_f$.
By our smallness hypothesis on $U$ in Subsection~\ref{S:setup for D}, the natural map $D^\times \times U \to D^\times \gamma_i U$ for each $i$ sending $(\delta,u)$ to $\delta\gamma_iu$ is bijective.  We say that the double coset decomposition above is \emph{honest}.

Since the  norm map $\Nm: D^\times \to \QQ^\times_{>0}$ is surjective, we may modify  the representatives $\gamma_i$ so that $\Nm(\gamma_i) \in \widehat \ZZ^\times$.  Moreover, since $\Nm(U_0(p^m)) = \ZZ_p^\times$, we can further modify the $p$-component of each $\gamma_i$ so that its norm is $1$.  Finally, using the fact that $(D^\times)^{\Nm=1} $ is dense in $(D\otimes_\QQ \Qp)^{\times, \Nm=1}$, we may assume that the $p$-component of each $\gamma_i$ is trivial, still keeping the property that $\Nm(\gamma_i) \in \widehat \ZZ^\times$.
\end{notation}

Let $\Max(A)$ be an affinoid space over $\calW^{ < r_m}$ and let $\kappa: \Gamma \to A^\times$ be the universal character.

\begin{lemma}
\label{L:explicit space of automorphic forms}
We have an $A$-linear isomorphism of Banach spaces
\[
\xymatrix@R=0pt{
S^{D,\dagger}(U; \kappa) \ar[r]^\cong &
\oplus_{i=0}^{t-1}A \widehat \otimes \calA\\
\varphi \ar@{|->}[r] & \big(\varphi(\gamma_i) \big)_{i = 0, \dots, t-1}.
}
\]
\end{lemma}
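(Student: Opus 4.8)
The plan is to exhibit the isomorphism explicitly and check it is a well-defined $A$-linear isometric bijection. First I would define the map in the stated direction: given an overconvergent form $\varphi \in S^{D,\dagger}(U;\kappa)$, send it to the tuple $(\varphi(\gamma_0), \dots, \varphi(\gamma_{t-1})) \in \oplus_{i=0}^{t-1} A\widehat\otimes\calA$. This is obviously $A$-linear, and it is continuous with operator norm at most $1$ once we equip $S^{D,\dagger}(U;\kappa)$ with the sup norm $\|\varphi\| = \max_g \|\varphi(g)\|$ (which by the transformation property is the same as $\max_i \|\varphi(\gamma_i)\|$, since $u_p$ acts on $A\widehat\otimes\calA$ by an isometry for $u \in U$ — here one uses that $\kappa(cz+d)/(cz+d)$ is a unit power series with unit inverse, so $||_\kappa u_p$ preserves the norm on $\calA^\circ$).

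Next I would construct the inverse. Given an arbitrary tuple $(h_0, \dots, h_{t-1})$ with $h_i \in A\widehat\otimes\calA$, I want to produce a function $\varphi\colon D_f^\times \to A\widehat\otimes\calA$ with $\varphi(\delta g u) = \varphi(g)||_\kappa u_p$ and $\varphi(\gamma_i) = h_i$. Because the double coset decomposition $D_f^\times = \coprod_{i=0}^{t-1} D^\times \gamma_i U$ is \emph{honest} (Notation~\ref{N:representatives}), every $g \in D_f^\times$ can be written \emph{uniquely} as $g = \delta \gamma_i u$ with $\delta \in D^\times$, $u \in U$; I then \emph{define} $\varphi(g) := h_i ||_\kappa u_p$. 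The uniqueness of the decomposition is exactly what makes this well-defined. I would then verify the transformation law: if $g = \delta\gamma_i u$ then for $\delta' \in D^\times$ and $u' \in U$ we have $g u' = \delta\gamma_i(uu')$ and $\delta' g u' = (\delta'\delta)\gamma_i(uu')$, both still honest decompositions, so $\varphi(\delta' g u') = h_i ||_\kappa (uu')_p = h_i ||_\kappa (u_p u'_p) = (h_i||_\kappa u_p)||_\kappa u'_p = \varphi(g)||_\kappa u'_p$, using that $||_\kappa$ is a right action of $\Sigma_0(p^m)$ on $A\widehat\otimes\calA$ and that $(uu')_p = u_p u'_p$. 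This requires knowing $u_p \in \Sigma_0(p^m)$ for $u \in U$, which holds since $U_0(p^m) \subset \Sigma_0(p^m)$.

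Finally I would check the two composites are the identity: starting from $\varphi$, restricting to the $\gamma_i$ and then re-extending recovers $\varphi$ because $\gamma_i = 1\cdot\gamma_i\cdot 1$ is the honest decomposition of $\gamma_i$ (here I use the normalization from Notation~\ref{N:representatives} that lets us take $\gamma_i$ with trivial $p$-component, so $||_\kappa (\gamma_i)_p$ is trivial — though even without that, $1 \in U$ works and $||_\kappa 1 = \id$), and for general $g = \delta\gamma_i u$ both sides equal $\varphi(\gamma_i)||_\kappa u_p$ by the transformation law; conversely, starting from $(h_i)$, the extension $\varphi$ has $\varphi(\gamma_i) = h_i ||_\kappa 1 = h_i$. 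The norm computation above shows the inverse is also norm-nonincreasing, so the map is an isometric isomorphism of Banach $A$-modules. The main (minor) obstacle is purely bookkeeping: being careful that the $p$-component of the honest decomposition is what enters $||_\kappa$, and that the sup norm is genuinely preserved — i.e. that $||_\kappa u_p$ is an isometry of $\calA^\circ$ for $u_p \in U_0(p^m)$, which comes down to $cz+d$ and $\kappa(cz+d)$ being units in $\calA^\circ \widehat\otimes A$ with unit inverses when $p^m \mid c$ and $d \in \ZZ_p^\times$.
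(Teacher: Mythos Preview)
Your proposal is correct and follows exactly the approach of the paper: the function is determined by its values at the $\gamma_i$ because of the honest double coset decomposition, and there is no further constraint on those values for the same reason. The paper's own proof is two sentences long and omits the explicit construction of the inverse, the verification of the transformation law, and the isometry check that you carefully spell out, but the underlying argument is identical.
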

\begin{proof}
This is clear as the function $\varphi$ is uniquely determined by its value at the chosen representatives $\gamma_i$.
There is no further restriction on the value of $\varphi(\gamma_i)$ because the double coset decomposition in Notation~\ref{N:representatives} is honest.
\end{proof}

\begin{corollary}
\label{C:dimension formula}
We have $\dim S_{k+1}^D(U; \psi_m) = kt$, for the number $t$ in Notation~\ref{N:representatives}.
\end{corollary}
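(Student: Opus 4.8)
The plan is to read off the dimension directly from Lemma~\ref{L:explicit space of automorphic forms} by identifying which of the functions occurring there are classical. First I would recall from Example~\ref{Ex:classical weight} that, for the character $\kappa = x^k\psi_m$ with $k\geq 1$, the space $S_{k+1}^D(U;\psi_m)$ is by definition the subspace of $S_{k+1}^{D,\dagger}(U;\psi_m)$ consisting of those $\varphi$ whose values lie in the subspace $L_{k-1}\subset \calA$ of polynomials in $z$ of degree $\leq k-1$; this $L_{k-1}$ is an $E$-vector space of dimension $k$.

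The key point is then to check that, under the isomorphism of Lemma~\ref{L:explicit space of automorphic forms}, the classical subspace $S_{k+1}^D(U;\psi_m)$ corresponds exactly to $\bigoplus_{i=0}^{t-1} L_{k-1}$. Equivalently, one must show that $\varphi$ takes values in $L_{k-1}$ if and only if $\varphi(\gamma_i)\in L_{k-1}$ for every $i$. One implication is immediate. For the converse, write an arbitrary $g\in D_f^\times$ as $g=\delta\gamma_i u$ with $\delta\in D^\times$, $u\in U$ (possible, and with $i$ unique, by the honest double-coset decomposition of Notation~\ref{N:representatives}); then $\varphi(g)=\varphi(\gamma_i)||_\kappa u_p$, and since $L_{k-1}$ is stable under the action of $\Sigma_0(p^m)$ by the formula \eqref{E:chi action} (as already observed in Example~\ref{Ex:classical weight}), we conclude $\varphi(g)\in L_{k-1}$. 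Hence $S_{k+1}^D(U;\psi_m)\cong\bigoplus_{i=0}^{t-1}L_{k-1}$, which has dimension $kt$.

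There is no genuine obstacle here; the only step that deserves attention is the well-definedness just discussed, namely that membership in $L_{k-1}$ can be tested on any single choice of double-coset representatives, which is precisely the $\Sigma_0(p^m)$-stability of $L_{k-1}$. I would also note in passing that $t<\infty$ — so that the formula makes sense — since $D^\times\backslash D_f^\times/U$ is finite, and that by construction the count is independent of the choice of $\psi_m$ and of the representatives $\gamma_i$.
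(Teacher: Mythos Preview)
Your argument is correct and is exactly the intended one: the paper states this as an immediate corollary of Lemma~\ref{L:explicit space of automorphic forms} without further proof, and what you have written is precisely the unpacking of that implication---restricting the isomorphism $\varphi\mapsto(\varphi(\gamma_i))$ to functions with values in $L_{k-1}$ and using the $U_0(p^m)$-stability of $L_{k-1}$ noted in Example~\ref{Ex:classical weight}.
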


\begin{proposition}
\label{P:explicit Up}
In terms of the explicit description of the space of overconvergent automorphic forms, the $U_p$- and $T_l$- (for $l \notin \calS$) operators can be described by the following commutative diagram.
\[
\xymatrix@C=80pt{
S^{D,\dagger}(U; \kappa) \ar[r]^{\varphi \mapsto (\varphi(\gamma_i))} \ar[d]_{\begin{tiny}\begin{split}\varphi \mapsto U_p\varphi\\ \varphi \mapsto T_l \varphi\end{split}\end{tiny}} &
\oplus_{i=0}^{t-1} A \widehat \otimes\calA \ar[d]^{\begin{tiny}\begin{split}\textrm{Map of}\\ \textrm{interest} \end{split}\end{tiny}\quad}_{\begin{tiny}\begin{split}\gothU_p\\ \gothT_l\end{split}\end{tiny}}
\\
S^{D,\dagger}(U; \kappa) \ar[r]^{\varphi \mapsto (\varphi(\gamma_i))} &
\oplus_{i=0}^{t-1} A \widehat \otimes \calA.
}
\]
Here the right vertical arrow $\gothU_p$ (resp. $\gothT_l$) is given by a matrix with the following description.
\begin{itemize}
\item[(1)] The entries of $\gothU_p$ (resp. $\gothT_l$) are sums of operators of the form $||_\kappa \delta_p$, where $\delta_p$ is the $p$-component of a \emph{global} element $\delta \in D^\times$ \emph{of norm $p$ (resp. norm $l$)}.
\item[(2)] There are exactly $p$ (resp. $l+1$) such operators appearing in each row and each column  of $\gothU_p$ (resp. $\gothT_l$).
\item[(3)] We have
$
\delta_p \in \big(\begin{smallmatrix} p\ZZ_p& \ZZ_p\\p^{m}\ZZ_p&\ZZ_p^\times \end{smallmatrix}\big)
$ (resp. $
\delta_p \in U_0(p^m) = \big( \begin{smallmatrix} \ZZ_p^\times& \ZZ_p\\p^{m}\ZZ_p&\ZZ_p^\times \end{smallmatrix} \big)
$).
\end{itemize}
\end{proposition}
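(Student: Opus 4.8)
The plan is to unwind the definition of $U_p$ (and $T_l$) in terms of the explicit coordinates provided by Lemma~\ref{L:explicit space of automorphic forms}, and then track precisely which global elements of $D^\times$ intervene. First I would fix $\varphi \in S^{D,\dagger}(U;\kappa)$ and a representative $\gamma_j$, and compute $(U_p\varphi)(\gamma_j) = \sum_{i=0}^{p-1} \varphi(\gamma_j v_i^{-1})\|_\kappa v_i$, where $v_i = \big(\begin{smallmatrix} p & 0 \\ ip^m & 1\end{smallmatrix}\big)$ is viewed as an element whose only nontrivial component is at $p$. For each $i$, the element $\gamma_j v_i^{-1} \in D_f^\times$ lies in some double coset $D^\times \gamma_{\sigma(i,j)} U$, so we may write $\gamma_j v_i^{-1} = \delta_{ij}^{-1}\, \gamma_{\sigma(i,j)}\, u_{ij}$ for a unique $(\delta_{ij}, u_{ij}) \in D^\times \times U$ (honesty of the decomposition). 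Substituting, $\varphi(\gamma_j v_i^{-1}) = \varphi(\delta_{ij}^{-1}\gamma_{\sigma(i,j)} u_{ij}) = \varphi(\gamma_{\sigma(i,j)}) \|_\kappa (u_{ij})_p$ by the automorphy condition. Hence the $(j, \sigma(i,j))$-entry of the matrix $\gothU_p$ contains the summand $\|_\kappa (u_{ij})_p \cdot \|_\kappa v_i = \|_\kappa \big( (u_{ij})_p v_i \big)$, and this already exhibits each entry as a sum of operators $\|_\kappa \delta_p$ of the required shape. The analogous computation for $T_l$ uses $w_i \in U_l\big(\begin{smallmatrix} l & 0\\0&1\end{smallmatrix}\big)U_l$ supported away from $p$, so there the twisting factor $\|_\kappa w_i$ is trivial and one only needs to track the double coset combinatorics.

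For part~(1) I would identify the relevant global element: from $\gamma_j v_i^{-1} = \delta_{ij}^{-1}\gamma_{\sigma(i,j)} u_{ij}$ we get $\delta_{ij} = \gamma_{\sigma(i,j)} u_{ij} v_i \gamma_j^{-1} \in D^\times$ (it lies in $D^\times$ since the other three factors rearrange an element of $D_f^\times$ that we are asserting is globally rational — more precisely $\delta_{ij}$ is forced to be in $D^\times$ by the honesty of the decomposition). Its $p$-component is $(\delta_{ij})_p = (\gamma_{\sigma(i,j)})_p (u_{ij})_p v_i (\gamma_j)_p^{-1} = (u_{ij})_p v_i$, using that by Notation~\ref{N:representatives} we have arranged every $\gamma_i$ to have \emph{trivial} $p$-component. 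So the operator attached to this summand is exactly $\|_\kappa (\delta_{ij})_p$ with $\delta_{ij} \in D^\times$. For the norm: $\Nm(v_i) = p$, $\Nm(\gamma_i) \in \widehat\ZZ^\times$, and $\Nm(u) \in \widehat\ZZ^\times$ for $u \in U$; since $\Nm$ takes values in $\QQ^\times_{>0}$ on $D^\times$ and must match $p \cdot (\text{unit})$, rationality forces $\Nm(\delta_{ij}) = p$ (up to sign, which is positive as $D$ is definite). The same bookkeeping with $l$ replacing $p$ gives norm $l$ for the $T_l$-entries.

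Part~(2) is a counting statement: in the expression $(U_p\varphi)(\gamma_j)$ there are exactly $p$ summands indexed by $i = 0, \dots, p-1$, each contributing one operator to row $j$, so every row of $\gothU_p$ has exactly $p$ operators; the column count follows by the same argument applied to $U_p^\dagger$ or, more concretely, by observing that $\{v_i\}$ and the adjoint coset representatives give a symmetric incidence, or simply because $U_p$ on the finite set $D^\times\backslash D_f^\times / U$ is given by a $p$-regular bipartite incidence (each of the $t$ classes receives $p$ preimages under the Hecke correspondence). Part~(3) is the purely local computation $(u_{ij})_p v_i$: since $u \in U$ has $p$-component in $U_0(p^m) = \big(\begin{smallmatrix}\ZZ_p^\times & \ZZ_p \\ p^m\ZZ_p & \ZZ_p^\times\end{smallmatrix}\big)$ and $v_i = \big(\begin{smallmatrix} p & 0 \\ ip^m & 1\end{smallmatrix}\big)$, multiplying out gives a matrix in $\big(\begin{smallmatrix} p\ZZ_p & \ZZ_p \\ p^m\ZZ_p & \ZZ_p^\times\end{smallmatrix}\big)$ (the bottom-right entry is a unit times $1$ plus something in $p^m\ZZ_p$, hence a unit; the top-left is a unit times $p$), which is the claimed containment; and for $T_l$ the $p$-component of the relevant global element is just $(u_{ij})_p \in U_0(p^m)$ since $w_i$ is trivial at $p$.

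The main obstacle I anticipate is purely notational rather than conceptual: keeping the bookkeeping of the permutation $\sigma$, the global elements $\delta_{ij}$, and the local units $u_{ij}$ consistent across the $U_p$ and $T_l$ cases, and in particular verifying carefully that the column count in part~(2) really is $p$ (resp. $l+1$) — this requires knowing that the Hecke correspondence $D^\times\backslash D_f^\times/U \rightrightarrows D^\times\backslash D_f^\times/U$ is balanced, which ultimately comes from $[U : U \cap v_0 U v_0^{-1}] = p$ together with the honesty hypothesis \eqref{E:buzzard condition}; once that is granted, the rest is a routine matrix multiplication that the paper should carry out once for $U_p$ and remark is identical for $T_l$.
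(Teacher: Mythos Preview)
Your proposal is correct and follows essentially the same route as the paper: write $\gamma_j v_i^{-1}$ in the honest double coset form $\delta_{ij}^{-1}\gamma_{\sigma(i,j)}u_{ij}$, use the triviality of the $p$-component of the $\gamma_i$'s to identify the operator as $\|_\kappa(\delta_{ij})_p$, and read off the norm and the local shape from $\delta_{ij}\in \gamma_{\sigma(i,j)}U\big(\begin{smallmatrix}p&0\\0&1\end{smallmatrix}\big)U\gamma_j^{-1}$ together with $\Nm(\delta_{ij})\in\QQ^\times_{>0}\cap p\widehat\ZZ^\times=\{p\}$. The paper, like you, treats only $U_p$ explicitly and leaves the column count in (2) implicit; your remark that it follows from the balancedness of the Hecke correspondence is exactly the missing word.
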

\begin{proof}
We only prove this for the $U_p$-operator and the proof for the $T_l$-operator ($l \notin \calS$) is similar.  For each $\gamma_i$, we have
\[
(U_p \varphi)(\gamma_i) = \sum_{j=0}^{p-1}
\varphi(\gamma_i v_j^{-1})||_\kappa v_j.
\]
Now we can write each $\gamma_iv_j^{-1}$ \emph{uniquely} as $\delta_{i,j}^{-1} \gamma_{\lambda_{i,j}} u_{i,j}$ for $\delta_{i,j} \in D^\times$, $\lambda_{i,j} \in \{0, \dots, t-1\}$, and $u_{i,j} \in U$.
Then we have
\[
(U_p \varphi)(\gamma_i) = \sum_{j=0}^{p-1}
\varphi(\delta_{i,j}^{-1} \gamma_{\lambda_{i,j}} u_{i,j})||_\kappa v_j = \sum_{j=0}^{p-1}
\varphi( \gamma_{\lambda_{i,j}})||_\kappa  (u_{i,j,p}v_j),
\]
where $u_{i,j,p}$ is the $p$-component of $u_{i,j}$.
Substitute back in $u_{i,j}v_j = \gamma_{\lambda_{i,j}}^{-1} \delta_{i,j} \gamma_i$ and note the fact that both $\gamma_i$ and $\gamma_{\lambda_{i,j}}$ have trivial $p$-component by our choice in Notation~\ref{N:representatives}.  We have
\[
(U_p\varphi)(\gamma_i) =
\sum_{j=0}^{p-1} \varphi(\gamma_{\lambda_{i,j}})||_\kappa \delta_{i,j,p},
\]
where $\delta_{i,j,p}$ is the $p$-component of the \emph{global element} $\delta_{i,j} \in D^\times$.
We now check the description of each $\delta_{i,j}$:
\[
\delta_{i,j} = \gamma_{\lambda_{i,j}} u_{i,j} v_j \gamma_i^{-1}  \in \gamma_{\lambda_{i,j}} U\big(\begin{smallmatrix} p&0\\0&1 \end{smallmatrix}\big)U \gamma_i^{-1}.\]
From this, we see that the $p$-component of $\delta_{i,j}$ lies in $\big(\begin{smallmatrix} p\ZZ_p& \ZZ_p\\p^{m}\ZZ_p&\ZZ_p^\times \end{smallmatrix}\big)$.
Moreover, the norm of
$\gamma_{\lambda_{i,j}} U\big(\begin{smallmatrix} p&0\\0&1 \end{smallmatrix}\big) U \gamma_i^{-1}$  lands in $p \widehat \ZZ^\times$, because our choice of the representatives satisfies $\Nm(\gamma_{i}) \in \widehat \ZZ^\times$  by Notation~\ref{N:representatives}.
Therefore, $\Nm(\delta_{i,j}) \in \QQ^\times_{>0} \cap p\widehat \ZZ^\times = \{p\}$.  This concludes the proof of the proposition.
\end{proof}

\subsection{Infinite matrices and generating functions}
For an infinite matrix (where the row and column indices start with $0$ as opposed to $1$)

\begin{equation}
\label{E:infinite matrix}
M = \begin{pmatrix}
m_{0,0} & m_{0,1} & m_{0,2} &\cdots\\
m_{1,0} & m_{1,1} & m_{1,2} &\cdots\\
m_{2,0} & m_{2,1} & m_{2,2} &\cdots\\
\vdots & \vdots & \vdots & \ddots
\end{pmatrix}
\end{equation}
with coefficients in an affinoid $E$-algebra $A$, we consider the following formal power series:
\[
H_M(x,y) = \sum_{i,j \in \ZZ_{\geq 0}} m_{i,j} x^i y^j \in A\llbracket x,y\rrbracket.
\]
It is called the \emph{generating series} of the matrix $M$.
When $M$ is the matrix for an operator $T$ acting on the Tate algebra $A \widehat \otimes \calA = A\langle z\rangle$ over $A$ with respect to the basis $1, z, z^2, \dots$, we call $H_M(x,y)$ the \emph{generating series} of $T$.

For $u \in E$, we write $\Diag(u)$ for the infinite diagonal matrix with diagonal elements $1, u, u^2, \dots$.
Then we have
\[
H_{\Diag(u)M\Diag(v)} (x,y)= H_M(u x,v y).
\]
For $t \in \NN$, we write $\Diag(u; t)$ for the infinite diagonal matrix with diagonal elements $1, \dots, 1, u, \dots, u, u^2, \dots$ where each number appears repeatedly $t$ times.

The following key calculation is due to Jacobs \cite[Proposition~2.6]{jacobs}.

\begin{proposition}
\label{P:generating series}
Let $\kappa: \Gamma \to A^\times$ be the universal character  for an affinoid space $\Max(A)$ over $\calW^{< r_m}$.  Let $\big(\begin{smallmatrix}a&b\\c&d\end{smallmatrix} \big)$ be a matrix in $\Sigma_0(p^m)$.
The generating series of the operator $||_{\kappa}\big(\begin{smallmatrix}a&b\\c&d\end{smallmatrix} \big)$ acting on $A \widehat \otimes \calA$ (with respect to the basis $1, z, z^2, \dots$) is given by
\[
\frac{\kappa(c x+d)}{c x+d-a xy-b y}.
\footnote{Comparing to the convetion in \cite{jacobs}, we loose an extra factor of $cx+d$ in the denominator due to our normalization \eqref{E:right action}.  There is no real improvement in our formula except that it looks shorter.}
\]
Here we point out that, although the operator $||_{\kappa}\big(\begin{smallmatrix}a&b\\c&d\end{smallmatrix} \big)$ when viewed as the action of the monoid $\Sigma_0(p^m)$ is a right action, we only use one particular operator and will not discuss the composition; so we still use the column vector convention (pretending it as a left operator).
\end{proposition}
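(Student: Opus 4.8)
The plan is to compute the action of $||_\kappa\big(\begin{smallmatrix}a&b\\c&d\end{smallmatrix}\big)$ on each basis vector $z^j$ and then package all the coefficients into a single bivariate generating series. By definition \eqref{E:right action}, the operator sends $h(z)$ to $\frac{\kappa(cz+d)}{cz+d}h\big(\frac{az+b}{cz+d}\big)$. Applying this to $h(z) = z^j$ gives
\[
\big(z^j||_\kappa\gamma\big)(z) = \frac{\kappa(cz+d)}{cz+d}\cdot\frac{(az+b)^j}{(cz+d)^j}.
\]
If we write $z^j||_\kappa\gamma = \sum_{i\geq 0} m_{i,j}\, z^i$, then by the very definition of the generating series,
\[
H_M(x,y) = \sum_{i,j\geq 0} m_{i,j}x^i y^j = \sum_{j\geq 0}\Big(\sum_{i\geq 0} m_{i,j}x^i\Big) y^j = \sum_{j\geq 0}\big(z^j||_\kappa\gamma\big)(x)\, y^j,
\]
where in the last step I am substituting $z = x$ into the polynomial-in-$z$ (really power-series-in-$z$) expression for $z^j||_\kappa\gamma$. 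So the whole computation reduces to summing a geometric-type series in $y$.

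**Carrying this out**, substitute the formula for $(z^j||_\kappa\gamma)(x)$ and sum over $j$:
\[
H_M(x,y) = \frac{\kappa(cx+d)}{cx+d}\sum_{j\geq 0}\frac{(ax+b)^j}{(cx+d)^j}y^j = \frac{\kappa(cx+d)}{cx+d}\cdot\frac{1}{1 - \frac{(ax+b)y}{cx+d}} = \frac{\kappa(cx+d)}{cx+d - (ax+b)y},
\]
which is exactly $\dfrac{\kappa(cx+d)}{cx+d-axy-by}$ as claimed. A couple of small points need to be checked to make this rigorous: first, that $\kappa(cx+d)$ makes sense as an element of $A\widehat\otimes\calA$ — this uses $p^m\mid c$, $d\in\ZZ_p^\times$, and the convergence condition on $\kappa$ via the extension \eqref{E:extend kappa A}, exactly as in the remark following \eqref{E:extend kappa A}; second, that $\frac{1}{cx+d}$ and hence the geometric series converge in $A\widehat\otimes\calA = A\langle x\rangle$, again because $d$ is a unit and $c\in p^m\ZZ_p$, so $cx+d$ is a unit in $A\langle x\rangle$ with $|cx+d-d|<1$.

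**The main subtlety**, rather than an obstacle, is bookkeeping the identification of ``the coefficient of $y^j$ in $H_M(x,y)$, as a power series in $x$'' with ``the image of $z^j$ under the operator, with $z$ renamed $x$'' — i.e., making sure the column-index/basis-vector convention and the row-index/output-coefficient convention are consistently matched with the generating-series definition $H_M(x,y)=\sum m_{i,j}x^iy^j$ stated just above the proposition. One must also confirm that interchanging the sum over $j$ with the expansion in $x$ is legitimate: since everything lands in $A\langle x\rangle\widehat\otimes(\text{power series in }y)$ and the partial sums converge coefficientwise, Fubini for formal/convergent series applies. Once the conventions are pinned down the identity is a one-line geometric series computation, so I would present it essentially as above, with a sentence each on the convergence of $\kappa(cx+d)$ and of the geometric series, and a footnote-style remark matching it against the normalization in \cite{jacobs}.
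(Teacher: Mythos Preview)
Your proof is correct and follows essentially the same approach as the paper: compute the action on each $z^j$, substitute $z=x$, and sum the resulting geometric series in $y$. The paper presents this in a single three-line display without your additional commentary on convergence and bookkeeping, but the mathematical content is identical.
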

\begin{proof}
This is straightforward.
By definition,
\begin{align*}
H_{||_{\kappa}\big(\begin{smallmatrix}a&b\\c&d\end{smallmatrix} \big)}(x,y)  &=
\sum_{i \in \ZZ_{\geq 0}}
y^i \frac{\kappa(c x+d)}{c x+d}\cdot \big(\frac{a x+b}{c x+d}\big)^i\\
&=
\frac{\kappa(c x+d)}{c x+d}\cdot \frac1{1-y \cdot \frac{a x+b}{c x+d}}
=\frac{\kappa(c x+d)}{c x+d-a xy-b y}.\qedhere
\end{align*}
\end{proof}

Combining Proposition~\ref{P:generating series} with Proposition~\ref{P:explicit Up}, we can give a good description of the infinite matrices for $\gothU_p$ and $\gothT_l$ (for $l \notin \calS$).


\subsection{Hodge polygon and Newton polygon of a matrix}
\label{S:Hodge v.s. Newton}
Before proceeding, we remind the readers some basic facts about $p$-adic analysis. We will use them later freely without referencing back here. Let $M \in \rmM_n(E)$ be an $n\times n$-matrix.
\begin{enumerate}
\item
The \emph{Newton polygon} of $M$ is the convex polygon starting at $(0,0)$ whose slopes are exactly the $p$-adic valuations of the eigenvalues of $M$, counted with multiplicity.
\item

The \emph{Hodge polygon} of $M$ is the convex hull of the vertices
\[
 \big( i,\  \textrm{the  minimal $p$-adic valuation of the determinants of all $i\times i$-minors} \big).
\]

\item
The Hodge polygon is  invariant when conjugating $M$ by elements in $\GL_n(\calO)$; the Newton polygon is  invariant when conjugating $M$ by elements in $\GL_n(E)$.

\item
If the slopes of the Hodge polygon of $M$ are $a_1\leq  \dots\leq a_n$, then there exist matrices $A, B \in \GL_n(\calO)$ such that $AMB$ is a diagonal matrix whose diagonal elements have valuation exactly $a_1, \dots, a_n$.
Conversely, if such $A$ and $B$ exist, the Hodge polygon of $M$ has the described slopes.

\item
If the slopes of the Hodge polygon of $M$ are $a_1\leq  \dots\leq a_n$, then there exists a matrix $A \in  \GL_n(\calO)$ such that the valuations of all entries in the $i$-th row of $AMA^{-1}$ are at least $ a_i$ for all $i$.
Conversely, when such $A$ exists, the Hodge polygon of $M$ lies above the polygon with slopes $a_1, \dots, a_n$.

\item
When $\calO^{\oplus n}$ can be written as $V \oplus V'$ for two $M$-stable $\calO$-submodules.
Then the set of Newton slopes for $M$ is the union of the sets of Newton slopes of $M$ acting on $V$ and $V'$, counted with multiplicity.  The same holds for Hodge slopes.

\item
It is always true that the Newton polygon lies above the Hodge polygon.  This also holds for an  infinite matrix associated to a nuclear operator.
\end{enumerate}

We now prove Theorem~\ref{T:theorem A} from the introduction (through the Jacquet--Langlands correspondence \eqref{E:Jaquet-Langlands classical}):

\begin{theorem}
\label{T:weak Hodge polygon}
Let $\psi_m$ be a finite character of $\Zp^\times$ of conductor $p^m$ with the same $m$ that defines the level structure $U$.\footnote{Again, we allow $\psi$ to be trivial, in which case $m=1$ if $p>2$ and $m=2$ if $p=2$.}
Recall that  $\dim S_2^D(U; \psi_m)=t$.
Then the Newton polygon for the slopes of $U_p$ acting on $S^{D,\dagger}(U, x\langle x\rangle^k\psi_m)$ for $k \in \ZZ_p$ lies above the polygon with vertices
\begin{equation}
\label{E:trivial hodge bound}
(0,0), (t,0), (2t, t), \dots, (nt, \tfrac{n(n-1)}2 t), \dots.
\end{equation}
\end{theorem}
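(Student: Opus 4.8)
The strategy is to combine the structural description of the $U_p$-matrix from Proposition~\ref{P:explicit Up} with the generating-series formula of Proposition~\ref{P:generating series}, and then to read off a lower bound on the Hodge polygon of the $t\times t$ block matrix $\gothU_p$ acting on $\bigoplus_{i=0}^{t-1} A\widehat\otimes\calA$. By fact (7) of Subsection~\ref{S:Hodge v.s. Newton}, the Newton polygon of $U_p$ lies above its Hodge polygon, so it suffices to show that the Hodge polygon of $\gothU_p$ lies above the polygon with vertices \eqref{E:trivial hodge bound}. By fact (5), this in turn follows if we can exhibit a change of orthonormal basis (compatible with the integral structure) after which the $r$-th ``row block'' of $\gothU_p$ has all entries divisible by $p^{h(r)}$, where $h(r)$ is the slope-sum function whose graph is the polygon \eqref{E:trivial hodge bound} — concretely, the block of rows indexed by $z$-degrees between $nt$ and $(n+1)t-1$ should have entries of valuation $\geq n$.

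The key computation is the following: each entry of $\gothU_p$ is, by Proposition~\ref{P:explicit Up}, a sum of $p$ operators of the form $\|_\kappa\delta_p$ with $\delta_p\in\big(\begin{smallmatrix} p\ZZ_p& \ZZ_p\\ p^m\ZZ_p&\ZZ_p^\times\end{smallmatrix}\big)$, so $\delta_p=\big(\begin{smallmatrix} a&b\\ c&d\end{smallmatrix}\big)$ with $p\mid a$, $p^m\mid c$, $d\in\ZZ_p^\times$. By Proposition~\ref{P:generating series} the generating series of $\|_\kappa\delta_p$ is $\kappa(cx+d)/(cx+d-axy-by)$. First I would expand this as a power series in $x$ and $y$: since $\kappa$ is a unit-valued character and $cx+d$ is a unit power series (as $d$ is a unit and $p^m\mid c$), the numerator is a unit power series in $A^\circ\langle x\rangle$; the denominator, divided by the unit $cx+d$, equals $1-\tfrac{ax+b}{cx+d}\,y$, so the whole series is $\sum_{j\geq 0}\big(\tfrac{ax+b}{cx+d}\big)^j y^j$ times a unit. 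Now $\tfrac{ax+b}{cx+d}= (ax+b)\cdot(\text{unit})$ and $ax+b$ has the property that \emph{both} coefficients lie in $p\ZZ_p$ only for the $x$-term; but crucially $b$ need not be divisible by $p$. The honest input is that $a\in p\ZZ_p$, so modulo rewriting, the coefficient of $x^iy^j$ in the generating series has valuation at least the number of times one is forced to use the $ax$ (rather than $b$) term, i.e. at least $i$ when $i\le j$ — more precisely the coefficient of $x^iy^j$ has valuation $\geq \max(0, i - \text{(something)})$; the clean statement one proves is that the coefficient of $x^iy^j$ vanishes unless $i\le j$ and, combined with the block structure, the $(i,j)$-entry of $\gothU_p$ (where $i,j$ now run over all of $\ZZ_{\ge0}$ after unfolding the $t$ blocks) has valuation $\geq \lfloor i/t\rfloor - \lfloor j/t\rfloor$ when positive, together with the support condition $i\le j+ (\text{bounded})$. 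Tracking these two facts — a staircase support condition and a valuation lower bound governed by how far below the diagonal an entry sits — against the combinatorics of the polygon $(nt,\tfrac{n(n-1)}2t)$ gives exactly the claimed bound via fact (4) or (5).

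The main obstacle, and the place requiring genuine care rather than routine expansion, is bookkeeping the interaction between the $t$-fold block structure (coming from the $t$ double cosets $D^\times\gamma_iU$) and the single-variable degree filtration on $\calA=A\langle z\rangle$: one must choose the ordering of the orthonormal basis so that ``degree $n$ in $z$ across all $t$ blocks'' forms the $n$-th group of $t$ basis vectors, verify that $\gothU_p$ is genuinely lower-triangular-with-prescribed-valuation-decay in this grouped basis, and check that the permutation/regrouping is integral (lies in $\GL(\calO)$) so that fact (3) applies. A secondary subtlety is that $\gothU_p$'s entries are \emph{sums} of $p$ such operators $\|_\kappa\delta_p$; since valuation of a sum is $\geq$ the minimum of the valuations, the per-operator bound suffices, but one should note this does not degrade the estimate. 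Once the grouped-basis matrix is seen to have its $n$-th row-block entries divisible by $p^{\max(0,n-m)}$ for appropriate $m$ — or, after a further finite integral conjugation absorbing the first few blocks, divisible by $p^{n}$ in the $n$-th block and $p^{0}$ elsewhere in a way matching the vertices — the Hodge-polygon lower bound \eqref{E:trivial hodge bound} follows, and then fact (7) upgrades it to the Newton polygon, completing the proof and hence Theorem~\ref{T:theorem A} via the dimension formula $\dim S_{k+1}^D(U;\psi_m)=kt$ of Corollary~\ref{C:dimension formula} and the classicality discussion.
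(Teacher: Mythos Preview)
Your overall strategy is exactly the paper's: pass to the explicit matrix $\gothU_p$ via Proposition~\ref{P:explicit Up}, analyze each block entry through the generating-series formula of Proposition~\ref{P:generating series}, regroup the orthonormal basis so that all $z$-degree-$n$ vectors across the $t$ blocks sit together, and then invoke the Hodge/Newton comparison. The regrouping step you flag as ``the main obstacle'' is indeed the only nontrivial bookkeeping, and you describe it correctly.

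Where your analysis goes off the rails is the per-block estimate on the generating series. Two concrete problems:
\begin{itemize}
\item The claim that ``the coefficient of $x^iy^j$ vanishes unless $i\le j$'' is \emph{false}. The $cx$ term in the denominator and the $x$-dependence of $\kappa(cx+d)=(cx+d)\psi_m(d)\langle d\rangle^k(1+\tfrac{c}{d}x)^k$ in the numerator both contribute $x$-powers without accompanying $y$-powers, so the matrix is not upper-triangular in any sense.
\item Your fallback bound ``valuation $\geq \lfloor i/t\rfloor-\lfloor j/t\rfloor$ when positive'' is both weaker than needed and not what the computation actually gives; the talk of a ``staircase support condition'' and ``further finite integral conjugation absorbing the first few blocks'' is unnecessary.
\end{itemize}

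The correct statement is much cleaner: for $\delta_p=\big(\begin{smallmatrix}a&b\\c&d\end{smallmatrix}\big)$ with $p\mid a$, $p^m\mid c$, $d\in\ZZ_p^\times$, the generating series lies in $\calO\llbracket y,\,pxy,\,p^m x\rrbracket\subseteq\calO\llbracket y,\,px\rrbracket$. Equivalently, the coefficient of $x^iy^j$ is divisible by $p^i$ for \emph{every} pair $(i,j)$, not just $i\le j$. To see this, factor out the unit $d\psi_m(d)\langle d\rangle^k$ and expand $(1+\tfrac{c}{d}x)^{k+1}/\bigl(1+\tfrac{c}{d}x-\tfrac{a}{d}xy-\tfrac{b}{d}y\bigr)$: each occurrence of $x$ in a monomial comes either from $\tfrac{c}{d}x$ (valuation $\geq m\geq 1$) or from $\tfrac{a}{d}xy$ (valuation $\geq 1$), and $\binom{k+1}{n}\in\ZZ_p$ for $k\in\ZZ_p$. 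Hence the $i$th row of each infinite block is divisible by $p^i$ outright. After your regrouping, the $n$th row-block of size $t$ is divisible by $p^n$ with no further adjustment, and the Hodge bound \eqref{E:trivial hodge bound} follows immediately from fact~(5) of Subsection~\ref{S:Hodge v.s. Newton}.
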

\begin{proof}
By Lemma~\ref{L:explicit space of automorphic forms} and Proposition~\ref{P:explicit Up} (in our case $A = E = \QQ_p(\zeta_{p^{m-1}})$), it suffices to understand the matrix for the operator $\gothU_p$.
We first give $\oplus_{i=0}^{t-1} \calA$ a basis:
\[
1_0, z_0, z_0^2, \dots, 1_1, z_1, z^2_1, \dots, 1_{t-1}, z_{t-1}, \dots,
\]
 where the subscripts indicate which copy of $\calA$ the element comes from. Then the matrix for $\gothU_p$ is a $t\times t$-block matrix such that each block is an infinite matrix.
By Proposition~\ref{P:generating series}, the generating series of each block is the sum of power series of the form
\[
\frac{d\psi_m(d) \langle d\rangle ^k(1+ \frac cd x)^{k+1}}{cx+d-axy-by}, \quad \textrm{with } p|a\textrm{ and }p^m|c \textrm{ by Proposition~\ref{P:explicit Up}}.
\]
When $k \in \ZZ_p$, the expression above lands in $\calO\llbracket  p^mx, pxy,y\rrbracket \subseteq \calO\llbracket px, y\rrbracket$. In particular, the $i$th row of the corresponding infinite matrix is divisible by $p^i$.

We can then rewrite the matrix of $\gothU_p$ under the following basis of $\oplus_{i=0}^{t-1} \calA$:
\[
1_0,1_1, \dots, 1_{t-1}, z_0,\dots, z_{t-1}, z_0^2, \dots.
\]
Then the matrix of $\gothU_p$ becomes an infinite block matrix, where each block is $t\times t$.
Moreover, the discussion above implies that the $i$th block row    is entirely divisible by $p^i$.
In other words, the Hodge polygon of this matrix lies above the polygon with vertices given by \eqref{E:trivial hodge bound}.
So the Newton polygon of $\gothU_p$ also lies above it.
\end{proof}

\begin{remark}
\label{R:improve Hodge bound classical forms}
We discuss how one can improve the lower bound of the Newton polygon of the $U_p$-action on  the classical automorphic forms $S_{k+1}^D(U; \psi)$ when $\psi$ has conductor $p$ and $m=1$ (or $4$ and $m=2$).  (The case when $m \geq 2$ for $p >2$ and $m \geq 3$ for $p=2$ will be studied in length in Section~\ref{Section:improve Hodge polygon}.)
Note that  this includes the case when $\psi$ is trivial.
For simplicity, we assume that the condition \eqref{E:buzzard condition} holds for $U$ replaced by $\prod_{l \neq p} U_l \times \GL_2(\ZZ_p)$.  In particular, $(p+1)|t$ if $p>2$ and $6|t$ if $p =2$.
\begin{enumerate}
\item
When $\psi$ is non-trivial of conductor $p$ or $4$,
we know that the $U_p$-slopes on $S^D_{k+1}(U; \psi )$ are exactly given by $k$ minus the $U_p$-slopes on $S^D_{k+1}(U; \psi^{-1} )$, by Atkin--Lehner theory (see Proposition~\ref{P:Up pair to p}  for the proof in the case of $k=1$, and the general case being similar).
Thus, applying Theorem~\ref{T:weak Hodge polygon} to $S^D_{k+1}(U; \psi) $ and $ S^D_{k+1}(U; \psi^{-1})$ and using the fact above, we can improve the lower bound in Theorem~\ref{T:weak Hodge polygon} of the Newton polygon for the $U_p$-action on the \emph{direct sum} $S^D_{k+1}(U; \psi) \oplus S^D_{k+1}(U; \psi^{-1})$, which must lie above the polygon with slopes
\begin{itemize}
\item (if $k$ is even) $0,\, 1, \, \dots \,, \frac k2 -1, \, \frac k2+1, \, \frac k2+2, \, \dots, \, k$, each with multiplicity $2t$;
\item (if $k$ is odd)
$0,\, 1, \, \dots \,, \frac {k-1}2, \, \frac {k+1}2, \, \frac {k+3}2, \, \dots, \, k$, each with multiplicity $2t$, except the slopes $\frac {k-1}2$ and $\frac {k+1}2$ each has multiplicity $t$.
\end{itemize}
We do not know how to improve the bound on the $U_p$-slopes of each individual $S_{k+1}^D(U; \psi)$.
\item
When $\psi$ is the trivial character, $S^D_{k+1}(U;  \textrm{triv})$ is the direct sum of the $p$-old part $S^D_{k+1}(U;  \textrm{triv})^{p\textrm{-old}}$ and the $p$-new part $S^D_{k+1}(U;  \textrm{triv})^{p\textrm{-new}}$.
Given our earlier hypothesis on $U$, we have
\[
\dim S^D_{k+1}(U;  \textrm{triv})^{p\textrm{-old}} = \tfrac{2}{p+1}kt, \quad \textrm{and} \quad \dim S^D_{k+1}(U;  \textrm{triv})^{p\textrm{-new}} = \tfrac{p-1}{p+1}kt.
\]
The eigenvalues of $U_p$-action on $S^D_{k+1}(U; \textrm{triv})^{p\textrm{-new}}$ all have valuation $(k-1)/2$;
whereas the eigenvalues of $U_p$-action on $S^D_{k+1}(U; \textrm{triv})^{p\textrm{-old}}$ can be paired so that the product of each pair has valuation $k$, according to the property of $p$-stabilization.
Thus the total $U_p$-slope on $S_{k+1}^D(U; \mathrm{triv})$ is
\begin{equation}
\label{E:total slope when char is triv}
\tfrac{1}{p+1}kt \cdot k + \tfrac{p-1}{p+1}kt \cdot \tfrac {k-1}2 = \tfrac{pk-p+k+1}{2(p+1)} kt.
\end{equation}
We claim that  the Newton polygon for the $U_p$-action on $S^D_{k+1}(U; \textrm{triv})$ lies above the polygon with slopes
\begin{itemize}
\item
$0, 1, \dots, [\frac{k}{p+1}]-1$, each with multiplicity $t$,
\item
$[\frac{k}{p+1}]$ with multiplicity $\frac{kt}{p+1} - [\frac{k}{p+1}] t$,
\item
$\frac{k-1}2$ with multiplicity $\frac{(p-1)kt}{p+1}$,
\item
$k - [\frac{k}{p+1}]$ with multiplicity $\frac{kt}{p+1} - [\frac{k}{p+1}] t$, and
\item
$k - [\frac{k}{p+1}]+1, k- [\frac{k}{p+1}]+2, \dots, k$, each with  multiplicity $t$.
\end{itemize}
Indeed, the fact that the lower bound over the interval $[0, \frac{kt}{p+1}]$ follows from Theorem~\ref{T:weak Hodge polygon}.
The lower bound over the interval $[\frac{pkt}{p+1}, kt]$ follows from the above lower bound together with the property of $p$-stabilization (as we know the total $U_p$-slopes as computed in \eqref{E:total slope when char is triv}).
The fact that the dimension of the $p$-new forms is $\frac{p-1}{p+1}kt$ and the $p$-stabilization property implies that the $n$th slope with $n \in (\frac{kt}{p+1}, \frac{pkt}{p+1}]$ of $S_{k+1}^D(U;\mathrm{triv})$ is greater than or equal to $\frac{k-1}2$. So the Newton polygon of the $U_p$-action on $S_{k+1}^D(U; \mathrm{triv})$, over the interval $[\frac{kt}{p+1}, \frac{pkt}{p+1}]$ lies above the segment with slope $\frac{k-1}{2}$ starting at the known lower bound at the point $x = \frac{kt}{p+1}$.
This proves the claim.
\end{enumerate}

We point out that the bounds in both cases share the same end point with the actual Newton polygon of the $U_p$-action.
Moreover, the distance of this end point and the vertex $(kt, \frac{k(k-1)}{2}t)$ given by Theorem~\ref{T:weak Hodge polygon} is linear in $k$. So Theorem~\ref{T:weak Hodge polygon} is already a quite sharp bound in this sense.
\end{remark}

\begin{remark}
\label{R:heuristic gouvea}
Keep the setup as in Remark~\ref{R:improve Hodge bound classical forms}(2) and consider the case of trivial character now.
 Gouv\^ea \cite{gouvea} has computed many numerical examples\footnote{Rigorously speaking, Gouv\^ea \cite{gouvea} worked with actual modular forms, but we expect the analogue of his conjecture applies in this case.} to support his expectation of the distribution of $U_p$-slopes on $S^D_{k+1}(U; \textrm{triv})$.
If one uses $a_1(k)\leq \dots \leq a_{kt/(p+1)}(k)$ to denote the lesser slopes on the space of $p$-old forms,
Gouv\^ea conjectured that the distribution given by the numbers
\[
a_1(k) / k,\, a_2(k)/k,\, \dots, \,a_{kt/(p+1)}/k, \textrm{ as } k \to \infty,
\]
converges to a uniform distribution on $[0,\frac{1}{p+1}]$.

In view of the discussion above, this conjecture can be reinterpreted as: the Newton polygon of $U_p$-action on $S^D_{k+1}(U; \mathrm{triv})$ ``stays close" to the lower bound given in Remark~\ref{R:improve Hodge bound classical forms}.
At least, the polygon lower bound provides an inequality for the distribution conjectured by Gouv\^ea.
\end{remark}

\section{An example of explicit computation}
\label{Section:explicit example}
In this section, we give an example of by-hand computation of the $U_p$-slopes for a particular definite quaternion algebra, a prime number $p$, and a level structure.
This case was  considered earlier by Jacobs \cite{jacobs}, a former student of Buzzard, in his thesis.
Unfortunately, Jacobs relied too much on  the computer and hence made the computation unaccessable to people who are interested in checking for patterns.
We reproduce a variant of this computation to serve as a key toy model of our various proofs.
We hope that this hand-on computation can inspire the readers to further develop this technique.

\subsection{The quaternion algebra}
In this section, we consider the quaternion algebra $D$ which ramifies exactly at $2$ and $\infty$.  Explicitly, it is
\[
D = \QQ\langle \i,\j\rangle / (\i\j = -\j\i, \i^2 = \j^2 = -1).
\]
Here we use angled bracket to signify the non-commutativity of the algebra.
It is conventional to put $\k = \i\j$.
The maximal order of $D$ is given by
\[
\calO_D  = \ZZ \big\langle\, \i,\, \j,\, \tfrac12 (1+\i+\j+\k) \big\rangle.
\]
The unit group consists of $24$ elements; they are
\[
\calO_D^\times = \big\{ \pm\! 1, \pm \i, \pm \j, \pm \k, \tfrac12(\pm 1 \pm \i \pm \j \pm \k)\; \big\}.
\]

\subsection{Level structure}
Our distinguished prime $p$ is $3$.
Put $D_f = D\otimes \AAA_f$.
For each $l \neq 2$, we identify $D \otimes \QQ_l$ with $\rmM_2(\QQ_l)$.
For $l = 2$, we use $D^\times(\ZZ_2)$ to denote the maximal compact subgroup of $  (D \otimes \QQ_2)^\times$.
We consider the following open compact subgroup of $D^\times_f$:
\begin{equation}
\label{E:level structure example}
U = D^\times(\ZZ_2) \times \prod_{l \neq 2,3}\GL_2(\ZZ_l) \times \begin{pmatrix}
\ZZ_3^\times & \ZZ_3\\
9 \ZZ_3 & 1+ 3\ZZ_3
\end{pmatrix}.\footnote{Our choice of the level structure is slightly different from \cite{jacobs}, who uses the $\Gamma_1(9)$-level structure.  Here $\Gamma_1(9)$ is defined in the same way as  \eqref{E:level structure example} but with the lower right entry of the last factor replaced by $1+9\ZZ_3$.  As a result, Jacobs had to go through an additional factorization to get the same answer.}
\end{equation}

We point out that for our choice of $p=3$, this corresponds to $m = 2$ in Theorem~\ref{T:theorem B}; so it is not literally covered by it.

\begin{notation}
Let $\nu_3$ denote the square root of $-2$ that is congruent to $1$ modulo $3$.  We have a $3$-adic expansion
\[
\nu_3 = 1+ 3+2 \cdot 3^2 + 2 \cdot 3^5 + 3^7 + \cdots.
\]
We choose the isomorphism between $D \otimes \QQ_3$ and $\rmM_2(\QQ_3)$ so that
\[
1 \leftrightarrow \begin{pmatrix}
1 &0 \\ 0 & 1
\end{pmatrix}, \quad \i \leftrightarrow
\begin{pmatrix}
\nu_3 & 1\\ 1& -\nu_3
\end{pmatrix}, \quad
\j \leftrightarrow \begin{pmatrix}
0 & -1 \\ 1 & 0
\end{pmatrix}, \textrm{ and } \k \leftrightarrow \begin{pmatrix}
1 & -\nu_3 \\ -\nu_3 &- 1
\end{pmatrix}
.
\]
\end{notation}

\begin{lemma}
\label{L:honest decomposition V(9)}
The following natural map is bijective.\footnote{In \cite{jacobs}, $D^\times_f$ is written as the disjoint union of three double cosets, which in fact corresponds to the double coset decomposition of $U$ over $\Gamma_1(9)$.}
\[
\xymatrix@R=0pt{
D^ \times \times  U \ar[r] & D_f^\times\\
(\delta, u) \ar@{|->}[r] & \delta u.
}
\]
\end{lemma}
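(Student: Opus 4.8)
The plan is to reduce the bijectivity to a finite computation at the prime $3$, using the classical fact that the Hurwitz maximal order $\calO_D$ has class number one. Concretely, with respect to the maximal level $\widehat U := D^\times(\ZZ_2)\times\prod_{l\neq 2}\GL_2(\ZZ_l)=(\calO_D\otimes\widehat\ZZ)^\times$ one has $D_f^\times=D^\times\cdot\widehat U$. I would justify this either by citing the standard tables, or via strong approximation for $\SL_1(D)$ (valid at the split place $3$, where $\SL_1(D)(\QQ_3)\cong\SL_2(\QQ_3)$ is noncompact) together with the surjectivity of the reduced norm $\Nm\colon\widehat U\twoheadrightarrow\widehat\ZZ^\times$, which forces $\QQ_{>0}^\times\backslash\AAA_f^\times/\Nm(\widehat U)$ to be trivial. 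I would also record, using only $\nu_3\in\ZZ_3^\times$ and $2\in\ZZ_3^\times$ to check the images of $\i,\j,\k$ and of $\tfrac12(1+\i+\j+\k)$, that the chosen isomorphism $D\otimes\QQ_3\cong\rmM_2(\QQ_3)$ carries $\calO_D\otimes\ZZ_3$ into $\rmM_2(\ZZ_3)$, so that the $24$-element group $\calO_D^\times$ embeds into $\GL_2(\ZZ_3)$.

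Next I would perform the reduction. Write $U_3=\big(\begin{smallmatrix}\ZZ_3^\times&\ZZ_3\\9\ZZ_3&1+3\ZZ_3\end{smallmatrix}\big)$ for the component of $U$ at $3$; the groups $U$ and $\widehat U$ agree away from $3$, and $U_3\subseteq\GL_2(\ZZ_3)=\widehat U_3$. Using $D_f^\times=D^\times\widehat U$, a short manipulation shows that $(\delta,u)\mapsto\delta u$ is surjective exactly when $\calO_D^\times$ surjects onto $\GL_2(\ZZ_3)/U_3$, and injective exactly when $D^\times\cap U=\calO_D^\times\cap U_3=\{1\}$ (the intersection taken inside $\GL_2(\ZZ_3)$). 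Hence the lemma is equivalent to $\calO_D^\times\to\GL_2(\ZZ_3)/U_3$ being a bijection. I would then compute $[\GL_2(\ZZ_3):U_3]$: since $U_3$ contains the principal congruence subgroup mod $9$, reduction mod $9$ identifies $U_3$ with the upper-triangular subgroup of $\GL_2(\ZZ/9)$ with diagonal in $(\ZZ/9)^\times\times(1+3\ZZ/9)$, of order $6\cdot9\cdot3=162$, so the index equals $|\GL_2(\ZZ/9)|/162=3888/162=24=|\calO_D^\times|$. Since the source and target of $\calO_D^\times\to\GL_2(\ZZ_3)/U_3$ are both finite of size $24$, it now suffices to prove injectivity, i.e. $\calO_D^\times\cap U_3=\{1\}$.

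Finally I would verify $\calO_D^\times\cap U_3=\{1\}$ by running through the $24$ units $\pm1,\pm\i,\pm\j,\pm\k,\tfrac12(\pm1\pm\i\pm\j\pm\k)$. From the given matrices the lower-left entries of $\i,\j,\k$ are $1,1,-\nu_3$, so the lower-left entry of any non-central unit is $\pm1$, $\pm\nu_3$, or $\tfrac12(\pm1\pm1\pm\nu_3)$ with independent signs; using $\nu_3\equiv1\pmod3$ one checks that none of these is divisible by $9$, so the unit fails the condition $9\,|\,c$ defining $U_3$. The only remaining units are $\pm1\mapsto\pm I$, and $-I$ violates the condition $d\in1+3\ZZ_3$ since $-1\notin1+3\ZZ_3$; thus only $1$ survives. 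The computations in this last step are elementary; the only (mild) obstacle is arranging the reduction so that, thanks to the count $24=24$, a bare injectivity check finishes the proof, after which everything is mechanical.
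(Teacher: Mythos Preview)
Your argument is correct and follows the same route as the paper's proof: use the class-number-one fact $D_f^\times = D^\times\cdot\widehat U$ to reduce to showing that $\calO_D^\times$ maps bijectively onto $\GL_2(\ZZ_3)/U_3$, and then verify this directly (the paper simply cites the list of residues of $\calO_D^\times$ modulo $9$, while you do the equivalent index count $24=24$ plus an injectivity check). One small slip in the execution: knowing only $\nu_3\equiv 1\pmod 3$ is not enough to conclude $9\nmid\tfrac12(\pm1\pm1\pm\nu_3)$ in every case --- for instance $\tfrac12(1+1+\nu_3)$ is visibly divisible by $3$ --- so you must use the finer congruence $\nu_3\equiv 4\pmod 9$ (immediate from the given $3$-adic expansion $\nu_3=1+3+2\cdot3^2+\cdots$) to see that these entries are $\equiv\pm3\pmod 9$ and finish those cases.
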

\begin{proof}
This is of course coincidental for our choices of $D$, $p$ and $U$.  We first observe that $D_f^\times = D ^\times \cdot U_\mathrm{max}$ (see \cite[Lemma~1.22]{jacobs}), where $U_\mathrm{max}$ is a maximal open compact subgroup of $(D\otimes \AAA_f)^\times$, defined using the same equation as in \eqref{E:level structure example} except the factor at $3$ is replaced by $\GL_2(\ZZ_3)$.  Taking into account of the duplication, we have
\[
D_f^\times = D ^\times \times_{\calO_D^\times}  U_\mathrm{max}.
\]
So it suffices to check that the image of $\calO_D^\times$ in $\GL_2(\ZZ_3)$ turns out to form a coset representative of $ U_\mathrm{max} / U$.
This can be checked easily by hand.  (See the proof of \cite[Theorem~2.1]{jacobs} for the list of residues of $\calO_D^\times$ when taking modulo $9$.)
\end{proof}

\begin{corollary}
Let $\psi$ be a continuous character of $\ZZ_3^\times$ of conductor $9$ such that $\psi(-1)=1$ and let $\kappa = x \langle x\rangle ^w\psi$ with $w\in \calO_{\CC_3}$ be a character considered in Example~\ref{Ex: characters on weight space}.
Then evaluation at $1$ induces an isomorphism $S^{D,\dagger}(U; \kappa) \cong \calA$.
\end{corollary}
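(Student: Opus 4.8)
The plan is to combine the honest double coset decomposition of Lemma~\ref{L:honest decomposition V(9)} with the general description of $S^{D,\dagger}(U;\kappa)$ from Lemma~\ref{L:explicit space of automorphic forms}. Since Lemma~\ref{L:honest decomposition V(9)} shows that $D_f^\times = D^\times \cdot U$ is a \emph{single} honest double coset, we are exactly in the situation of Notation~\ref{N:representatives} with $t=1$ and $\gamma_0 = 1$. So the first step is simply to invoke Lemma~\ref{L:explicit space of automorphic forms} with this single representative $\gamma_0=1$, which immediately gives an $E$-linear (indeed $A$-linear, but here $A=E$) isomorphism
\[
S^{D,\dagger}(U;\kappa) \xrightarrow{\ \cong\ } \calA, \qquad \varphi \longmapsto \varphi(1).
\]

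Before applying Lemma~\ref{L:explicit space of automorphic forms} one should check that $\kappa = x\langle x\rangle^w \psi$ indeed defines a point of $\calW^{<r_m}$ for the relevant $m$: here $p=3$, $m=2$, $r_m = r_2 = p^{-1/(p-1)} = 3^{-1/2}$, and by Example~\ref{Ex: characters on weight space} the character $x\langle x\rangle^w\psi$ with $\psi$ of conductor $9 = 3^2$ has $T$-coordinate of norm $3^{-1/3^{0}(3-1)} = 3^{-1/2} = r_2$. Strictly this is on the boundary circle $|T| = r_m$ rather than in the open disk $|T|<r_m$; one should note that the constructions of Section~\ref{S:quaternionic forms} in fact apply on the closed disk $\calW^{\leq r_m}$ as well (cf.\ the discussion around \eqref{E:extend kappa A}, where the convergence condition is $|\kappa(\exp(2p))-1| < r_m$ or $\leq r_m$ depending on normalization, and Example~\ref{Ex:classical weight} which explicitly places such $\kappa$ in $\calW^{\leq r_m}$). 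So a remark is needed to the effect that $S^{D,\dagger}(U;\kappa)$ makes sense for this $\kappa$, and then Lemma~\ref{L:explicit space of automorphic forms} applies verbatim.

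The one genuine point to verify is the hypothesis \eqref{E:buzzard condition}, i.e.\ that $U$ is sufficiently small so that $x^{-1}D^\times x \cap U = \{1\}$ for all $x \in D_f^\times$; this is what makes the double coset decomposition honest and is implicitly used in Lemma~\ref{L:honest decomposition V(9)} (the bijectivity of $D^\times \times U \to D_f^\times$ forces $\calO_D^\times$ to inject into the relevant finite quotient, equivalently the global units other than $1$ are killed by the level at $3$). The residue computation modulo $9$ cited in the proof of Lemma~\ref{L:honest decomposition V(9)} (from \cite[Theorem~2.1]{jacobs}) shows that no nontrivial element of $\calO_D^\times$ lies in $U_0$-type congruence subgroup at $3$ defining $U$, which gives \eqref{E:buzzard condition}. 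So the corollary is essentially a restatement of Lemma~\ref{L:honest decomposition V(9)} through the dictionary of Lemma~\ref{L:explicit space of automorphic forms}, with $t=1$.

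The \textbf{main obstacle} is really just bookkeeping rather than mathematics: one must make sure the weight character $\kappa = x\langle x\rangle^w\psi$ with $w \in \calO_{\CC_3}$ and $\psi$ of conductor $9$ lies in the region of the weight space over which $S^{D,\dagger}(U;\kappa)$ has been defined (the closed disk $\calW^{\leq r_2}$, as just discussed), and that the coefficient field $E$ has been enlarged to contain $\QQ_3(\zeta_9)$ and the values of $w$ — but these are exactly the standing conventions of Section~\ref{Section:automorphic forms}. Granting that, the proof is immediate: apply Lemma~\ref{L:explicit space of automorphic forms} with the single representative $\gamma_0 = 1$ supplied by Lemma~\ref{L:honest decomposition V(9)}.
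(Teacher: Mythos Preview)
Your proposal is correct and matches the paper's intent: the corollary is stated without proof precisely because it is the immediate combination of Lemma~\ref{L:honest decomposition V(9)} (giving $t=1$, $\gamma_0=1$) with Lemma~\ref{L:explicit space of automorphic forms}. Your additional bookkeeping about the weight disk boundary and hypothesis~\eqref{E:buzzard condition} is accurate and more careful than the paper itself, though note that $\psi$ of conductor $9$ with $\psi(-1)=1$ has order $3$, so $E\supseteq\QQ_3(\zeta_3)$ suffices rather than $\QQ_3(\zeta_9)$.
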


\begin{lemma}\label{lemma1}
For the case considered in this section, the map $\gothU_3$ in Proposition~\ref{P:explicit Up} is given by
$\gothU_3 = ||_\kappa \delta_1 + ||_\kappa \delta_2 + ||_\kappa \delta_3$, where
\[
\delta_1 =
-1 + \i - \j, \ \delta_2= \tfrac12(1+\i+3\j+\k),\ \textrm{and }\delta_3= \tfrac12(1-3\i-\j-\k).
\]
The images of $\delta_1, \delta_2,\delta_3$ in $\GL_2(\ZZ_3)$ are given by
\[
\begin{pmatrix}
\nu_3-1 & 2 \\ 0 & -1- \nu_3
\end{pmatrix}, \quad
\begin{pmatrix}
1+\frac{\nu_3}2 & -1-\frac{\nu_3}{2} \\ 2-\frac{\nu_3}{2} & -\frac{\nu_3}2
\end{pmatrix}, \quad \textrm{and }
\begin{pmatrix}
-\frac{3\nu_3}{2} & -1+\frac{\nu_3}{2} \\ -2+\frac{\nu_3}{2} & 1+ \frac{3\nu_3}2
\end{pmatrix}.
\footnote{One compares these matrices with the ones appearing after \cite[Lemma~2.5]{jacobs}. Jacobs has a different normalizations which could be removed if one wishes.  Also, we think his matrices involving $v_1^{-1}$ are not correct; this error is however fixed on the next page of {\it loc. cit.}.}
\]
Modulo $9$, they are
\[
\begin{pmatrix}
           3 & 2 \\ 0 & 4
         \end{pmatrix}, \quad\begin{pmatrix}
          3 & 6 \\ 0 & 7
             \end{pmatrix}, \quad \textrm{and }\begin{pmatrix}
          3 & 1 \\ 0 & 7
             \end{pmatrix}.
\]
\end{lemma}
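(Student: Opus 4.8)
The plan is to deduce the lemma directly from Proposition~\ref{P:explicit Up} and then carry out a finite by-hand computation. By Lemma~\ref{L:honest decomposition V(9)} the double-coset decomposition of Notation~\ref{N:representatives} is honest with $t=1$, and we may take the single representative to be $\gamma_0=1$ (which has trivial $3$-component and reduced norm $1$, as Notation~\ref{N:representatives} requires). Feeding this into Proposition~\ref{P:explicit Up} with the Iwahori coset representatives $v_j=\big(\begin{smallmatrix}3&0\\9j&1\end{smallmatrix}\big)$, $j=0,1,2$, from Section~\ref{S:Hecke action}, we obtain $\gothU_3=\sum_{j=0}^{2}||_\kappa\delta_{j,p}$, where $\delta_j\in D^\times$ is the \emph{unique} global element of reduced norm $3$ with $\delta_j v_j^{-1}\in U$; moreover its $3$-component lies in $\big(\begin{smallmatrix}3\ZZ_3&\ZZ_3\\9\ZZ_3&\ZZ_3^\times\end{smallmatrix}\big)$ by Proposition~\ref{P:explicit Up}(3). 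So the entire content of the lemma is to exhibit these three elements, compute their images in $\GL_2(\ZZ_3)$, and reduce those modulo $9$.

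To locate the $\delta_j$, I would exploit the fact that every $\delta\in\calO_D$ of reduced norm $3$ is automatically a unit at each finite place $\ell\neq 3$, hence lies in $U_\ell$ there; thus the condition $\delta v_j^{-1}\in U$ is a condition purely at $p=3$. There are $24\cdot(1+3)=96$ elements of $\calO_D$ of norm $3$, falling into finitely many orbits under left multiplication by the order-$24$ group $\calO_D^\times$; for a set of orbit representatives one computes the image in $\GL_2(\QQ_3)$ under the fixed isomorphism $D\otimes\QQ_3\cong\rmM_2(\QQ_3)$ (using $\i,\j,\k\mapsto$ the displayed matrices, where $\nu_3=\sqrt{-2}\equiv 1\bmod 3$) and tests which three meet the cosets $Uv_0,Uv_1,Uv_2$. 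This short search produces $\delta_1=-1+\i-\j$, $\delta_2=\tfrac12(1+\i+3\j+\k)$, $\delta_3=\tfrac12(1-3\i-\j-\k)$, of reduced norms $1+1+1$, $\tfrac14(1+1+9+1)$ and $\tfrac14(1+9+1+1)$, all equal to $3$.

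The verification is then mechanical. Substituting the matrices for $1,\i,\j,\k$ into the three expressions produces exactly the displayed $\GL_2(\ZZ_3)$-matrices; each has determinant equal to its reduced norm $3$, which one checks using $\nu_3^2=-2$ (for instance $\det\big(\begin{smallmatrix}\nu_3-1&2\\0&-1-\nu_3\end{smallmatrix}\big)=-(\nu_3^2-1)=3$). Next one confirms $\delta_j v_j^{-1}\in U$ with the matching $\delta_1\leftrightarrow v_0$, $\delta_2\leftrightarrow v_2$, $\delta_3\leftrightarrow v_1$: concretely, right-multiplying the displayed matrix by $\big(\begin{smallmatrix}1/3&0\\-3j&1\end{smallmatrix}\big)$ one checks that the lower-left entry lands in $9\ZZ_3$ and the lower-right entry in $1+3\ZZ_3$. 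This is the one place the computation genuinely uses the $3$-adic expansion of $\nu_3$ recorded above (in fact $\nu_3\equiv 22\bmod 27$ suffices). Finally, reducing the three matrices modulo $9$ via $\nu_3\equiv 4\bmod 9$ and $2^{-1}\equiv 5\bmod 9$ yields the last display; since the three reductions are pairwise distinct and the correspondence $\delta_j\leftrightarrow v_j$ is a bijection, these are indeed all three terms of $\gothU_3$.

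The only real obstacle is bookkeeping: enumerating the norm-$3$ quaternions and carrying the expansion of $\nu_3$ to enough $3$-adic precision to certify the three membership conditions $\delta_j v_j^{-1}\in U$ (equivalently, the index matching) and the final reductions. Everything else is routine. One can shortcut the search: since $\{v_0,v_1,v_2\}$ represents the relevant left cosets and the $D^\times$-$U$ decomposition of $D_f^\times$ is honest (Lemma~\ref{L:honest decomposition V(9)}), each coset $Uv_j$ contains a unique global norm-$3$ element, so it is enough to simply guess the three elements from a small table of norm-$3$ quaternions and let the verification do the rest — the displayed reductions modulo $9$, being pairwise distinct (note we must track the lower-right entry only modulo $3$, reflecting the index-$2$ difference between the $3$-component of $U$ and $U_0(9)$), then also confirm that no two of them coincide.
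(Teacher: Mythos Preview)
Your approach is correct and amounts to a guess--and--verify argument: having observed (via Proposition~\ref{P:explicit Up} and Lemma~\ref{L:honest decomposition V(9)}) that the three global elements $\delta_j$ are the unique norm-$3$ integral quaternions lying in $Uv_j$, and that integrality away from $3$ is automatic, you propose the three candidates and check directly that each satisfies $\delta_j v_j^{-1}\in U$ at $p=3$. The paper instead takes a more systematic route: it fixes once and for all the norm-$3$ element $1-\i+\j\in D^\times\cap U_1(3)\big(\begin{smallmatrix}3&0\\0&1\end{smallmatrix}\big)$ and writes $\delta_j=\delta'_j(1-\i+\j)$, thereby reducing the search from the $96$ norm-$3$ elements of $\calO_D$ to the $24$ units $\calO_D^\times$, which are then filtered by a single mod-$3$ congruence. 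Both arguments are perfectly valid for this finite computation; the paper's factorization trick is a bit more structural and would scale more gracefully to deeper level, while yours is more direct and requires no preliminary insight.

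One small remark: your intermediate suggestion to pass to left $\calO_D^\times$-orbit representatives of the norm-$3$ quaternions is not quite the right reduction, since left multiplication by a unit $\epsilon\in\calO_D^\times$ need not preserve the right coset $Uv_j$ (indeed $\epsilon$ generally fails to lie in the $3$-component of $U$). This does not affect the validity of your proof, because you ultimately rely on the explicit verification of the three named candidates, and uniqueness from the honest decomposition then finishes the argument.
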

\begin{proof}
We follow the computation in Proposition~\ref{P:explicit Up}.
We need to compute
\[
U_3(\varphi)(1) = \sum_{j=1}^3 \varphi(v_j^{-1})||_\kappa v_j, \quad \textrm{for }v_j = \big(
\begin{smallmatrix}
3&0\\ 9j &1
\end{smallmatrix}
\big)
\]
By Lemma~\ref{L:honest decomposition V(9)}, we can write each $v_j^{-1}$ uniquely as $\delta_j^{-1} u_j$ for $\delta_j \in D^\times$ and $u_j \in U$.
Then
\[
\varphi(v_j^{-1})||_\kappa v_j = \varphi(1)||_\kappa (u_{j,3}v_j) = \varphi(1)||_\kappa \delta_{j,3},
\]
where $u_{j,3}$ and $\delta_{j,3}$ denote the $3$-components of $u_j$ and $\delta_j$, respectively.
On the other hand, we have
\[
\delta_j = u_jv_j \in D^\times \cap U v_j \subseteq D^\times\cap U\big( \begin{smallmatrix}
3 & 0 \\ 0 & 1
\end{smallmatrix}\big) U = D^\times \cap U_1(3) \big( \begin{smallmatrix}
3 & 0 \\ 0 & 1
\end{smallmatrix}\big),
\]
where $U_1(3)$ is defined as $U$ in \eqref{E:level structure example} except the last factor is replaced by $\big(
\begin{smallmatrix}
\ZZ_3^\times & \ZZ_3 \\ 3 \ZZ_3 & 1+ 3\ZZ_3
\end{smallmatrix}
\big)$.
If we put $\delta_j = \delta'_j (1 - \i + \j)$, then we have
\begin{align}
\nonumber
\delta'_j &\in D^\times \cap  U_1(3)\big(\begin{smallmatrix}
3 & 0 \\ 0 & 1
\end{smallmatrix}\big)(1-\i +\j)^{-1}  =  D^\times \cap U_1(3) \big(\begin{smallmatrix}
1+ \nu_3 & 2 \\ 0 & (1-\nu_3)/3
\end{smallmatrix}\big)\\
\nonumber
&=D^\times \cap U_1(3) \big(\begin{smallmatrix}
5 & 2 \\ 0 & 2
\end{smallmatrix}\big) = \big\{
-1, \ \tfrac12(1+\i+\j-\k),\ \tfrac12(1-\i-\j+\k)
\big\}
\end{align}
The last equality follows from looking at the list of $\calO_D^\times$ modulo $3$.  (In the notation of Jacobs' thesis \cite{jacobs}, this set is $\{-1, u_5, -u_8\}$.)

It is then clear that all $\delta_j$'s are among the collections of the above right-multiplied by $1-\i+\j$.  The rest of the lemma is straightforward.
\end{proof}

The main theorem of Jacobs' thesis \cite{jacobs} is the following.

\begin{theorem}[Jacobs]
Let $\psi$ be  a character of $\ZZ_3^\times$ of conductor $9$ such that $\psi(-1)=1$.
We consider the characters $\kappa = x\langle x\rangle ^w\psi$ ($w \in \calO_{\CC_3}$) as in Example~\ref{Ex:classical weight}.
The slopes of the $U_3$-operator acting  on $S^{D,\dagger}(U; \kappa)$ are $\frac{1}{2},1+\frac{1}{2},2+\frac{1}{2},3+\frac{1}{2},\dots$.
\end{theorem}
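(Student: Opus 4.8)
The plan is to reduce the statement to a Newton polygon computation for the single infinite matrix $\gothU_3$ of Lemma~\ref{lemma1} and then carry this out directly, by hand, via the generating function of Proposition~\ref{P:generating series}.

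First, by the corollary stated just before Lemma~\ref{lemma1}, evaluation at $1$ identifies $S^{D,\dagger}(U;\kappa)$ with the Tate algebra $\calA = E\langle z\rangle$, with basis $1,z,z^2,\dots$, and under this identification $U_3$ becomes $\gothU_3 = ||_\kappa\delta_1 + ||_\kappa\delta_2 + ||_\kappa\delta_3$. Thus it suffices to locate the Newton polygon of $\Char(U_3; S^{D,\dagger}(U;\kappa))(X) = \det(1-X\gothU_3) = \sum_{n\ge 0} c_n X^n$. By Proposition~\ref{P:generating series} the generating series of $\gothU_3$ is
\[
H(x,y) = \sum_{j=1}^{3}\frac{\kappa(c_j x + d_j)}{c_j x + d_j - a_j x y - b_j y},
\]
where $\big(\begin{smallmatrix}a_j & b_j\\ c_j & d_j\end{smallmatrix}\big)$ is the image of $\delta_j$ in $\GL_2(\ZZ_3)$, and Lemma~\ref{lemma1} gives $9\mid c_j$, $a_j\equiv 3\bmod 9$, $d_1\equiv 4$, $d_2\equiv d_3\equiv 7\bmod 9$, with each $d_j$ a unit.

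Next I would read off the valuations of the entries $m_{l,i} = [x^l y^i]H(x,y)$. The three inputs are: (i) $9\mid c_j$, so each power of $x$ coming from $\kappa(c_jx+d_j)$ or from $(c_jx+d_j)^{-i-1}$ carries valuation $\ge 2$ per unit of $x$-degree; (ii) $v_3(a_j)=1$; and (iii) since $\kappa = x\langle x\rangle^w\psi$ and the $p$-adic logarithm kills roots of unity, one finds $\kappa'(1) = 1+w$, which forces the lowest-order corrections from the ``unit part'' $\kappa(c_jx+d_j)/(c_jx+d_j)^{i+1}$ to have valuation $\ge 2$. Feeding in the conductor-$9$ hypothesis — so that $\psi(4),\psi(7)$ are the two primitive cube roots of unity, whence $\sum_j\psi(d_j) = \psi(d_1)+2\psi(d_2)$ and the sums $\psi(d_1)\pm\psi(d_3)$ all have valuation exactly $\tfrac12$ — I expect to obtain: $v(m_{l,i})\ge l$ always (recovering Theorem~\ref{T:weak Hodge polygon} for $t=1$), $v(m_{l,i}) > l$ whenever $l>i$ (indeed $v(m_{l,i})\ge 2l-i$ there), and, crucially, $v(m_{i,i}) = i+\tfrac12$ \emph{exactly}. (Note that some $m_{0,i}$ have valuation $0$, so the Hodge polygon stays strictly below the answer: this forces a genuine Newton-polygon computation rather than a Hodge bound.)

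Finally I would feed this into a combinatorial estimate on $c_n = (-1)^n\sum_{\#I=n}\det(M_I)$, the signed sum of principal $n\times n$ minors of the nuclear operator $\gothU_3$. In $\det(M_I)$ for $I=\{l_0<\cdots<l_{n-1}\}$, the diagonal term $\prod_a m_{l_a,l_a}$ has valuation $\sum_a(l_a+\tfrac12)$; any other permutation contains a nontrivial cycle, and traversing such a cycle there is a descent $l_{a_s}>l_{a_{s+1}}$, where the bound $v(m_{l_{a_s},l_{a_{s+1}}})\ge 2l_{a_s}-l_{a_{s+1}}$ supplies enough extra valuation that the whole term strictly exceeds $\sum_a(l_a+\tfrac12)$. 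Since $\sum_a(l_a+\tfrac12)$ is minimized, uniquely, over $n$-subsets by $I=\{0,1,\dots,n-1\}$, the unique term of least valuation in $c_n$ is $\prod_{i=0}^{n-1}m_{i,i}$, of valuation $\sum_{i=0}^{n-1}(i+\tfrac12)=\tfrac{n^2}{2}$; there is no cancellation, so $v(c_n)=\tfrac{n^2}{2}$ for every $n\ge 1$. Hence the Newton polygon of $\sum c_nX^n$ has vertices $(n,\tfrac{n^2}{2})$ and slopes $\tfrac12,\tfrac32,\tfrac52,\dots$, which are the $U_3$-slopes. The main obstacle is the valuation bookkeeping in the middle step — in particular certifying that $v(m_{i,i}) = i+\tfrac12$ and not something larger, which is where the arithmetic of $\psi$ modulo $9$ and the identity $\kappa'(1)=1+w$ genuinely enter — together with verifying that the descent bound in the last step is strict for short cycles (lengths $2$ and $3$) and not only for long ones. (As a sanity check on the value $\tfrac12$: the weight-$2$ slope equals $\tfrac12$ independently, via the pairing $\langle U_pf,U_pg\rangle = p\langle S_pf,g\rangle$ of Section~\ref{Section:improve Hodge polygon} applied to $S_2^D(U;\psi)$ and $S_2^D(U;\psi^{-1})$.)
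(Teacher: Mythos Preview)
Your strategy—entrywise estimates on $\gothU_3$ followed by a permutation-by-permutation analysis of principal minors—is sound in outline, and your diagonal claim $v(m_{i,i})=i+\tfrac12$ is correct. But the off-diagonal bounds (a) and (b) as you state them are not strong enough to close the argument. For the transposition $(i,i+1)$ inside $I=\{0,\dots,n-1\}$, your bounds give
\[
v(m_{i,i+1})+v(m_{i+1,i})\ \ge\ i+\bigl(2(i+1)-i\bigr)\ =\ 2i+2,
\]
which \emph{equals} the diagonal contribution $(i+\tfrac12)+(i+\tfrac32)$ rather than strictly exceeding it; so you cannot rule out cancellation against the diagonal term, and hence cannot conclude $v(c_n)=\tfrac{n^2}{2}$. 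You flag this short-cycle case as an ``obstacle'' but do not resolve it. Resolving it requires squeezing an extra half-unit of valuation out of the character sum inside the near-diagonal entries (not just the diagonal ones), which is more delicate than the valuation bookkeeping you describe.

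The paper sidesteps all of this with a single rescaling. Setting $\xi=\psi(4)$ and $\pi=\xi-1$ (so $v(\pi)=\tfrac12$), it computes the generating series after the substitution $x\mapsto \tfrac{1}{3\pi}x$, $y\mapsto \pi y$ and finds
\[
H_{\gothU_3}\!\Big(\tfrac{1}{3\pi}x,\ \pi y\Big)\ \equiv\ \frac{2\pi}{1-xy}\pmod 3,
\]
i.e.\ $\Diag(\tfrac{1}{3\pi})\,\gothU_3\,\Diag(\pi)\equiv 2\pi\,I_\infty \pmod 3$. This single congruence packages all the character-sum cancellations at once: for any finite $I$, the principal minor of $\gothU_3$ on $I$ equals $3^{\sum_{i\in I}i}$ times that of the rescaled matrix, and the latter is $(2\pi)^{|I|}$ times a $1$-unit, hence has valuation exactly $|I|/2$. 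Thus $v(c_n)=\tfrac{n^2}{2}$ with no permutation analysis needed. The diagonal rescaling by $\bigl(\tfrac{1}{3\pi},\pi\bigr)$ is precisely the missing idea in your approach.
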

\begin{proof}
Put $\xi = \psi(4)$; it is a primitive third root of unity.  Then $\psi(7) = \xi^2$.
Put $\pi = \xi-1$ so that $v(\pi) = \frac 12$.
Let $H_{\gothU_3}(x,y)$ denote the generating series of the Hecke operator acting on $S^{D,\dagger}(U; \kappa) \cong \calA$.
By Lemma~\ref{lemma1}, the map $\gothU_3$ is given as $
    \gothU_3 = ||_\kappa \delta_1 + ||_\kappa \delta_2 + ||_\kappa \delta_3$
for the elements $\delta_1, \delta_2, \delta_3$ given therein.
By Lemma~\ref{lemma1}, we have
\begin{align*}
 H_{\gothU_3}(\frac{1}{3\pi} x,\pi y) &\equiv
\frac{\psi(4)4^w}{4-3 \frac{1}{3\pi} x \cdot \pi x -2\pi y}+ \frac{\psi(7)7^w}{7-3  \frac{1}{3\pi}x \cdot \pi y-6 \pi y}
        + \frac{\psi(7)7^w}{7-3  \frac{1}{3\pi} x  \cdot\pi y-\pi y}
        \\
     & \equiv \frac{\xi}{4-xy-2\pi y}+ \frac{\xi^2}{7-xy-6\pi y}+ \frac{\xi^2}{7-xy-\pi y}
     \\
     &\equiv
 \frac{1+\pi}{1-xy+\pi y}+ \frac{(1+\pi)^2}{1-xy}+ \frac{(1+\pi)^2}{1-xy-\pi y} \pmod 3
\end{align*}
It is now straightforward to check that this is congruent to $\dfrac{2\pi}{1-xy}$ modulo $3$.
In other words, the matrix $\Diag(\frac{1}{3\pi}) \cdot \gothU_3 \cdot \Diag(\pi)$ is congruent modulo $3$ to $2 \pi \cdot  I_\infty$, where $I_\infty$ is the infinite identity matrix.
It follows from this easily that the slopes of the $U_3$-operator acting  on $S^{D,\dagger}(U; \kappa)$ are $\frac{1}{2},1+\frac{1}{2},2+\frac{1}{2},3+\frac{1}{2},\dots$.
\end{proof}

\section{Improving the lower bound}
\label{Section:improve Hodge polygon}

The key to obtain a strong result on $U_p$-slopes is to improve the lower bound in Theorem~\ref{T:weak Hodge polygon} so that it agrees with the Newton polygon for sufficiently many points.

\begin{hypothesis}
\label{H:psi conductor pm}
In this and the next section, we retain the notation from Section~\ref{Section:computation of Up} to work with a general definite quaternion algebra $D$ (which splits at $p$).
We fix an integer  $m \geq 4$.
By writing $\psi_m$, we always mean a finite continuous character of $\ZZ_p^\times$ of conductor $p^m$. Let $E$ be a finite extension of $\QQ_p(\zeta_{p^{m-1}})$.
The level structure at $p$ is always taken to be $U_0(p^m)$ with the same number $m$. Some of the results (perhaps after modification) may hold for smaller $m$; see Remark~\ref{R:m=3}.

We assume that $\psi(-1) = 1$.
\end{hypothesis}

\subsection{Facts about classical automorphic forms}
\label{S:classical automorphic forms}
To avoid future confusion, we must clarify how twisting an automorphic representation $\pi$ (of weight $2$) by a central character $\eta: (\ZZ_p/p^m\ZZ_p)^\times \to E^\times$ works, in an explicit way.
We consider the following space of classical automorphic forms
\begin{align}
\label{E:S2 classical twist}
S_2^D(U; \psi_m; \eta): = \big\{ \varphi: D^\times_f \to E \, \big| \, \varphi(\delta  gu&) =  \eta(ad) \psi_m(d) \varphi(g) \textrm{ for any }\delta \in D^\times,\\
\nonumber
& g \in D^\times_f, \textrm{and } u\in U \textrm{ with }u_p = \big(\begin{smallmatrix}
a&b\\c&d
\end{smallmatrix} \big) \big\}.
\end{align}
It carries an action of Hecke operators $T_l$ (for $l\notin \calS$) and $U_p$ just as defined in Subsection~\ref{S:Hecke action}, except multiplied by $\eta(l)$ and $1$, respectively.

Recall that (after making a finite extension of the coefficient field $E$), we have a decomposition of automorphic representations under the actions of all Hecke operators $T_l$ ($l \notin \calS$):
\begin{equation}
\label{E:spectral decomposition over C}
S_2^D(U; \psi_m; \eta) = \bigoplus_\pi V(\pi),
\end{equation}
where the sum is taken over all automorphic representations $\pi$ of $\GL_2(\AAA^\infty)$ of weight $2$.  We say that $\pi$ \emph{appears} in $S_2^D(U; \psi_m ; \eta)$ if the corresponding space $V(\pi) \neq 0$.

We may view $\eta$ as a Hecke character of $\AAA^\times$ via the identification
\[
\QQ^\times \Big\backslash \AAA^\times \Big/ \Big( \RR^\times_{>0}(1+p^m\ZZ_p)^\times\prod_{l \neq p} \ZZ_l^\times\Big) \cong (\ZZ_p/p^m\ZZ_p)^\times.
\]
Write $\pi \otimes (\eta \circ \det)$ for the tensor product of the automorphic representations of $\GL_2(\AAA)$; it has central character $\omega_\pi \eta^2$, where $\omega_\pi$ is the central character of $\pi$.
For each $l \notin \calS$, the $T_l$-eigenvalue on the spherical vector at $l$ for $\pi \otimes (\eta \circ \det)$ is $\eta(l)$ times that for $\pi$.

It is clear from this construction that $\pi$ appears in $S_2^D(U; \psi_m)$ if and only if $\pi \otimes (\eta \circ \det)$ appears in $S_2^D(U; \psi_m; \eta)$.
In fact we have a canonical isomorphism of modules of Hecke operators $T_l$ for $l \notin \calS$ and $U_p$
\begin{equation}
\label{E:untwist}
S_2^D(U; \psi_m) \otimes (\eta \circ \det) \cong S_2^D(U; \psi_m; \eta),
\end{equation}
where $T_l$ for $l \notin \calS$ acts on the factor $(\eta \circ \det)$ by multiplication by $\eta(l)$ and $U_p$ acts trivially.
We must point out that
$S_2^D(U; \psi_m; \eta)$ is \emph{genuinely different} from  $S_2^D(U; \psi_m \eta^2)$ (not even up to twists).

\begin{lemma}
\label{L:only new forms}
Assume Hypothesis~\ref{H:psi conductor pm}.
Then each Hecke eigenform in $S_2^D(U; \psi_m; \omega^r)$ is $p$-new, and the action of $U_p$ on each of $V(\pi)$ in \eqref{E:spectral decomposition over C} is just the scalar multiplication by some $a_p(\pi) \in E$.
Moreover, $v(a_p(\pi)) \in [0,1]$.
\end{lemma}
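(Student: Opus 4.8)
The plan is to establish the three assertions in order: $p$-newness of eigenforms, the scalar action of $U_p$ on each $V(\pi)$, and the valuation bound $v(a_p(\pi)) \in [0,1]$.

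For the first assertion, fix a Hecke eigenform $f \in S_2^D(U; \psi_m)$ generating an automorphic representation $\pi = \bigotimes_v \pi_v$. The local component $\pi_p$ is a representation of $\GL_2(\QQ_p)$ whose central character is $\psi_m$, which has conductor $p^m$ with $m \geq 4 \geq 2$; in particular the central character is ramified. A representation of $\GL_2(\QQ_p)$ with ramified central character cannot be an unramified twist of the Steinberg representation and cannot be unramified, so $\pi_p$ is either supercuspidal or a ramified principal series. In all these cases $\pi_p$ has no nonzero $\GL_2(\ZZ_p)$-fixed vector, so $f$ cannot come from level $\GL_2(\ZZ_p)$; since the level at $p$ is the Iwahori $U_0(p^m)$, this is precisely the statement that $f$ is $p$-new. (One should double-check the edge behavior: the point is only that $\pi_p$ is ramified enough to have no spherical vector, which is immediate from the ramified central character.)

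For the second assertion, one uses the local theory at $p$. The $U_p$-operator on the $U_0(p^m)$-fixed vectors of $\pi_p$ can be analyzed via the Iwahori–Hecke algebra, but more directly: the space of $U_0(p^m)$-invariants in $\pi_p$ with the prescribed action of the diagonal torus via $\psi_m$ is one-dimensional when $\pi_p$ is a ramified principal series $\mathrm{Ind}(\chi_1, \chi_2)$ with exactly one of $\chi_i$ unramified, and is likewise small in the supercuspidal case; on such a line $U_p$ necessarily acts by a scalar. Rather than case-check, I would phrase it as: the $U_p$-eigenvalues partition $V(\pi)$ into generalized eigenspaces, and by strong multiplicity one (for the tame Hecke action) plus the local newvector theory at $p$, the $U_0(p^m)$-new line in $\pi_p$ is one-dimensional, forcing $U_p$ to act as a single scalar $a_p(\pi)$ on all of $V(\pi)$. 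The main subtlety here is making the dimension count at $p$ clean and uniform across principal series and supercuspidal types; this is where I would be most careful.

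For the valuation bound, the clean argument is the one flagged in the introduction and in Section~\ref{Section:improve Hodge polygon}: use the summation/integration pairing $\langle \cdot, \cdot\rangle: S_2^D(U;\psi_m) \times S_2^D(U;\psi_m^{-1}) \to E$ together with the identity $\langle U_p(f), U_p(g)\rangle = p\,\langle S_p(f), g\rangle$, where $S_p$ is the central character action at $p$. Since $\psi_m$ is a finite character, $S_p$ acts by a root of unity, hence by a unit; so if $f$ is a $U_p$-eigenform with eigenvalue $a_p(\pi)$ and $g$ a suitable dual eigenform with nonzero pairing, one gets $a_p(\pi)\overline{a_p(\pi)}\langle f,g\rangle$ (appropriate conjugate eigenvalue) equal to $p$ times a unit times $\langle f,g\rangle$, giving $v(a_p(\pi)) + v(\overline{a_p(\pi)}) = 1$. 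Combined with the lower bound $v(a_p(\pi)) \geq 0$ — which follows because $U_p$ preserves the integral lattice $S_2^D(U;\psi_m;\calO)$ (its matrix entries are operators $\|_\kappa\delta_p$ with $\delta_p \in \left(\begin{smallmatrix} p\ZZ_p & \ZZ_p \\ p^m\ZZ_p & \ZZ_p^\times\end{smallmatrix}\right)$ by Proposition~\ref{P:explicit Up}, in weight $2$ acting by $\psi_m(d)$, hence integrally) — and the symmetric bound for the dual eigenvalue, one concludes $v(a_p(\pi)) \in [0,1]$. The one gap to fill carefully is checking that the pairing is nondegenerate on the relevant generalized eigenspaces so that a dual eigenform with nonzero pairing exists; this follows from the nondegeneracy of $\langle\cdot,\cdot\rangle$ on $S_2^D$ and the fact that distinct Hecke eigensystems are orthogonal.
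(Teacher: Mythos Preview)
Your argument for the first two assertions is along the right lines but less sharp than the paper's. The paper observes that since $\psi_m$ has conductor exactly $p^m$ and the level at $p$ is $U_0(p^m)$, the local component $\pi_p$ is \emph{forced to be a principal series}: one has $\mathrm{cond}(\pi_p) \le p^m$ from the existence of a $U_1(p^m)$-fixed vector, while $\mathrm{cond}(\pi_p) \ge \mathrm{cond}(\omega_{\pi_p}) = p^m$, so $\mathrm{cond}(\pi_p) = p^m$; but a special or supercuspidal representation of conductor $p^m$ has central character of conductor strictly less than $p^m$. This immediately gives the one-dimensionality of $\pi_p^{U_1(p^m)}$ (the new line) and hence the scalar action of $U_p$, with no case analysis needed for supercuspidals. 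Your version only shows $\pi_p$ is not spherical, which is weaker than ``$p$-new at level $p^m$''; and your hedge about the supercuspidal case is unnecessary once one sees they cannot occur.

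For the valuation bound your approach is genuinely different from the paper's and is worth noting. The paper simply invokes admissibility at $p$ of the Galois representation attached to $\pi$, which is a deep input from $p$-adic Hodge theory. Your argument instead uses the pairing identity $\langle U_p f, U_p g\rangle = p\langle S_p f, g\rangle$ (Proposition~\ref{P:Up pair to p}) together with the integrality of $U_p$ on both $S_2^D(U;\psi_m)$ and $S_2^D(U;\psi_m^{-1})$ to deduce $v(a_p(\pi)) + v(a_p(\pi^\vee)) = 1$ and hence $v(a_p(\pi)) \in [0,1]$. This is correct and more elementary; it is essentially the eigenform-level version of what the paper later packages as Corollary~\ref{C:Hodge polygon less eq 1}. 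Two small points: your claim that $S_p$ acts by a root of unity is right, but the reason is that the \emph{global} central character of a weight-$2$ automorphic representation on a definite quaternion algebra has finite order, not merely that $\psi_m$ is finite; and the nondegeneracy of the pairing on $V(\pi)\times V(\pi^\vee)$ follows as you say. Note that in the paper's order Proposition~\ref{P:Up pair to p} comes after this lemma, but there is no circularity since its proof does not use the lemma.
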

\begin{proof}
The isomorphism~(\ref{E:untwist}) allows us to assume $r=0$ in \eqref{E:spectral decomposition over C}.
The condition on $\psi_m$ and the level structure ensures that the $p$-component $\pi_p$ of $\pi$ is forced to be a principal series, and has only one-dimensional fixed vector under the action of the group $U_1(p^m) = \big(\begin{smallmatrix} \ZZ_p^\times& \ZZ_p\\p^{m}\ZZ_p&1+p^m\ZZ_p \end{smallmatrix}\big)$.
So $U_p$ acts on $V(\pi)$ in the same way as $U_p$ acts on this one-dimensional fixed vector, by multiplication of some $a_p(\pi)\in E$.
 The norm bound on $v(a_p(\pi))$ follows from the admissibility at $p$ of the Galois representation attached to $\pi$.
\end{proof}

\subsection{Atkin--Lehner involution}
\label{S:Atkin-Lehner}
Recall that weight $2$ automorphic forms are simply functions on $D_f^\times$.
Similar to the case of classical modular forms, we have the following Atkin--Lehner involution map
\begin{equation}
\label{E:Atkin-Lehner}
\xymatrix@R=0pt{
\AL_{\psi_m}: S_2^D(U; \psi_m) \ar[r] & S_2^D(U; \psi_m^{-1}; \psi_m)
\\
\varphi \ar@{|->}[r] & \varphi\big(\bullet \Matrix 01{p^m}0\big).
}
\end{equation}
One checks that the condition
\[
\varphi(gu) = \psi_m(d) \varphi(g) \quad \textrm{ for } u\in U \textrm{ with } u_p = \Matrix abcd
\]
is equivalent to

\[
\varphi\big(gu \Matrix 01{p^m}0 \big) = \psi_m(a) \varphi\big(g\Matrix 01{p^m}0 \big) \quad \textrm{ for } u\in U \textrm{ with } u_p = \Matrix abcd.
\]
This justifies the source and the target of $\AL_{\psi_m}$. It is clear that $\AL_{\psi_m}$ is an isomorphism, preserving the obvious $\calO$-lattice, given by evaluation at the fixed coset representatives $\gamma_i$'s.

\begin{proposition}
\label{P:Up pair to p}
Keep the notation as above.  For $\varphi \in S_2^D(U; \psi_m)$, we have
\begin{equation}
\label{E:Up pair to p}
U_p \circ \AL_{\psi_m} \circ U_p (\varphi) =p \cdot  \AL_{\psi_m} \circ S_p(\varphi),
\end{equation}
where $S_p$ is the automorphism  $\varphi \mapsto \varphi( \bullet \big(\begin{smallmatrix}p^{-1}&0\\0&p^{-1}\end{smallmatrix}\big)\,)$ of $S_2^D(U, \psi_m)$ given by shifting the variable by an idele at $p$.
\end{proposition}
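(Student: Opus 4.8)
The idea is to realize the left-hand side of \eqref{E:Up pair to p} via an adjointness property of $U_p$ with respect to the pairing \eqref{E:S2 pairing}, and then to evaluate a composition of two Hecke operators at $p$ whose value is governed by a vanishing character sum.

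First I would set up the companion operator. Write $U_p$ as the Hecke operator attached to $U_0(p^m)\big(\begin{smallmatrix}p&0\\0&1\end{smallmatrix}\big)U_0(p^m)$ with the coset representatives $v_i=\big(\begin{smallmatrix}p&0\\ip^m&1\end{smallmatrix}\big)$ of \eqref{E:defn of U_p}, and let $\widetilde U_p$ be the companion operator attached to the transpose double coset $U_0(p^m)\big(\begin{smallmatrix}1&0\\0&p\end{smallmatrix}\big)U_0(p^m)$, acting on representatives by $\varphi\mapsto\sum_{j=0}^{p-1}\varphi(\bullet\, h_j^{-1})$ with $h_j=\big(\begin{smallmatrix}1&j\\0&p\end{smallmatrix}\big)$. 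A direct reindexing of the finite sum in \eqref{E:S2 pairing} gives the adjointness $\langle\widetilde U_p\chi,\varphi'\rangle=\langle\chi,U_p\varphi'\rangle$ for $\chi\in S_2^D(U;\psi_m)$ and $\varphi'\in S_2^D(U;\psi_m^{-1})$ (this is the standard Hecke–transpose adjointness; in the present explicit setting it is a bijection of double-coset sums, and one must check the characters $\psi_m,\psi_m^{-1}$ on the two factors cancel, possibly at the cost of a twist by a power of the central operator $S_p$). Granting this, $\langle U_p\varphi,U_p\varphi'\rangle=\langle\widetilde U_p(U_p\varphi),\varphi'\rangle=\langle(\widetilde U_p\circ U_p)\varphi,\varphi'\rangle$, so everything reduces to identifying the operator $\widetilde U_p\circ U_p$.

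The key computation is that $\widetilde U_p\circ U_p=p\,S_p$ on $S_2^D(U;\psi_m)$. Writing it out, $(\widetilde U_p\circ U_p)\varphi$ is a sum over the $p^2$ matrices $v_ih_j$, and one computes $v_ih_j=\big(\begin{smallmatrix}p&0\\0&p\end{smallmatrix}\big)A_{ij}$ with $A_{ij}=\big(\begin{smallmatrix}1&j\\ip^{m-1}&ijp^{m-1}+1\end{smallmatrix}\big)\in\mathrm{SL}_2(\ZZ_p)$, so that the central factor $\big(\begin{smallmatrix}p&0\\0&p\end{smallmatrix}\big)$ produces an $S_p$ and leaves $\varphi(\bullet\,A_{ij}^{-1})$. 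Factoring $A_{ij}^{-1}=\big(\begin{smallmatrix}1&0\\-ip^{m-1}&1\end{smallmatrix}\big)B_{ij}$ with $B_{ij}\in U_0(p^m)$ of lower-right entry $1-ijp^{m-1}$ (here $m\ge 4$ guarantees $B_{ij}\in U_0(p^m)$), and absorbing $B_{ij}$ via the transformation rule of automorphic forms, the inner sum over $j$ becomes $\big(\sum_{j\bmod p}\psi_m(1-ijp^{m-1})\big)$ times a single translate of $S_p\varphi$. For $i\not\equiv 0$ this is a Gauss-type sum $\sum_{k\bmod p}\psi_m(1+kp^{m-1})$, which \emph{vanishes} because $\psi_m$ has conductor \emph{exactly} $p^m$, so $k\mapsto\psi_m(1+kp^{m-1})$ is a nontrivial character of $\ZZ/p\ZZ$; this is the sole place where the conductor hypothesis (Hypothesis~\ref{H:psi conductor pm}) is used. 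For $i\equiv 0$ one has $A_{0j}=\big(\begin{smallmatrix}1&j\\0&1\end{smallmatrix}\big)\in U_0(p^m)$, so each of the $p$ terms contributes $S_p\varphi$ unchanged; altogether $\widetilde U_p\circ U_p=p\,S_p$. Combining with the adjointness step yields $\langle U_p\varphi,U_p\varphi'\rangle=\langle p\,S_p\varphi,\varphi'\rangle=p\langle S_p\varphi,\varphi'\rangle$.

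I expect the main obstacle to be the first (adjointness) step rather than the character-sum computation: one must pin down the coset representatives so that the identity $\langle\widetilde U_p\chi,\varphi'\rangle=\langle\chi,U_p\varphi'\rangle$ holds on the nose, track exactly which (if any) twist by $S_p$ enters, and verify that the nebentypus factors cancel between the two legs of the pairing. An alternative that sidesteps this bookkeeping: by Lemma~\ref{L:only new forms} the space $S_2^D(U;\psi_m)$ decomposes under the Hecke operators away from $p$ into lines $V(\pi)$ on which $U_p$ and $S_p$ act by scalars $a_p(\pi)$ and $s_p(\pi)$; the pairing respects the decomposition $\bigoplus_\pi V(\pi)\times V(\tilde\pi)$ (orthogonality from self-adjointness of the $T_l$, $l\notin\calS$), and \eqref{E:Up pair to p} becomes the scalar identity $a_p(\pi)\,a_p(\tilde\pi)=p\,s_p(\pi)$, which is a standard local computation on the new vector of the ramified principal series $\pi_p$. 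I would present whichever of the two arguments is cleaner in the normalization of \eqref{E:right action}.
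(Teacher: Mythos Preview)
Your proposal is correct and follows essentially the same route as the paper: move one $U_p$ across the pairing via the adjoint double coset $U_0(p^m)\big(\begin{smallmatrix}1&0\\0&p\end{smallmatrix}\big)U_0(p^m)$, then compute the composite operator explicitly and kill the $i\neq 0$ terms by the nontriviality of $k\mapsto\psi_m(1+kp^{m-1})$. The paper phrases the adjointness step uniformly as $\langle\varphi|[U\alpha U],\varphi'\rangle=\langle\varphi,\varphi'|[U\alpha^* U]\rangle$ with $\alpha^*=\det(\alpha)\alpha^{-1}$, and factors the matrix product slightly differently (pulling the central factor out on the right rather than the left), but the substance is identical. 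Two small remarks: your claim that $m\geq 4$ is needed for $B_{ij}\in U_0(p^m)$ is too conservative, since the lower-left entry $i^2jp^{2m-2}$ lies in $p^m\ZZ_p$ already for $m\geq 2$; and the paper flags the same subtlety you anticipate in the adjointness step, namely that one must choose fresh left-coset representatives for the transpose double coset rather than simply adjugating the $v_i$.
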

\begin{proof}

Recall the coset decomposition \eqref{E:Up operator cosets} with coset representatives $v_i = \Matrix p0{ip^m}1$ for $i=0,\dots, p-1$ to define the $U_p$-operator. We have $v_i^{-1} = \Matrix{1/p} 0{-ip^{m-1}} 1$.
Note that the action of $||_{v_i}$ on $E$ is trivial.
We exhibit a direct computation:
\begin{align}\nonumber
U_p \circ \AL_{\psi_m} \circ U_p &(\varphi)(g) = \sum_{i=0}^{p-1} \sum_{j=0}^{p-1} \varphi\big( g v_i^{-1} \Matrix 01{p^m}0 v_j^{-1} \big)\\ \nonumber
&= \sum_{i,j=0}^{p-1} \varphi \Big( g \Matrix{1/p} 0{-ip^{m-1}} 1 \Matrix 01{p^m}0  \Matrix{1/p} 0{-jp^{m-1}} 1 \Big)\\
\label{E:AL calculation}
&= \sum_{i,j=0}^{p-1} \varphi \Big( g\Matrix 01{p^m}0   \Matrix 1{-i/p}0{1/p}  \Matrix{1/p} 0{-jp^{m-1}} 1  \Big).
\end{align}

One can verify the following equality:
\[
\MATRIX 1{-i/p}0{1/p}  \MATRIX{1/p} 0{-jp^{m-1}} 1  = \MATRIX {1/p} 0 {-jp^{m-2}}{1/p} \MATRIX{1+ijp^{m-1}}{-i}{ijp^{2m-2}}{1-ijp^{m-1}}.
\]
Thus, we have
\[
\varphi \Big( g\Matrix 01{p^m}0   \Matrix 1{-i/p}0{1/p}  \Matrix{1/p} 0{-jp^{m-1}} 1  \Big) = \psi_m(1-ijp^{m-1}) \cdot \varphi \Big( g \Matrix 01{p^m}0 \Matrix {1/p} 0 {-jp^{m-2}}{1/p} \Big).
\]
Since $\psi_m$ has conductor exactly $p^m$, as we sum up \eqref{E:AL calculation} over $i$ (for a fixed $j$) all terms cancel to zero unless when $j=0$, the corresponding terms exactly give $p$ copies of
\[
\varphi\big(g \Matrix 01{p^m}0 \Matrix{1/p}00{1/p} \big).
\]
From this, we deduce immediately that
\[
U_p\circ \AL_{\psi_m} \circ U_p(\varphi) = p \cdot \AL_{\psi_m} \circ S_p(\varphi).\qedhere
\]
\end{proof}

\begin{remark}
A similar Atkin--Lehner map exists  for the space of automorphic forms of higher weights $k+1$, except that it does not preserve the natural $\calO$-lattices.  We do not need this generalization in this paper.
\end{remark}

\begin{notation}
We identify the space of weight two classical automorphic forms $S_2^D(U; \psi_m)$ with $\oplus_{i=0}^{t-1} E$ by evaluating at $\gamma_0, \dots, \gamma_{t-1}$.
We use $\gothU_p^\mathrm{cl}(\psi_m)$ and $\gothT_l^\mathrm{cl}(\psi_m)$ to denote the matrices for the Hecke actions of $U_p$ and $T_l$
(for $l \notin \calS$) under the standard basis.

Let $\alpha_0(\psi_m)\leq \dots \leq \alpha_{t-1}(\psi_m)$ denote the slopes of the Hodge polygon of $\gothU_p^\mathrm{cl}(\psi_m)$, in non-decreasing order.  For simplicity, we assume that $E$ contains all powers $p^{\alpha_i(\psi_m)}$.
\end{notation}

\begin{corollary}
\label{C:Hodge polygon less eq 1}
The numbers $\alpha_i(\psi_m)$ belong to $ [0,1]$.
There exists a basis $e_0(\psi_m), \dots, e_{t-1}(\psi_m)$ of $S_2^D(U; \psi_m, \calO) \cong \oplus_{i=0}^{t-1} \calO$ such that the matrix of $U_p$-action is given by a matrix $\gothU_p^\mathrm{cl, \bfe}(\psi_m)$ whose $i$th row is divisible by $p^{\alpha_i(\psi_m)}$.
\end{corollary}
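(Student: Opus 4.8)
The plan is to derive everything from the reflexivity relation of Proposition~\ref{P:Up pair to p} together with the evident integrality of the weight-two $U_p$-action. Write $M = \gothU_p^{\mathrm{cl}}(\psi_m)$ and $M' = \gothU_p^{\mathrm{cl}}(\psi_m^{-1})$ for the matrices of $U_p$ on $S_2^D(U;\psi_m)$ and on $S_2^D(U;\psi_m^{-1})$, in the bases given by evaluation at $\gamma_0,\dots,\gamma_{t-1}$. First I would record three elementary facts: (i) in these bases the pairing \eqref{E:S2 pairing} is represented by the identity matrix, which is immediate from its defining formula $\langle\varphi,\varphi'\rangle = \sum_i\varphi(\gamma_i)\varphi'(\gamma_i)$; (ii) by Proposition~\ref{P:explicit Up}(3), specialized to weight two where $||_\kappa\delta_p$ acts on the constant functions by the unit $\psi_m(d)$, the matrices $M$ and $M'$ both have entries in $\calO$, so all of their Hodge slopes are already $\ge 0$; and (iii) the operator $S_p$, being right translation by the central id\`ele $\mathrm{diag}(p^{-1},p^{-1})$, permutes the finite set $D^\times\backslash D_f^\times/U$ up to scalars in $\calO^\times$ supplied by $\psi_m$, so its matrix lies in $\GL_t(\calO)$.

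Next I would rewrite Proposition~\ref{P:Up pair to p} as a matrix identity. Because the pairing is the standard one, $\langle U_p\varphi,U_p\varphi'\rangle = p\langle S_p\varphi,\varphi'\rangle$ becomes $M^{\mathrm{T}}M' = pS$, where $S\in\GL_t(\calO)$ is (the transpose of) the matrix of $S_p$. Since $S$ is a unit, $M^{\mathrm{T}}$ is invertible and $M^{\mathrm{T}}\cdot(M'S^{-1}) = pI_t$ with $M'S^{-1}$ still integral; hence $p(M^{\mathrm{T}})^{-1}$ has entries in $\calO$, i.e. $pM^{-1}$ is integral.

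Then I would read off the Hodge slopes from the Smith normal form of $M$ over $\calO$. By items (3) and (4) of Subsection~\ref{S:Hodge v.s. Newton}, the slopes $\alpha_0(\psi_m)\le\dots\le\alpha_{t-1}(\psi_m)$ of the Hodge polygon of $M$ are exactly the valuations of the elementary divisors of $M$; the elementary divisors of $pM^{-1}$ then have valuations $1-\alpha_{t-1}(\psi_m)\le\dots\le1-\alpha_0(\psi_m)$, and the integrality of $pM^{-1}$ forces $1-\alpha_i(\psi_m)\ge 0$ for every $i$. Combined with $\alpha_i(\psi_m)\ge 0$ from (ii) this yields $\alpha_i(\psi_m)\in[0,1]$. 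Finally, the basis $e_0(\psi_m),\dots,e_{t-1}(\psi_m)$ for which the $i$th row of $\gothU_p^{\mathrm{cl},\bfe}(\psi_m)$ is divisible by $p^{\alpha_i(\psi_m)}$ is produced by item (5) of Subsection~\ref{S:Hodge v.s. Newton} applied to $M$, using the standing assumption that $E$ contains each $p^{\alpha_i(\psi_m)}$.

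The only substantive ingredient, Proposition~\ref{P:Up pair to p}, is already available, so the remaining work is essentially bookkeeping; the point to keep in mind is that one must bring in the companion space $S_2^D(U;\psi_m^{-1})$ and its integral lattice. The admissibility bound $v(a_p(\pi))\le 1$ of Lemma~\ref{L:only new forms} controls only the \emph{Newton} polygon of $\gothU_p^{\mathrm{cl}}(\psi_m)$, whereas the assertion here is the genuinely sharper statement that no elementary divisor of $\gothU_p^{\mathrm{cl}}(\psi_m)$ is divisible by $p^2$; it is precisely the simultaneous integrality of $M$ and $M'$, tied together through the pairing, that delivers this refinement.
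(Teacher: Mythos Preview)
Your proposal is correct and follows essentially the same route as the paper: both arguments translate Proposition~\ref{P:Up pair to p} into the matrix identity $M^{\mathrm{T}}M' = pS$ with $S\in\GL_t(\calO)$, use integrality of $M$ and $M'$, and read off the bound from the Smith normal form (the paper phrases this as $\alpha_i(\psi_m)+\alpha_{t-1-i}(\psi_m^{-1})=1$, while you phrase it as integrality of $pM^{-1}$, but this is the same computation). The final appeal to Subsection~\ref{S:Hodge v.s. Newton}(5) for the basis is identical.
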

\begin{proof}

The existence of the basis $e_1(\psi_m), \dots, e_t(\psi_m)$ follows from Subsection~\ref{S:Hodge v.s. Newton}(5). We shall prove the first statement now.

It is clear that $\gothU_p^\mathrm{cl}(\psi_m)$ has entries in the integral ring $\calO$.
By Proposition~\ref{P:Up pair to p},
we have
\[
\big(\AL_{\psi_m}^{-1}\circ U_p \circ \AL_{\psi_m}\big) \circ U_p = p\cdot S_p.
\]
Writing this in terms of matrices, we have
\begin{equation}
\label{E:UpUp=pA}
M \cdot
\gothU_p^\mathrm{cl}(\psi_m) = pA,
\end{equation}
where $A \in \GL_t(\calO)$ is the matrix for the action of the central character $S_p$, and $M \in \rmM_t(\calO)$ corresponds to the action of $U_p$ on $S_2^D(U; \psi_m^{-1}; \psi_m)$. (Note that $\AL_{\psi_m}$ is an isomorphism preserving the natural $\calO$-lattices.)
By Subsection~\ref{S:Hodge v.s. Newton}(4), we can write $\gothU_p^\cl(\psi_m) = BDC$ with $B, C \in \GL_t(\calO)$ and $D$ diagonal, so that the valuations of the diagonal entries of $D$ are exactly $\alpha_0(\psi_m), \dots, \alpha_{t-1}(\psi_m)$.
So \eqref{E:UpUp=pA} can be rewritten as
\[
M = AC^{-1}(pD^{-1}) B^{-1},
\]
where $AC^{-1}, B \in \GL_t(\calO)$.
By Subsection~\ref{S:Hodge v.s. Newton}(4), this means that the slopes of the Hodge polygon of $M$ are given by $1-\alpha_i(\psi_m)$. Since $M$ and $\gothU_p^\cl(\psi_m)$ both have integral entries, both $\alpha_i(\psi_m)$ and $1-\alpha_i(\psi_m)$ are non-negative. Thus $\alpha_i(\psi_m) \in [0,1]$.
\end{proof}

\subsection{A variant of the Atkin--Lehner map}
\label{S:variant of duality pairing}
For a purely technical reason, we need a generalization of the Atkin--Lehner map \eqref{E:Atkin-Lehner} of automorphic forms whose weights are infinitesimal deformations of weight $2$.

Consider the ring $\calO[w]/(p^2w)$ consisting of ``polynomials" $f(w) = a_0+ a_1w + \cdots $ where the coefficients $a_i$ with $i \geq 1$ are only meaningful modulo $p^2 \calO$.

Let $w$ be an indeterminant.
Note that we have a character
\[
\xymatrix@R=0pt{
\psi_{m,w}: \ \big({\begin{smallmatrix} \ZZ_p^\times& \ZZ_p\\ p^{m}\ZZ_p&\ZZ_p^\times \end{smallmatrix}}\big)
\ar[rr] && \big( \calO[w]/(p^2w) \big)^\times
\\
\big({\begin{smallmatrix} a&b\\ c&d \end{smallmatrix}}\big)
\ar@{|->}[rr] && \psi_m(d) \langle d\rangle ^w,
}
\]
where $\langle d \rangle: = d \omega^{-1}(d)$ is defined as before and $\langle d\rangle^w = 1+ (\langle d \rangle -1)w \in \calO[w]/(p^2w)$. Note that it is important here to consider torsion coefficients, otherwise, $\psi_{m,w}$ is not a homomorphism of groups.
We point out that the image of $\psi_{m,w}$ in fact lands in the \emph{subring}
\begin{equation}
\label{E:torsion image}
\calO + p \calO /p^2\calO \cdot w
\end{equation}
of $\calO[w]/(p^2w)$.
We think of $\psi_{m,w}$ as certain deformation of the character $\psi_m$.

We introduce the following deformed version of classical automorphic forms:
\begin{equation}
\label{E:S2 deformation}
S_2^D(U; \psi_{m,w}): = \Big\{ \varphi: D^\times_f \to \calO[w]/(p^2w) \, \Big| \,
\begin{array}{c}
 \varphi(\delta gu) =  \psi_m(d)\langle d\rangle^w \varphi(g) \textrm{ for any }
\\
 \delta \in D^\times, g \in D^\times_f,
\textrm{and } u\in U \textrm{ with }u_p = \big(\begin{smallmatrix}
a&b\\c&d
\end{smallmatrix} \big)
\end{array}
 \Big\}.
\end{equation}
This is a finite free $\calO[w]/(p^2w)$-module which carries linear actions of $T_l$ (for $l \notin \calS$) and $U_p$ in the natural way.

Abstractly, we can identify $S_2^D(U; \psi_{m, w})$ with $S_2^D(U; \psi_m; \calO) \otimes_\calO \calO[w]/(p^2w)$ by identifying the evaluations at $\gamma_i$'s.
Then the elements $e_0(\psi_m), \dots, e_{t-1}(\psi_m)$ in Corollary~\ref{C:Hodge polygon less eq 1} gives rise to a basis of $S_2^D(U; \psi_{m, w})$ over $\calO[w]/(p^2w)$.
Let $\gothU_p^{\cl, \bfe}(\psi_{m, w}) \in \rmM_t \big(\calO[w]/(p^2w) \big)$ denote the matrix for the $U_p$-action on $S_2^D(U; \psi_{m,w})$ with respect to this basis.
Since $\psi_{m,w}$ takes value in the \emph{subring} \eqref{E:torsion image}, all entries of $\gothU_p^{\cl, \bfe}(\psi_{m, w})$ belongs to this subring $\calO+p\calO/p^2\calO \cdot w$.
It follows that the $i$th row  of $\gothU_p^{\cl, \bfe}(\psi_{m, w})$ belongs to $p^{\alpha_i(\psi_m)}\calO[w] / p^2w \calO[w] \subseteq \calO[w]/(p^2w)$.  This is true because all $\alpha_i(\psi_m) \in [0,1]$; so the coefficient on the variable $w$ belongs to $p\calO / p^2\calO \subseteq p^{\alpha_i(\psi_m)}\calO / p^2\calO$.

The following technical proposition will be important for us later.

\begin{proposition}
\label{P:determine of Up deformed}
Let $\overline \gothU_p^{\cl, \bfe}(\psi_{m,w}) \in \rmM_t(\FF[w])$ denote the reduction modulo $\varpi$ of the matrix given by dividing the $i$th row of $\gothU_p^{\cl, \bfe}(\psi_{m,w})$ by $p^{\alpha_i(\psi_m)}$.\footnote{While the division by $p^{\alpha_i(\psi_m)}$ in the ring $\calO/p^2\calO[w]$ is not well-defined, the reduction modulo $\varpi$ of the quotient is a well-defined element in $\FF[w]$.}
Then
\[
\det \big( \overline \gothU_p^{\cl, \bfe}(\psi_{m,w}) \big) \in \FF^\times,
\]
i.e., it is invertible as a matrix in $\rmM_t(\FF[w])$.
\end{proposition}
\begin{proof}
We write $S_2^D(U; \psi_{m,w}^{-1}; \psi_{m,w})$ for the space defined similar to \eqref{E:S2 deformation} except that $\psi_m(d) \langle d \rangle ^w$ is replaced by $\psi_m(a) \langle a \rangle ^w$.
Similar to the case of classical automorphic forms, we define an Atkin--Lehner map
\[
\xymatrix@R=0pt{
\AL_{\psi_{m,w}}: S_2^D(U; \psi_{m,w}) \ar[r] & S_2^D(U; \psi^{-1}_{m,w}; \psi_{m,w})
\\
\varphi \ar@{|->}[r] & \varphi\big(\bullet \Matrix 01{p^m}0\big).
}
\]
One can similarly check that $\AL_{\psi_{m,w}}$ is a well-defined isomorphism.

Since the proof of Proposition~\ref{P:Up pair to p} is mostly tautological and $\psi_{m,w}(1+ap^{m-1}) = \psi_m(1+ap^{m-1})$ for $a \in \ZZ$,
its proof may be transported verbatim to our setup to show
\begin{equation}
\label{E:Up pair to p deformed}
(\AL_{\psi_{m,w}}^{-1} \circ U_p \circ \AL_{\psi_{m,w}}) \circ U_p (\varphi) =p \cdot  S_p(\varphi).
\end{equation}

Let $B \in \GL_t(\calO)$ denote the change of basis matrix from the basis given by evaluation at $\gamma_i$'s to the basis $e_0(\psi_m), \dots, e_{t-1}(\psi_m)$.
Then \eqref{E:Up pair to p deformed} gives
\[
M_w B^{-1} \gothU_p^{\cl, \bfe}(\psi_{m,w})B = p A_w,
\]
where $A_w \in \GL_t(\calO[w]/(p^2w)) $ is the matrix for the action of $S_p$ on $S_2^D(U; \psi_{m,w})$, and $M_w \in \rmM_t(\calO[w]/(p^2w))$ is the matrix for the action of $\AL_{\psi_{m,w}}^{-1} \circ U_p \circ \AL_{\psi_{m,w}}$.
It follows that
\begin{equation}
\label{E:Up pair to p deformed matrix version}
BA_w^{-1}M_w B^{-1} \gothU_p^{\cl, \bfe}(\psi_{m,w}) =pI \qquad \textrm{as matrices in }\rmM_t(\calO[w]/(p^2w)).
\end{equation}

We claim that all entries on the $i$th row of the matrix $N = BA_w^{-1}M_w B^{-1}$ belongs to $p^{1-\alpha_i(\psi_m)}\calO[w] / p^2w\calO[w] \subseteq \calO[w]/(p^2w)$.
For a matrix $L \in \rmM_t(\calO[w]/(p^2w))$, we write $L|_{w=0}$ for the matrix whose entries are the image of the corresponding entires of $L$ under the map $\calO[w]/(p^2w) \to \calO[w]/(w) \cong \calO$.

We first show that the $i$th row of  $N|_{w=0}$
belongs to $p^{1-\alpha_i(\psi_m)}\calO$.
For this, we write $\gothU_p^{\cl, \bfe}(\psi_{m,w})|_{w=0} = \gothU_p^{\cl, \bfe}(\psi_m)= \Diag\{p^{\alpha_0(\psi_m)}, \dots, p^{\alpha_{t-1}(\psi_m)}\}\cdot C$ for some matrix $C \in \rmM_t(\calO)$.
Since $\alpha_0(\psi_m), \dots, \alpha_{t-1}(\psi_m)$ are the slopes of Hodge polygon of the matrix $\gothU_p^{\cl, \bfe}(\psi_{m})$, the determinant of the matrix $C$ belongs to $\calO^\times$. Thus, $C \in \GL_t(\calO)$.
Using this and \eqref{E:Up pair to p deformed matrix version}, we deduce
\[
N|_{w=0} \Diag\{p^{\alpha_0(\psi_m)}, \dots, p^{\alpha_{t-1}(\psi_m)}\}\cdot C = pI,
\]
and hence
\[
N|_{w=0}  = C^{-1} \cdot \Diag\{p^{1-\alpha_0(\psi_m)}, \dots, p^{1-\alpha_{t-1}(\psi_m)}\}.
\]
So the $i$th column of $N|_{w=0}$ belongs to $p^{1-\alpha_i(\psi_m)} \calO$.

Now, we observe that the matrix $N$ is a product of matrices with entries in the subring $\calO + p\calO/p^2\calO  \cdot w$, so the entires of the $i$th column of $N$ belongs to $p^{1-\alpha_i(\psi_m)}\calO[w] / p^2w\calO[w]$, as the coefficients on $w$ automatically belongs to $p\calO \subseteq p^{1-\alpha_i(\psi_m)}\calO$.

We use $\overline N \in \rmM_t(\FF[w])$ to denote the reduction modulo $\varpi$ of the matrix given by dividing the $i$th column of $M$ by $p^{1-\alpha_i(\psi_m)}$.
It then follows that
\[
\overline N \cdot
\overline \gothU_p^{\cl, \bfe}(\psi_{m,w}) = I \qquad \textrm{ as matrices in }\rmM_t(\FF[w]).
\]
So $ \overline \gothU_p^{\cl, \bfe}(\psi_{m,w}) \in \GL_t(\FF[w])$ and its determinant belongs to $\FF^\times$.
\end{proof}

\begin{notation}
\label{N:character kappa}
Let $\psi_m$ be as in Hypothesis~\ref{H:psi conductor pm}.
We use $\calW(x\psi_m; p^{-1})$ to denote the closed disk of radius $p^{-1}$ ($\frac 14$ in case $p=2$)\footnote{We apologize for the confusing notation when $p=2$.} centered at $x\psi_m$ in the weight space.
This disk corresponds to all characters of the form $x\psi_m \langle \cdot \rangle^w$ for $w \in\calO_{\CC_p}$, in particular, including classical characters $x^k\psi_m\omega^{1-k}$ for $k \geq 1$.

We take $A^\circ$ to be the Tate algebra $\calO\langle w\rangle$ and $A $ to be $ E\langle w\rangle$.
We identify $\Max(A) = \Max(E\langle w\rangle)$ with the disk $\calW(x \psi_m; p^{-1})$ so that the universal character
$\kappa: \Gamma \to E\langle w\rangle^\times$ is given by
\[
\kappa(a): = a \psi_m(a)\langle a\rangle^{w}.
\]
Here the expression $\langle a\rangle^w$  is understood as $(1+2pb)^w = \sum_{n\geq 0} (2pb)^n \binom wn \in 1+2pw\ZZ_p\langle w\rangle$, if $\langle a\rangle  = 1+2pb  $.
\end{notation}

\subsection{A variant of the space of overconvergent automorphic forms}
For a technical reason, it is more convenient to consider a variant of the space of overconvergent automorphic forms, with coefficients in $\calB : = A\langle p z\rangle = E\langle w, pz \rangle \subset A\widehat \otimes \calA$. 

Recall that the right action $||_\kappa \gamma$ of $\gamma = \big(
\begin{smallmatrix}
a&b\\c&d
\end{smallmatrix} \big) \in \Sigma_0(p^m)$ on $A \widehat \otimes\calA$ is given by
\begin{equation}
\label{E:right action by gamma}
(h||_{\kappa}\gamma)(z): = \frac{\kappa(cz+d)}{cz+d} h\big( \frac{az+b}{cz+d}\big) = \psi_m(d)\langle d\rangle^w \big(1+\frac cdz\big)^{w}h\big( \frac{az+b}{cz+d}\big)
 \textrm{ for }h(z) \in \calA.
\end{equation}
Since $p \nmid d$ and $ p^m |c$, the expansion of the exponential $(1+\frac cdz)^w$ lands in $\calO\langle w, p^{m-1} z\rangle \subset \calB$.\footnote{This follows from the standard estimate $(1+x)^w = 1+\sum_{n\geq 1}  \binom wn x^n \in \calO\langle w, p^{-1}x\rangle$ (note that  the binomial coefficients are not integral for a free variable $w$.) We will use this estimate freely later in the paper.}
So \eqref{E:right action by gamma} can be applied to an element $h(z) \in \calB$ and gives rise to a right action of $\Sigma_0(p^m)$ on $\calB$.
Therefore, we can define the space of overconvergent automorphic forms with coefficients in $\calB$ (instead of $A \widehat \otimes \calA$):
\[
S^{D,\dagger}_\calB(U; \kappa): = \Big
\{
\varphi: D^\times_f \to\calB\;\Big|\;
\varphi(\delta gu) = \varphi(g)||_{\kappa}u_p, \textrm{ for any }\delta \in D^\times, g \in D_f^\times, u \in U
\Big\};
\]
it is a subspace of $S^{D,\dagger}(U; \kappa)$ (with coefficients in $A \widehat \otimes \calA$).
In explicit forms, we have an isomorphism of Banach spaces $S^{D,\dagger}_\calB(U; \kappa) \xrightarrow{\cong} \oplus_{i=0}^{t-1} \calB$ given by $\varphi\mapsto (\varphi(\gamma_i))_{i=0, \dots, t-1}$.

\begin{notation}
\label{N:matrix for Tl and Up}
We use $\gothU_p^A(\kappa)$ and $\gothT_l^A(\kappa)$ (for $l \notin \calS$) to denote the infinite matrices in Proposition~\ref{P:explicit Up} for the operators $\gothU_p$ and $\gothT_l$ acting on $\oplus_{i=0}^{t-1} A\widehat \otimes \calA$, with respect to the orthonormal basis $1_0, \dots, 1_{t-1}, z_0, \dots, z_{t-1}, z^2_0, \dots$.  Here the subscripts indicate which copy of $\calA$ the element comes from.
We use $\gothU_p^\calB(\kappa)$ and $\gothT_l^\calB(\kappa)$ (for $l \notin \calS$) to denote the infinite matrix for  the operators $\gothU_p$ and $\gothT_l$ acting on $S^{D,\dagger}_\calB(U; \kappa) = \oplus_{i=0}^{t-1} \calB$, with respect to the orthonormal basis $1_0, \dots, 1_{t-1}, pz_0, \dots, pz_{t-1}, p^2z^2_0, \dots$
It is clear from the definition that
\[
\Diag(p^{-1}; t) \gothU_p^A(\kappa) \Diag(p; t) = \gothU_p^\calB(\kappa) , \quad \textrm{and} \quad \Diag(p^{-1}; t) \gothT_l^A(\kappa)  \Diag(p; t) = \gothT_l^\calB(\kappa) .
\]
In particular, $\Char \big(\gothU_p^A(\kappa) , S^{D,\dagger}(U; \kappa)\big) = \Char\big(\gothU_p^\calB(\kappa) , S^{D,\dagger}_\calB(U; \kappa) \big).$  So to understand the $U_p$-slopes on $S^{D, \dagger}(U;\kappa)$, it suffices to look at the $U_p$-slopes on $S^{D,\dagger}_\calB(U;\kappa)$.

\end{notation}

The following lemma gives a key congruence relation between the action of a matrix in $\Sigma_0(p^m)$ on the space of overconvergent automorphic forms and on the space of \emph{classical automorphic forms}.
\begin{lemma}
\label{L:congruence}
Let $\big(\begin{smallmatrix}a&b\\c&d\end{smallmatrix} \big)$ be a matrix in $\Sigma_0(p^m)$ with $v(a) = 0$ or $1$.
Then the matrix for $||_\kappa\big(\begin{smallmatrix}a&b\\c&d\end{smallmatrix} \big)$ acting on $\calB$ (with respect to the basis  $1, pz, p^2z^2, \dots$) belongs to
\[
\begin{pmatrix}
\psi_m(d)\langle d\rangle^w & pA^\circ & p^2 A^\circ & p^3A^\circ &\cdots
\\
p^3A^\circ &  \frac ad\psi_m(d)\langle d\rangle^w + p^2aA^\circ & paA^\circ &p^2aA^\circ &  \cdots
\\
p^4 A^\circ &p^3aA^\circ & (\frac ad)^2\psi_m(d)\langle d\rangle^w + p^2a^2A^\circ& pa^2A^\circ  & \cdots
\\
p^5 A^\circ &p^4 aA^\circ &p^3a^2A^\circ & (\frac ad)^3 \psi_m(d)\langle d\rangle^w+ p^2a^3A^\circ&  \cdots
\\
\vdots & \vdots & \vdots & \vdots & \ddots
\end{pmatrix}
\]
where the $(i,j)$-entry of the matrix is
\begin{itemize}
\item
$(\frac ad)^i \psi_m(d)\langle d\rangle^w+ p^2a^iA^\circ$ if $i=j>0$,
\item
$p^{i-j+2}a^jA^\circ$ if $i>j$, and
\item
$p^{j-i} a^i A^\circ$ if $i<j$.
\end{itemize}
\end{lemma}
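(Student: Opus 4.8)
The plan is to compute the infinite matrix $N=(n_{ij})_{i,j\ge 0}$ of $||_\kappa\big(\begin{smallmatrix}a&b\\c&d\end{smallmatrix}\big)$ acting on $\calB$ in the basis $1,pz,p^2z^2,\dots$ directly from the explicit formula \eqref{E:right action by gamma}, and then read off each entry by bookkeeping $p$-adic valuations coefficientwise in $w$. By \eqref{E:right action by gamma}, for each $j\ge 0$,
\[
\big((p^jz^j)||_\kappa\big(\begin{smallmatrix}a&b\\c&d\end{smallmatrix}\big)\big)(z)=p^j\,\psi_m(d)\langle d\rangle^w\big(1+\tfrac cd z\big)^w R(z)^j,\qquad R(z):=\tfrac{az+b}{cz+d}.
\]
Expanding the right side in powers of $z$ and dividing the coefficient of $z^i$ by $p^i$ gives $n_{ij}=p^{j-i}\,[z^i]\big(Q(z)R(z)^j\big)$, where $Q(z):=\psi_m(d)\langle d\rangle^w(1+\tfrac cd z)^w$. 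Writing $Q(z)=Q_0+\sum_{k\ge1}q_kz^k$ and $R(z)=\sum_{k\ge0}R_kz^k$, this is $n_{ij}=p^{j-i}\big(Q_0[z^i]R^j+\sum_{k=1}^i q_k\,[z^{i-k}]R^j\big)$.

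Next come three elementary estimates, using $v(c)\ge m\ge 4$ (Hypothesis~\ref{H:psi conductor pm}), $p\nmid d$, that $\psi_m(d)$ is a root of unity, $\langle d\rangle\in 1+p\ZZ_p$, $e:=v(a)\in\{0,1\}$, and the coefficientwise bound $v\big(\binom wk\big)\ge -v(k!)\ge -(k-1)$. First, $Q_0=\psi_m(d)\langle d\rangle^w$ is a unit in $A^\circ$, and $q_k=\psi_m(d)\langle d\rangle^w\binom wk(c/d)^k$ has all coefficients of valuation $\ge (m-1)k+1\ge 3k+1$. Second, expanding $1/(cz+d)=\tfrac1d\sum_{k\ge0}(-c/d)^kz^k$ gives $R_0=b/d$ with $v(R_0)\ge0$, $R_1=a/d-bc/d^2$ (so $v(R_1-a/d)\ge m\ge4$), and $v(R_k)\ge e+(k-1)m\ge e+4(k-1)$ for all $k\ge1$. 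Third, grouping the $j$ factors of $R(z)^j$ according to how many carry a positive power of $z$, one finds for $j\ge1$ that $v([z^0]R^j)\ge0$, $v([z^\ell]R^j)\ge\ell e$ for $1\le\ell\le j$, and $v([z^\ell]R^j)\ge je+4(\ell-j)$ for $\ell>j$.

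Finally one reads off the entries by substituting these bounds, splitting into the cases $i=j=0$, $i=0<j$, $j=0<i$, $1\le i=j$, $1\le j<i$, $1\le i<j$. Since $\calO$ is a discrete valuation ring and $p,a$ are scalars, membership of an element of $A^\circ$ in the ideal $p^Na^MA^\circ$ is equivalent to all its coefficients (as a power series in $w$) having valuation $\ge N+Me$, so every assertion reduces to a valuation inequality. One gets $n_{00}=Q_0=\psi_m(d)\langle d\rangle^w$ exactly. On the diagonal $1\le i=j$, the term $Q_0[z^i]R^i$ equals $(a/d)^i\psi_m(d)\langle d\rangle^w$ plus an element of valuation $\ge(i-1)e+4\ge ie+2$ (since $R_1=a/d+(\text{valuation}\ge4)$ and every monomial of $[z^i]R^i$ other than $R_1^i$ involves an $R_0$-factor, hence also has valuation $\ge(i-1)e+4$), while each correction $q_k[z^{i-k}]R^i$ has valuation $\ge ie+3$; this gives the stated diagonal entry. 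The off-diagonal bounds — the $i>j$ and $i<j$ formulas, including the boundary cases $j=0$ and $i=0$ — follow the same pattern, e.g. $n_{0j}=p^j\psi_m(d)\langle d\rangle^w(b/d)^j\in p^jA^\circ$ and $n_{i0}=p^{-i}q_i$ has valuation $\ge 2i+1\ge i+2$.

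The main obstacle is the off-diagonal bookkeeping: for $i\ne j$ with $i,j\ge1$ the bound on $[z^{i-k}]R^j$ splits into the regimes $i-k\le j$ and $i-k>j$, and one must check for \emph{every} $k\in\{1,\dots,i\}$ that $p^{j-i}q_k[z^{i-k}]R^j$ (and also the main term $p^{j-i}Q_0[z^i]R^j$) has coefficientwise valuation at least $v\big(p^{|i-j|}a^{\min(i,j)}\big)$; in the regime $i-k\le j$ this uses $k\ge i-j$, and in the regime $i-k>j$ it uses $k\le i-j-1$, each combined with $0\le e\le1$ and $m\ge4$. All the resulting inequalities are elementary, but there are several of them, and it is the case $m=3$ (which would replace the "$4$" by "$3$" and break the bound $2(i-j)\ge(i-j)+2$ when $i-j=1$) that shows the hypothesis $m\ge 4$ is genuinely used here; aligning the exponents with the matrix displayed in the statement is where the care lies.
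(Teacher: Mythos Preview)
Your proof is correct. The paper, however, organizes the same computation much more compactly by invoking the generating-series identity of Proposition~\ref{P:generating series}: rather than computing each $n_{ij}$ individually, it writes
\[
H_{||_\kappa\left(\begin{smallmatrix}a&b\\c&d\end{smallmatrix}\right)}(p^{-1}x,\,py)\;=\;\frac{d\,\psi_m(d)\langle d\rangle^w\bigl(1+p^{-1}\tfrac{c}{d}x\bigr)^w}{p^{-1}cx+d-axy-pby}
\]
and observes in one line that this lies in $\calO\langle w,\,py,\,axy,\,p^{i+2}x^i;\,i\in\NN\rangle$, with the diagonal contribution coming from the expansion of $d\psi_m(d)\langle d\rangle^w/(d-axy)$. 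Your entry-by-entry analysis unpacks exactly this membership statement: your $Q(z)$ is the numerator as a series in $x$, your $R(z)^j$ packages the $y^j$-coefficient of the expanded denominator, and your six-case bookkeeping recovers the same valuation bounds that the generating-series formulation encodes at once.

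Both arguments rest on the same two ingredients---the estimate $(1+p^{m-1}z)^w\in 1+p^3zw\,\calO\langle pz,w\rangle$ (which is where $m\ge 4$ is used, as you correctly identify) and the shape of $R_k$---so the content is identical. What the generating-series device buys is that the off-diagonal case splits you work through are absorbed into a single ring-membership assertion; what your approach buys is that it is self-contained and does not depend on Proposition~\ref{P:generating series}, making the role of each hypothesis (in particular $m\ge 4$ and $v(a)\le 1$) more transparent.
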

\begin{proof}
Note that $(1+  p^{m-1} z)^w \in 1+ p^3zw\calO\langle p z, w\rangle$ since $m \geq 4$.\footnote{It is important that the $zw$ coefficient has valuation \emph{strictly} bigger than $2$.  The case $m=3$ fails exactly at this point. See Remark~\ref{R:m=3}.}  So Proposition~\ref{P:generating series} implies that (note that $ p^m|c,p\nmid d$)
\[
H_{||_\kappa(\begin{smallmatrix}a&b\\c&d\end{smallmatrix} )}(p^{-1}x,py)  = \frac{d\psi_m(d)\langle d \rangle^w(1+p^{-1}\frac cd x)^w}{p^{-1} c x + d - a xy - p by} \in \calO \langle w, py, axy, p^{i+2} x^i; i \in \NN\rangle.
\]
Translating this congruence into the language of matrix and noting that the dominant coefficients on terms $x^iy^i$ come from the expansion of $\frac{d\psi_m(d) \langle d \rangle^w}{d - a xy}$ proves the Lemma.
\end{proof}

Lemma~\ref{L:congruence} implies that the actions of $U_p$ and $T_l$ for $l \notin \calS$ on $S^{D, \dagger}_{\calB}(U; \kappa)$ are ``very close" to their actions on the completed direct sum
\[
\widehat \bigoplus_{n \geq 0} S_2^D(U; \psi_m\omega^{-2n}; \omega^n).
\]
More precisely, we have the following.

\begin{proposition}
\label{P:Tl Up overconvergent equiv classical}
(1)
For $l \notin \calS$, we consider the infinite block diagonal matrix
\[
\gothT_l^{\cl, \infty} : = \Diag\big\{\gothT_l^\cl(\psi_m), \ l \cdot \gothT_l^\cl(\psi_m\omega^{-2}), \ l^2 \cdot \gothT_l^\cl(\psi_m\omega^{-4}), \ \dots \big\}.
\]
Then the difference $\gothT_l^\calB(\kappa) - \gothT_l^{\cl, \infty}$ lies in the error space
\begin{equation}
\label{E:error matrix}
\mathbf{Err}: =
\begin{pmatrix}
p\rmM_t(A^\circ) & p\rmM_t(A^\circ) & p^2 \rmM_t(A^\circ) & p^3\rmM_t(A^\circ) &\cdots
\\
p^3\rmM_t(A^\circ) &  p\rmM_t(A^\circ) & p\rmM_t(A^\circ) &p^2 \rmM_t(A^\circ) &  \cdots
\\
p^4  \rmM_t(A^\circ) &p^3\rmM_t(A^\circ) & p \rmM_t(A^\circ)& p\rmM_t(A^\circ) & \cdots\\
p^5\rmM_t(A^\circ) &p^4\rmM_t(A^\circ) &p^3\rmM_t(A^\circ) &  p\rmM_t(A^\circ)&  \cdots
\\
\vdots & \vdots & \vdots & \vdots & \ddots
\end{pmatrix}
\end{equation}
where the $(i,j)$-block entry of the matrix  is
\begin{itemize}
\item
$ p\rmM_t(A^\circ)$ if $i=j$,
\item
$p^{i-j+2}\rmM_t(A^\circ)$ if $i > j$, and
\item
$p^{j-i}\rmM_t(A^\circ)$ if $i < j$.
\end{itemize}

(2) Similarly, we consider the infinite block diagonal matrix
\[
\gothU_p^{\cl, \infty}: = \Diag\big( \gothU_p^\cl(\psi_m),\ p \cdot \gothU_p^\cl(\psi_m \omega^{-2}),\ p^2\cdot \gothU_p^\cl(\psi_m \omega^{-4}),\ \dots \big).
\]
Then difference $\gothU_p^\calB(\kappa) - \gothU_p^{\cl, \infty}$ lies in the $p$-error space
\begin{equation}
\label{E:Errp}
 \Err_p: =
\begin{pmatrix}
p\rmM_t(A^\circ) & p\rmM_t(A^\circ) & p^2 \rmM_t(A^\circ) & p^3\rmM_t(A^\circ) &\cdots
\\
p^3\rmM_t(A^\circ) &  p^2\rmM_t(A^\circ) & p^2\rmM_t(A^\circ) &p^3 \rmM_t(A^\circ) &  \cdots
\\
p^4  \rmM_t(A^\circ) &p^4\rmM_t(A^\circ) & p^3 \rmM_t(A^\circ)& p^3\rmM_t(A^\circ) & \cdots
\\
p^5\rmM_t(A^\circ) &p^5\rmM_t(A^\circ) &p^5\rmM_t(A^\circ) &  p^4\rmM_t(A^\circ)&  \cdots
\\
\vdots & \vdots & \vdots & \vdots & \ddots
\end{pmatrix},
\end{equation}
where the $(i,j)$-block entry is
\begin{itemize}
\item
$p^{i+1}\rmM_t(A^\circ)$ if $i =j$,
\item
$p^{i+2}\rmM_t(A^\circ)$ if $i>j$, and
\item
$p^j\rmM_t(A^\circ)$ if $i<j$.
\end{itemize}
Moreover, the $(i,i)$-block entry of $\gothU_p^\calB(\kappa)$ is congruent to the matrix $p^i\cdot  \gothU_p^\cl(\psi_{m,w-2i} \omega^{-2i})$ modulo $p^{i+2} \rmM_t(A^\circ)$.
\end{proposition}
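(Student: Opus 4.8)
The plan is to reduce both parts to the single-operator estimate of Lemma~\ref{L:congruence}, organized by the structural description in Proposition~\ref{P:explicit Up}. First I would observe that, after ordering the orthonormal basis of $\oplus_{i=0}^{t-1}\calB$ by degree (all degree-$0$ vectors, then all degree-$1$ vectors, and so on), Proposition~\ref{P:explicit Up} presents $\gothU_p^\calB(\kappa)$ (resp.\ $\gothT_l^\calB(\kappa)$) as an infinite block matrix with $t\times t$ blocks, whose $(n,n')$-block has $(i',i)$-entry equal to the sum, over the $\leq p$ (resp.\ $\leq l+1$) indices $j$ with $\lambda_{i,j}=i'$, of the $(n,n')$-entry of the matrix for $||_\kappa\delta_{i,j,p}$ acting on $\calB$; here $\delta_{i,j}\in D^\times$ has norm $p$ (resp.\ $l$) and $\delta_{i,j,p}\in\big(\begin{smallmatrix}p\ZZ_p&\ZZ_p\\p^m\ZZ_p&\ZZ_p^\times\end{smallmatrix}\big)$ (resp.\ $U_0(p^m)$), so $v(a_{\delta_{i,j}})=1$ (resp.\ $0$). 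The crucial point for comparison with classical forms is that the double-coset identity $\gamma_iv_j^{-1}=\delta_{i,j}^{-1}\gamma_{\lambda_{i,j}}u_{i,j}$ does not involve the weight, so exactly the same global elements $\delta_{i,j}$ also govern the classical weight-$2$ matrices: for a character $\chi$ of the Iwahori at $p$, the $(i',i)$-entry of $\gothU_p^\cl(\chi)$ (resp.\ $\gothT_l^\cl(\chi)$) is $\sum_{j:\,\lambda_{i,j}=i'}\chi(d_{\delta_{i,j}})$, where $d_{\delta_{i,j}}$ is the lower-right entry of $\delta_{i,j,p}$.

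The off-diagonal blocks are then handled purely by Lemma~\ref{L:congruence}, which says the $(n,n')$-entry of $||_\kappa\big(\begin{smallmatrix}a&b\\c&d\end{smallmatrix}\big)$ lies in $p^{\,n-n'+2}a^{n'}A^\circ$ for $n>n'$ and in $p^{\,n'-n}a^{n}A^\circ$ for $n<n'$. Summing over finitely many $j$ and using that $\gothT_l^{\cl,\infty}$ and $\gothU_p^{\cl,\infty}$ are block-diagonal, one reads off directly that the off-diagonal blocks of $\gothT_l^\calB(\kappa)-\gothT_l^{\cl,\infty}$ (resp.\ $\gothU_p^\calB(\kappa)-\gothU_p^{\cl,\infty}$) satisfy the valuation bounds built into $\mathbf{Err}$ (resp.\ $\Err_p$); for $\Err_p$ the additional powers of $p$ beyond $\mathbf{Err}$ come precisely from $p\mid a$, i.e.\ from $v(a)=1$.

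For the diagonal, Lemma~\ref{L:congruence} gives the $(n,n)$-entry of $||_\kappa\big(\begin{smallmatrix}a&b\\c&d\end{smallmatrix}\big)$ as $(\tfrac ad)^n\psi_m(d)\langle d\rangle^w+p^2a^nA^\circ$. In the $\gothU_p$-case $v(a)=1$, so $p^2a^n\in p^{n+2}\ZZ_p$, and the $(n,n)$-block of $\gothU_p^\calB(\kappa)$ is congruent modulo $p^{n+2}\rmM_t(A^\circ)$ to the matrix with $(i',i)$-entry $\sum_{j:\,\lambda_{i,j}=i'}\big(\tfrac{a_{\delta_{i,j}}}{d_{\delta_{i,j}}}\big)^n\psi_m(d_{\delta_{i,j}})\langle d_{\delta_{i,j}}\rangle^w$. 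Writing $a_\delta d_\delta=p+b_\delta c_\delta$ with $b_\delta c_\delta\in p^m\ZZ_p$ and using $m\geq 4$, the leading correction $n p^{n-1}b_\delta c_\delta$ has valuation $\geq n+3$, so $(a_\delta d_\delta)^n\equiv p^n\pmod{p^{n+3}}$ and hence $\big(\tfrac{a_\delta}{d_\delta}\big)^n\equiv p^n d_\delta^{-2n}\pmod{p^{n+2}}$, since $d_\delta\in\ZZ_p^\times$. Because $\langle\cdot\rangle=x\omega^{-1}$ we have $\langle d\rangle^{-2n}\omega^{-2n}(d)=d^{-2n}$, and $\psi_m(d)\langle d\rangle^w$ is a unit in $A^\circ$; combining these, each summand is congruent modulo $p^{n+2}$ to $p^n\psi_m(d_\delta)\langle d_\delta\rangle^{w-2n}\omega^{-2n}(d_\delta)$, which is $p^n$ times the $(i',i)$-entry of $\gothU_p^\cl(\psi_{m,w-2n}\omega^{-2n})$. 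This is the ``Moreover'' assertion. Since in addition $\langle d\rangle^{w-2n}\equiv 1\pmod p$, one gets $\gothU_p^\cl(\psi_{m,w-2n}\omega^{-2n})\equiv\gothU_p^\cl(\psi_m\omega^{-2n})\pmod p$, and feeding this back places the $(n,n)$-block of $\gothU_p^\calB(\kappa)-\gothU_p^{\cl,\infty}$ in $p^{n+1}\rmM_t(A^\circ)$ — the remaining, diagonal, case of $\Err_p$. For $\gothT_l$ only the coarser diagonal bound is needed: with $v(a)=0$ and $a_\delta d_\delta=l+b_\delta c_\delta\equiv l\pmod p$, the $(n,n)$-block of $\gothT_l^\calB(\kappa)$ is $\equiv l^n\gothT_l^\cl(\psi_m\omega^{-2n})\pmod p$, which matches the diagonal of $\mathbf{Err}$.

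I expect the main obstacle to be the bookkeeping in this last step — matching ``the $(n,n)$-diagonal block of the infinite block matrix'' with ``$p^n$ times the classical weight-$2$ $U_p$-operator for the precisely twisted-and-deformed character $\psi_{m,w-2n}\omega^{-2n}$'' — rather than any single difficult estimate. One must keep track that all the characters in play see only the lower-right entry $d$ of $\delta_p$, that the global elements $\delta_{i,j}$ are literally shared between the overconvergent and weight-$2$ computations, and that both the $p^2a^nA^\circ$ error term inherited from Lemma~\ref{L:congruence} and the approximation $(a_\delta d_\delta)^n\approx p^n$ remain within $p^{n+2}$; the hypothesis $m\geq 4$ enters through Lemma~\ref{L:congruence}, and this is where the argument degrades for $m=3$ (see Remark~\ref{R:m=3}).
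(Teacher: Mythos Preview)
Your proposal is correct and follows essentially the same approach as the paper: both reduce to Lemma~\ref{L:congruence} applied to each operator $||_\kappa\delta_p$ from Proposition~\ref{P:explicit Up}, use that the same global elements $\delta_{i,j}$ appear in the classical and overconvergent computations, and for the diagonal blocks rewrite $(a/d)^n$ via $ad\equiv p$ (resp.\ $l$) $\pmod{p^m}$ together with $d^{-2}=\omega^{-2}(d)\langle d\rangle^{-2}$. Your write-up is in fact somewhat more explicit about the block indexing and the binomial estimate for $(ad)^n$, but the underlying argument is the same.
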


\begin{proof}
Note that the global elements $\delta$ appearing in the matrix of $\gothU_p$ or $\gothT_l$ for $l \notin \calS$ in Proposition~\ref{P:explicit Up} are the same for classical or overconvergent automorphic forms for all characters.
So to prove (1) and (2), it suffices to estimate the difference between the actions of each relevant $ \delta_p$ on $S_\calB^{D, \dagger}(U; \kappa)$ and on the completed direct sum $\widehat \bigoplus_{n \geq 0} S_2^D(U; \psi_m\omega^{-2n}; \omega^n)$.  (Note that $l^{r} \cdot \gothT_l^\cl(\psi_m \omega^{-2r})$ is congruent modulo $p$ to the action of $T_l$ on the space of classical automorphic forms $S_2^D(U; \psi_m\omega^{-2r}; \omega^r)$.)

For $l \notin\calS$, Proposition~\ref{P:explicit Up} implies that, for every $\delta_p =\big(\begin{smallmatrix}a&b\\c&d \end{smallmatrix}\big)$ appearing in the expression of  $\gothT_l^\calB(\kappa)$, we have  $a, d \in \ZZ_p^\times$, $b, c \in \ZZ_p$, and  $ad-bc=l$; so we have $a d \equiv l \pmod {p^m}$.
By Lemma~\ref{L:congruence}, $||_\kappa \delta_p$ is, modulo the expression \eqref{E:error matrix} but with $t=1$, congruent to the infinite diagonal matrix with diagonal elements
\[
\psi_m(d)\langle d\rangle^w,\ \psi_m(d)\tfrac ad\langle d\rangle^w,\
\psi_m(d)(\tfrac ad)^2\langle d\rangle^w,\ \dots
\]
 which is the same as
 \[
 \psi_m(d),\ l\tfrac{\psi_m(d)}{d^2},\ l^2
\tfrac{\psi_m(d)}{d^4},\ \dots
\]
 modulo $p$; it is further the same as
 \[\psi_m(d),\ l \psi_m(d) \omega^{-2}(d),\ l^2 \psi_m(d) \omega^{-4}(d),\ \dots
 \]
 modulo $p$.
This is the same as the contribution of $\delta_p$ to the matrix
\[
\gothT_l^{\cl, \infty} = \bigoplus_{r \geq 0} l^r \cdot \gothT_l^\cl(\psi_m\omega^{-2r}).
\]
This concludes the proof of (1).

(2) can be checked similarly:
for each $\delta_p =\big(\begin{smallmatrix}a&b\\c&d \end{smallmatrix}\big)$ appearing in the expression of  $\gothU_p^\calB(\kappa)$, we have  $a \in p\cdot \ZZ_p^\times$, $ d \in \ZZ_p^\times$, $b, c \in \ZZ_p$, and $a d \equiv p \pmod {p^m}$.
Using Lemma~\ref{L:congruence} as well as the congruence $\frac ad \equiv \frac{p}{d^2} = p \omega^{-2}(d)\langle d\rangle^{-2} \pmod {p^3}$, we conclude (2) in the same way as above.
\end{proof}

We now proceed to prove Theorem~\ref{T:theorem B}.

\begin{notation}
Put $q = 1$ if $p=2$ and $q=\frac{p-1}{2}$ if $p >2$.

We write the characteristic series of $U_p$ acting on $S_\calB^{D, \dagger}(U; \kappa)$ as
\[
\Char(\gothU_p^\calB(\kappa), S_\calB^{D, \dagger}(U; \kappa)\big) = 1+ c_1(w) X + c_2(w) X^2 + \cdots \in 1+ \calO\langle w\rangle \llbracket X\rrbracket.
\]
\end{notation}

\begin{theorem}
\label{T:sharp Hodge bound}
Assume $m \geq 4$ as before.
We have the following results regarding the Newton polygon.
\begin{enumerate}
\item
 For any $w_0 \in \calW(x\psi_m; p^{-1})$, the Newton polygon of the power series $1+ c_1(w_0)X + \cdots $ lies above the polygon starting at $(0,0)$  with slopes given by
\begin{equation}
\label{E:slopes of improved Hodge polygon}
\bigcup_{n=0}^\infty \bigcup_{r=0}^ {q-1}\big\{
\alpha_0(\psi_m\omega^{-2r})+qn+r, \dots, \alpha_{t-1}(\psi_m\omega^{-2r})+qn+r\big\}.
\end{equation}

\item
For each $n \in \NN$, let $\lambda_n$ denote the sum of $n$ smallest numbers in \eqref{E:slopes of improved Hodge polygon}.
Then
\[
c_{kt}(w) \in p^{\lambda_{kt}} \cdot \calO\langle w\rangle^\times, \quad \textrm{for every }k \in \NN.
\]
\item
For any $w_0 \in \calW(x\psi_m; p^{-1})$, the  Newton polygon of the power series $1+ c_1(w_0)X + \cdots $ passes through the point $(kt, \lambda_{kt})$ for each $k \in \NN$ (which lies on the Hodge polygon in (1)).
In particular, the $n$th slope of this Newton polygon belongs to $\big[ \lfloor \frac nt \rfloor, \lfloor \frac nt \rfloor +1 \big]$.
\end{enumerate}
\end{theorem}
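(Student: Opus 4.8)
\emph{Proof sketch.} The plan is to put the infinite matrix of $\gothU_p$ on $S^{D,\dagger}_\calB(U;\kappa)$ into a normal form and then read off (1)--(3) in turn. By Proposition~\ref{P:Tl Up overconvergent equiv classical}(2), $\gothU_p^\calB(\kappa)$ is a block matrix with $t\times t$ blocks indexed by $n\ge 0$, it differs from $\gothU_p^{\cl,\infty}=\Diag\big(\gothU_p^\cl(\psi_m),p\gothU_p^\cl(\psi_m\omega^{-2}),\dots\big)$ by an element of $\Err_p$, and its $(n,n)$-block is $p^n\gothU_p^\cl(\psi_{m,w-2n}\omega^{-2n})$ modulo $p^{n+2}\rmM_t(A^\circ)$. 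First I would conjugate by $\Diag(B_0,B_1,\dots)$, where $B_n\in\GL_t(\calO)$ depends only on $n\bmod q$ (periodicity of $\omega^{-2n}$) and is the base change of Corollary~\ref{C:Hodge polygon less eq 1} for the character $\psi_m\omega^{-2n}$; this preserves $\Err_p$ and replaces the $(n,n)$-block by $p^n\gothU_p^{\cl,\bfe}(\psi_{m,w-2n}\omega^{-2n})$ modulo $p^{n+2}\rmM_t(A^\circ)$. In the resulting matrix the row attached to the $i$-th basis vector of the $n$-th block has all entries divisible by $p^{\,n+\alpha_i(\psi_m\omega^{-2n})}$: inside the $(n,n)$-block this is Corollary~\ref{C:Hodge polygon less eq 1}, and in the other blocks one only gains, because every $\alpha_i(\cdot)\in[0,1]$ and the relevant $\Err_p$-blocks carry $p^{n+1}$ or more.

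\emph{Part (1).} Using the displayed row divisibilities and Subsection~\ref{S:Hodge v.s. Newton}(5), the Hodge polygon of the specialized matrix lies above the polygon whose non-decreasingly ordered slopes are $\{\,n+\alpha_i(\psi_m\omega^{-2n})\mid n\ge 0,\ 0\le i\le t-1\,\}$. Writing $n=qn'+r$ with $0\le r<q$ and using $\omega^{-2n}=\omega^{-2r}$, this multiset is exactly \eqref{E:slopes of improved Hodge polygon}; since the Newton polygon lies above the Hodge polygon (Subsection~\ref{S:Hodge v.s. Newton}(7)), this gives (1).

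\emph{Part (2).} I would expand $c_{kt}(w)=(-1)^{kt}\sum_{|S|=kt}\det\big((\gothU_p^\calB(\kappa))_S\big)$, the sum over size-$kt$ index sets. Since the row attached to the $i$-th vector of the $n$-th block has valuation $n+\alpha_i(\psi_m\omega^{-2n})\in[n,n+1]$, every $S$ contributes with valuation $\ge\lambda_{kt}$, with equality only for the \emph{minimizing} sets $S$: those containing all rows of valuation $<k$ together with the correct number of rows of valuation exactly $k$ (such rows occur only in blocks $k-1$ and $k$). For a minimizing $S$, dividing each chosen row by its $p$-power and reducing mod $\varpi$ makes the below-diagonal blocks vanish (an $\Err_p$-block carries $p^{n+2}$, which dominates $p^{\,n+\alpha_i}\le p^{n+1}$), so the reduced matrix is block upper triangular and its determinant is a product of principal minors of the reductions of $\overline\gothU_p^{\cl,\bfe}(\psi_{m,w-2n}\omega^{-2n})$, $n=0,\dots,k$. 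A complementary-minor (Jacobi) bookkeeping of the sum over all minimizing $S$ then shows that $c_{kt}(w)/p^{\lambda_{kt}}$ reduces mod $\varpi$ to $\pm\prod_{n=0}^{k-1}\det\overline\gothU_p^{\cl,\bfe}(\psi_{m,w-2n}\omega^{-2n})$, which by Lemma~\ref{L:determine of Up deformed} is a nonzero constant of $\FF$ \emph{with no $w$-terms}. Hence $c_{kt}(w)\in p^{\lambda_{kt}}\calO\langle w\rangle^\times$.

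\emph{Part (3).} Fix $w_0$ and let $\NP,\HP$ denote the Newton and Hodge polygons of the specialized series $1+c_1(w_0)X+\cdots$, and $a_n=\NP(n+1)-\NP(n)$ its $n$-th slope. From (1), $\NP(kt)\ge\lambda_{kt}=\HP(kt)$; from (2), $\NP(kt)\le v(c_{kt}(w_0))=\lambda_{kt}$; so the Newton polygon passes through $(kt,\lambda_{kt})$, which by (1) is a point of the Hodge polygon. For the final assertion, fix $n$ with $kt\le n<(k+1)t$, so $\lfloor n/t\rfloor=k$. By convexity of the Newton polygon, $a_{kt}\le a_n\le a_{(k+1)t-1}$. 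On one side $a_{kt}=\NP(kt+1)-\NP(kt)\ge\HP(kt+1)-\HP(kt)\ge k$, since among the slopes \eqref{E:slopes of improved Hodge polygon} at most $kt$ are $<k$, so the $(kt+1)$-st is $\ge k$. On the other, using $\NP((k+1)t)=\lambda_{(k+1)t}$ and $\NP((k+1)t-1)\ge\HP((k+1)t-1)$, one gets $a_{(k+1)t-1}\le\HP((k+1)t)-\HP((k+1)t-1)\le k+1$, since at least $(k+1)t$ of the slopes \eqref{E:slopes of improved Hodge polygon} are $\le k+1$. Thus $a_n\in[k,k+1]=[\lfloor n/t\rfloor,\lfloor n/t\rfloor+1]$.

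\emph{The hard part.} Everything except Part (2) is formal manipulation of polygons. The crux is the leading-coefficient computation in (2): a priori the reduction of $\overline\gothU_p^{\cl,\bfe}(\psi_{m,w-2n}\omega^{-2n})$ may carry $w$ in individual rows, and the sum over minimizing $S$ must be organized so that only the full determinants — which \emph{are} $w$-free by Lemma~\ref{L:determine of Up deformed} — survive. Controlling this uniformly in the family parameter $w$ is precisely the purpose of the deformed pairing of Subsection~\ref{S:variant of duality pairing}; it is also where the hypothesis $m\ge 4$ is needed, via the estimate $(1+p^{m-1}z)^w\in 1+p^3zw\calO\langle pz,w\rangle$ of Lemma~\ref{L:congruence}.
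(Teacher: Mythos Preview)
Your sketch follows the paper's proof. Parts (1) and (3) match it exactly: the block-diagonal base change via Corollary~\ref{C:Hodge polygon less eq 1} produces the row divisibilities $p^{n+\alpha_i(\psi_m\omega^{-2n})}$ giving the Hodge bound, and once (2) pins the Newton polygon to the points $(kt,\lambda_{kt})$ your convexity argument for the slope bounds is correct.

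For Part (2) you take the same route as the paper but there is a real gap at the ``complementary-minor (Jacobi) bookkeeping'' step. Discarding below-diagonal blocks is justified (any permutation using a strictly-below-diagonal entry contributes valuation $\ge\lambda_{kt}+1$, and by a cycle argument every non-block-diagonal permutation uses such an entry), so $c_{kt}$ reduces to the $X^{kt}$-coefficient of $\prod_n\det(I-XM_n)$. But this is \emph{not} simply $\pm\prod_{n<k}\det M_n$: there are cross-terms such as
\[
[X^{t-1}]\det(I-XM_{k-1})\cdot[X^1]\det(I-XM_k)\cdot\prod_{n<k-1}(-1)^t\det M_n,
\]
whose valuation is $\ge\lambda_{kt}+1+\alpha_0(\psi_m\omega^{-2k})-\alpha_{t-1}(\psi_m\omega^{-2(k-1)})$, hence exactly $\lambda_{kt}$ when $\alpha_{t-1}(\psi_m\omega^{-2(k-1)})=1$ and $\alpha_0(\psi_m\omega^{-2k})=0$. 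This boundary case \emph{does} occur (e.g.\ Example~\ref{Ex:explicit slopes rare examples}, where the Hodge slopes are $0,0,0,\tfrac12,\tfrac12,\tfrac12,1,1,1$), so the extra minimizing $S$'s you flagged give genuine contributions---products of \emph{proper} principal minors of $\overline M_{k-1}$ and $\overline M_k$---and no Jacobi-type identity collapses their sum to $\det\overline M_{k-1}$. The paper's own proof simply asserts the product congruence after noting below-diagonal blocks vanish, so you are not missing a trick the paper supplies; but as written, neither your sketch nor the paper's argument explains why these boundary contributions are harmless.
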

\begin{proof}
(1) Recall from Proposition~\ref{P:Tl Up overconvergent equiv classical}(2), the matrix for $U_p$ satisfies
\[
\gothU_p^\calB(\kappa) - \gothU_p^{\cl, \infty} \in \Err_p.
\]
We now uses the basis \[
e_0(\psi_m), \dots, e_{t-1}(\psi_m), pe_0(\psi_m\omega^{-2})z, \dots, pe_{t-1}(\psi_m \omega^{-2})z, p^2e_0(\psi_m\omega^{-4}) z^2, \dots
\]
As a result, we need to conjugate both $\gothU_p^\calB(\kappa)$ and $\gothU_p^{\cl, \infty}$ by an infinite block diagonal matrix whose block entries are $t\times t$-matrices in $\rmM_t(\calO)$. Thus, the action of $U_p$ is given by a new matrix $\gothU_p^{\calB, \bfe}$ which is congruent to
\[
\Diag\big( \gothU_p^{\cl, \bfe}(\psi_m),\ p\cdot  \gothU_p^{\cl, \bfe}(\psi_m \omega^{-2}),\ p^2\cdot  \gothU_p^{\cl, \bfe}(\psi_m \omega^{-4}),\ \dots \big)
\]
modulo $\Err_p$.
In particular, for $i=0, \dots, t-1$, the $((qn+r)t+i)$th row of $\gothU_p^{\calB, \bfe}$ is entirely divisible by $p^{\alpha_i(\psi_m \omega^{-2r}) + qn+r}$.
Therefore the Hodge polygon of $\gothU_p^{\calB, \bfe}$ lies above the Hodge polygon with slopes given by \eqref{E:slopes of improved Hodge polygon}. This improves the result of Theorem~\ref{T:weak Hodge polygon} (when $m \geq 4$).

(2) By the proof of (1), we know that $c_{kt}(w) \in p^{\lambda_{kt}} \cdot \calO\langle w\rangle$ for each $k \in \NN$.
It suffices to show that the reduction of $p^{-\lambda_{kt}} c_{kt}(w)$ modulo $\varpi$ lies in $\FF^\times \subset \FF[w]$.

Note that, if we think of $\gothU_p^{\calB, \bfe}$ as an infinite block matrix with $t \times t$-matrices as entries, then its $(i,j)$-block for $i>j$ is entirely divisible by $p^{i+2}$. So it will not contribute to the reduction of  $p^{-\lambda_{kt}} c_{kt}(w)$ modulo $\varpi$.
In other words, if $M_n$ denotes the $t\times t$-matrix appearing as the $(n,n)$-block entry of $\gothU_p^{\calB, \bfe}$, then
\[
p^{-\lambda_{kt}} c_{kt}(w)\ \equiv \ p^{-\lambda_{kt}} \prod_{n =0}^{k-1} \det (M_n) \pmod \varpi.
\]

Using the congruence relation discussed in (1) and Proposition~\ref{P:Tl Up overconvergent equiv classical}(2), we see that the diagonal $t\times t$-matrices are exactly given by
\[
\gothU_p^{\cl ,\bfe}(\psi_{m,w})  \textrm{ modulo }p^2, \quad p \cdot \big( \gothU_p^{\cl ,\bfe}(\psi_{m,w-2}\omega^{-2})  \textrm{ modulo }p^2\big),  \quad p^2 \cdot \big( \gothU_p^{\cl ,\bfe}(\psi_{m,w-4}\omega^{-4})  \textrm{ modulo }p^2\big), \dots
\]
Consequently, the reduction of $p^{-\lambda_{kt}} c_{kt}(w)$ modulo $\varpi$ is the same as the product
\[
\prod_{n=0}^{k-1} \det \big( \overline \gothU_p^{\cl, \bfe}(\psi_{m,w-2n} \omega^{-2n})\big) \textrm{ mod } \varpi.
\]
By Proposition~\ref{P:determine of Up deformed}, each factor belongs to $\FF^\times$ and so is the product.  (2) follows from this.

(3) Since (2) implies that the Newton polygon agrees with the Hodge polygon at points $(kt, \lambda_{kt})$ for all $k \geq 0$, the Newton polygon of the power series $1+ c_1(w_0) X+ \cdots $ is confined between the Hodge polygon of (1) and the polygon with vertices $(kt, \lambda_{kt})$.  (3) is immediate from this.
\end{proof}

Theorem~\ref{T:theorem B} is a corollary of Theorem~\ref{T:sharp Hodge bound} using the Jacquet--Langlands correspondence~\eqref{E:Jaquet-Langlands classical}.

\begin{remark}
\label{R:m=3}
Assume $p>2$.
When $m=3$, Theorem~\ref{T:sharp Hodge bound}(1) still holds.
But the argument in (2) fails in that, for example, there might be $p^2w$ terms in $(1,0)$-block entry for the matrix $\gothU_p^{\cl, \bfe}$. Apriori, they may have nontrivial contribution to the reduction of $p^{-\lambda_{kt}}a_{kt}(w)$ modulo $\varpi$.
So we can only conclude that the reduction is a unit in $\FF\llbracket w \rrbracket$ but not necessarily a unit in $\FF\langle w \rangle$.
The slope estimate would then only work over some \emph{open} disk of radius $p^{-1}$.
Nonetheless, we still expect our theorem continue to hold as long as $m \geq 2$.
It would be interesting to know how to extend our argument to the case $m=2,3$.
\end{remark}

\begin{corollary}
\label{C:precise NP computation}
Assume $m \geq 4$ as before. Let $\HP(\psi_m)$ (resp. $\NP(\psi_m)$) denote the Hodge polygon (resp. Newton polygon) of the $U_p$-action on $S_2^D(U; \psi_m)$; we write $\HP(\psi_m)(i)$ (resp. $\NP(\psi_m)$(i)) for the $y$-coordinate of the polygon when the $x$-coordinate is $i$.

Fix $r=0, \dots, q-1$.
Suppose that $(s_0, \NP(\psi_m\omega^{-2r})(s_0))$ is a \emph{vertex} of the Newton polygon $\NP(\psi_m\omega^{-2r})$ and suppose that
\begin{equation}
\label{E:NP<HP+1}
\NP(\psi_m\omega^{-2r})(s) < \HP(\psi_m\omega^{-2r})(s-1) + 1
\footnote{Note that the Newton polygon is evaluated at $s$ and the Hodge polygon is evaluated at $s-1$.} \textrm{ for all } s = 1, \dots, s_0.
\end{equation}
Then for any $s=0, \dots, s_0$, any $n \in \ZZ_{\geq 0}$, and any $w_0 \in \calW(x\psi_m; p^{-1})$, the $( qnt+ rt +s)$th slope of the power series $1+ c_1(w_0)X + \cdots$ is the $s$th $U_p$-slope on $S_2^D(U; \psi_m \omega^{-2r})$ plus $ qn + r $.
\end{corollary}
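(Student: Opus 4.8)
\emph{Proof proposal.} Write $N:=qn+r$ and, for $0\le s\le t$, set $\mu_s:=\lambda_{Nt}+Ns+\NP(\psi_m\omega^{-2r})(s)$, and let $\beta_0\le\cdots\le\beta_{t-1}$ be the slopes of $\NP(\psi_m\omega^{-2r})$. The plan is to work with the infinite block matrix $\gothU_p^{\calB,\bfe}$ from the proof of Theorem~\ref{T:sharp Hodge bound} and control the valuations of the coefficients $c_\ell(w)$ in the window. First I would recall the squeeze: by Theorem~\ref{T:sharp Hodge bound} the Newton polygon of $1+c_1(w_0)X+\cdots$ passes through $(Nt,\lambda_{Nt})$ and $((N+1)t,\lambda_{(N+1)t})$ and lies above the Hodge polygon of \eqref{E:slopes of improved Hodge polygon}; one checks that the convex chain $s\mapsto\mu_s$ joins these two points, using $\beta_{t-1}\le 1$ (Lemma~\ref{L:only new forms}) and $\sum_j\alpha_j(\psi_m\omega^{-2r})=\sum_j\beta_j$. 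Thus it suffices to prove two things: (i) for every $\ell$, $v(c_\ell(w))$ is at least the value at $\ell$ of the convex function that equals $\mu_\bullet$ over the window $N$ and equals the improved Hodge polygon elsewhere; and (ii) for $s=0,\dots,s_0$, the normalized coefficient $p^{-\mu_s}c_{Nt+s}(w)$ reduces modulo $\varpi$ to a $w$-free unit of $\FF$. Granting these, $\NP_{w_0}$ is forced onto the chain $\mu_\bullet$ over $[Nt,Nt+s_0]$, and since $(s_0,\NP(\psi_m\omega^{-2r})(s_0))$ is a classical Newton vertex and the classical Newton polygon is linear between vertices, the slope formula follows for $s=0,\dots,s_0$; translating through the Jacquet--Langlands isomorphism \eqref{E:Jaquet-Langlands classical} gives the stated result.

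For the coefficient estimate I would expand $c_{Nt+s}(w)=\sum_{|S|=Nt+s}\det(\gothU_S)$ and single out the dominant, ``block-diagonal'' family: sets $S$ containing all rows and columns of blocks $0,\dots,N-1$ together with an $s$-subset inside block $N$. Summing over this family gives $\pm\det(\gothU_{<N})\cdot c_s(\gothU_{N,N})$ up to Schur-complement corrections, where $c_s(\cdot)$ denotes the degree-$s$ coefficient of the characteristic polynomial and $\gothU_{<N}$ the submatrix on blocks $0,\dots,N-1$. By Theorem~\ref{T:sharp Hodge bound}(2) and Lemma~\ref{L:determine of Up deformed}, $\det(\gothU_{<N})$ is $p^{\lambda_{Nt}}$ times a genuine ($w$-free) unit mod $\varpi$. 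For the other factor, Proposition~\ref{P:Tl Up overconvergent equiv classical}(2) lets me write $\gothU_{N,N}=p^N\bigl(\gothU_p^{\cl,\bfe}(\psi_m\omega^{-2r})+pE\bigr)$ for some $E\in\rmM_t(A^\circ)$ — the single factor of $p$ absorbs both the $p^{N+2}$-perturbation and the difference between the deformed and undeformed classical matrices, since $\langle d\rangle^{w}\equiv 1\pmod p$ and all $\alpha_j\le 1$. Hence $c_s(\gothU_{N,N})=p^{Ns}\,c_s\bigl(\gothU_p^{\cl,\bfe}(\psi_m\omega^{-2r})+pE\bigr)$, and every term of this last coefficient other than $c_s(\gothU_p^{\cl,\bfe}(\psi_m\omega^{-2r}))$ carries a factor $p$ and uses at most $s-1$ rows of the classical matrix, hence has valuation $\ge 1+\HP(\psi_m\omega^{-2r})(s-1)$, which by hypothesis \eqref{E:NP<HP+1} strictly exceeds $\NP(\psi_m\omega^{-2r})(s)$. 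Therefore $v(c_s(\gothU_{N,N}))\ge Ns+\NP(\psi_m\omega^{-2r})(s)$ always, with equality (and $w$-free unit leading term, namely the leading coefficient of the classical characteristic polynomial) whenever $s$ is a classical Newton vertex, in particular for $s=s_0$; this gives (i) on the window $N$ and, together with linearity of the classical Newton polygon between vertices, gives (ii).

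The main obstacle is showing that no competing minor, and no Schur correction, beats valuation $\mu_s$. A minor of size $Nt+s$ that trades a row of some block below $N$ for a row of a block above $N$ has base valuation at least $\lambda_{Nt}+Ns+\HP(\psi_m\omega^{-2r})(s-1)+1$, already above $\mu_s$ by \eqref{E:NP<HP+1}; but one must also rule out minors routing through the off-diagonal blocks $\gothU_{i,j}$, $i\neq j$: the lower-triangular ones ($i>j$) cost $p^{i+2}$, a comfortable surplus, whereas the upper-triangular ones ($i<j$) cost only $p^{j}$, comparable to the diagonal rows of block $j$, so the excess only appears after accounting for the compensating expensive entry that the determinant structure forces elsewhere. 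Carrying out this combinatorial estimate for the infinite matrix $\gothU_p^{\calB,\bfe}$, uniformly in $w$ and in the window index $N$, and verifying that the lone inequality \eqref{E:NP<HP+1} always supplies the needed slack up to index $s_0$, is the technical heart; the behavior exactly at the boundary index $s_0$, where $s_0$ being a classical Newton vertex does the work that \eqref{E:NP<HP+1} would otherwise do, needs a little extra care.
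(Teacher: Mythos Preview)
Your proposal is correct and follows essentially the same route as the paper. The paper compresses the entire minor analysis into the single congruence
\[
c_{qnt+rt+s}(w_0)\ \equiv\ \Big(\prod_{a=0}^{qn+r-1}p^{a}\det\gothU_p^{\cl,\bfe}(\psi_m\omega^{-2a})\Big)\cdot p^{(qn+r)s}\cdot\big(\text{coeff.\ of }X^s\text{ in }\Char(U_p;S_2^D(\psi_m\omega^{-2r}))\big)
\]
modulo $p^{\lambda_{qnt+rt+s-1}+1}$, justified by pointing back to the block estimates already established in the proof of Theorem~\ref{T:sharp Hodge bound}; it then reads off the same dichotomy you reach (either no Newton vertex, or the valuation is classical), and concludes at the vertex $s_0$. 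Your explicit bookkeeping of the lower-triangular ($p^{i+2}$) versus upper-triangular ($p^{j}$) off-diagonal costs is exactly what underlies that congruence, so you are unpacking rather than departing from the paper's argument. Two small points: the closing appeal to Jacquet--Langlands \eqref{E:Jaquet-Langlands classical} is unnecessary here, since the corollary is stated and proved entirely on the quaternionic side; and the ``Schur-complement corrections'' you mention do not actually arise---the relevant expansion is just the sum over principal minors, and the cross-block terms are absorbed directly into your error bound rather than into a Schur complement.
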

\begin{proof}
As in the proof of Theorem~\ref{T:sharp Hodge bound}(2),
$c_{qnt+ rt +s}(w_0)$ is divisible by
$
p^{\lambda_{qnt+ rt +s}}.
$
The approximation in the proof of Theorem~\ref{T:sharp Hodge bound}(1) also implies that, modulo $p^{\lambda_{qnt+ rt +s-1}} \cdot p$, this number is equal to
\[
\Big(
\prod_{a = 0}^{nq+r-1} p^a\det \gothU_p^{\cl, \bfe}(\psi_m \omega^{-2a}) \Big) \cdot p^{(nq+r)s} \cdot \ \big( \textrm{coefficient of $X^s$ in } \Char(U_p; S_2^D(\psi_m \omega^{-2r}))\, \big).
\]
So under the hypothesis \eqref{E:NP<HP+1}, this implies that, for each $s$,
\begin{itemize}
\item
if $s$ is a vertex of $\NP(\psi_m\omega^{-2r})$, the valuation of $c_{qnt+rt+s}(w_0)$ is determined by the Newton Polygon of the classical forms, i.e.
\[
v(c_{qnt+ rt +s}(w_0)) = v(c_{qnt+rt}(w_0)) + \NP(\psi_m\omega^{-2r})(s) + (qn+r)s,
\]

\item
if $s$ is not a vertex of $\NP(\psi_m\omega^{-2r})$, the valuation of $c_{qnt+rt+s}(w_0)$ is either determined by the Newton Polygon of the classical forms or greater than or equal to $\lambda_{qnt+rt+s-1}+1$; in either case, we have
\[
v(c_{qnt+ rt +s}(w_0)) \geq v(c_{qnt+rt}(w_0)) + \NP(\psi_m\omega^{-2r})(s) + (qn+r)s.
\]
\end{itemize}
Since $(s_0, \NP(\psi_m\omega^{-2r})(s_0))$ is a vertex, the $(qnt+ rt +s)$th slope, for $s=0, \dots, s_0$, of the power series $ 1+ c_1(w_0)X+ \cdots$ agrees with the $s$th slope of $\Char(U_p; S_2^D(\psi_m \omega^{-2r}))$ plus the normalizing factor $qn+r$.
\end{proof}

\begin{remark}
We emphasize that the sequence given by $s$th $U_p$-slope on $S^D_2(U; \psi_m \omega^{-2r})$ plus $qn+r$, as $n$ increases, is an arithmetic progression with common difference $q$ (\emph{but not $1$}).  This is due to the periodic appearance of the powers of the Teichm\"uller character.
This agrees with the computation of Kilford and McMurdy \cite{kilford, kilford-mcmurdy} in some special cases (with $m=2$), where the common difference is $2$ when $p=5$, and is $\frac 32$ (which can be further broken up into two arithmetic progressions with common difference $3$) when $p=7$.
\end{remark}

\begin{example}
\label{Ex:explicit slopes rare examples}
We provide an example to better understand the strength of \eqref{E:NP<HP+1}.
Consider the explicit example in Section~\ref{Section:explicit example} with $D = \QQ\langle \i, \j\rangle$ and $p=3$.  We first consider the $m=3$ case where we
 take $U$ to be
\begin{equation}
\label{E:p=3,m=3}
U= D^\times(\ZZ_2) \times \prod_{l \neq 2,3}\GL_2(\ZZ_l) \times \begin{pmatrix}
\ZZ_3^\times & \ZZ_3\\
27 \ZZ_3 & 1+ 3\ZZ_3
\end{pmatrix}
\end{equation}
and $\psi_3$ to be a character of $\ZZ_3^\times$ of conductor $27$.
Then $S_2^D(U; \psi_3)$ is $3$-dimensional, and the action of $U_3$ on the a basis is given by
\begin{equation}
\label{E:M3}
\begin{pmatrix}
\zeta_9 &\zeta_9^2& \zeta_9^8\\
\zeta_9^4 &\zeta_9^2& \zeta_9^5\\
\zeta_9^7&\zeta_9^2&\zeta_9^2
\end{pmatrix}.
\end{equation}
Its Newton polygon has slopes $\frac 16, \frac 12$, and $\frac 56$ and the Hodge polygon has slopes $0, \frac 12$, and $1$.

For the case $m=4$, we take $U$ to be as in \eqref{E:p=3,m=3} except the number $27$ is replaced by $81$. We take the character $\psi_4$ to have conductor $81$.
Then $S_2^D(U; \psi_4)$ is $9$-dimensional, and the action of $U_3$ on a basis is given by
\begin{equation}
\label{E:M4}
\begin{pmatrix}
\zeta^{19} &0&0&0&\zeta^2&\zeta^{17}&0&0&0\\
0&0&0&\zeta^{13}&0&0&0&\zeta^{20}&\zeta^{23}\\
0&\zeta^{11}&\zeta^2&0&0&0&\zeta^7&0&0\\
\zeta&0&0&0&\zeta^2&\zeta^8&0&0&0\\
0&0&0&\zeta^{22}&0&0&0&\zeta^{20}&\zeta^{14}\\
0&\zeta^{11}&\zeta^{20}&0&0&0&\zeta^{16}&0&0\\
\zeta^{10}&0&0&0&\zeta^2&\zeta^{26}&0&0&0\\
0&0&0&\zeta^4&0&0&0&\zeta^{20}&\zeta^5\\
0&\zeta^{11}&\zeta^{11}&0&0&0&\zeta^{25}&0&0
\end{pmatrix},
\end{equation}
where $\zeta$ is a primitive $27$th root of unity.  The Newton polygon of this matrix has slopes $\frac{1}{18}, \frac{1}{6}, \frac{5}{18}, \dots, \frac{17}{18}$, and the Hodge polygon has slopes $0, 0, 0, \frac 12, \frac 12, \frac 12, 1, 1, 1$.
In this case, the number $s_0$ in Corollary~\ref{C:precise NP computation} can be taken to be $6$; so we can determine about ``two thirds" of all slopes using Corollary~\ref{C:precise NP computation}.

\end{example}
We now return to the general case.

\begin{theorem}
\label{T:improved main theorem}
Assume $m \geq 4$ as before.
Let $\ord(\psi_m\omega^{-2n})$ denote the dimension of the ordinary part of $S_2^D(U; \psi_m \omega^{-2n})$, or equivalently, the multiplicity of slope $0$ in $\NP(\psi_m \omega^{-2n})$.

Then the spectral variety $\Spc_D \times_\calW \calW(x\psi_m; p^{-1})$ is a disjoint union of subvarieties
\[
X_0,\ X_{(0,1]},\ X_{(1,2]}, \ X_{(2,3]},\ \dots
\]
such that each subvariety is finite and flat over $\calW(x\psi_m; p^{-1})$, and  for any closed point $x \in X_{(n,n+1]}$ (resp. $x \in X_0$), we have $v(a_p(x)) \in (n,n+1]$ (resp. $v(a_p(x))=0$).
Moreover, the degree of $X_{(n,n+1]}$ over $\calW(x\psi_m; p^{-1})$ is exactly
\[
t + \ord(\psi_m \omega^{-2n-2}) - \ord(\psi_m \omega^{-2n}).
\]
In particular, this number depends only on $n \textrm{ mod } q$.
\end{theorem}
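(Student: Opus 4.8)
The plan is to read the decomposition off the two–variable characteristic series
\[
P(w,X)=\Char\big(\gothU_p^\calB(\kappa),\,S_\calB^{D,\dagger}(U;\kappa)\big)=1+c_1(w)X+c_2(w)X^2+\cdots\in 1+X\calO\langle w\rangle\llbracket X\rrbracket,
\]
which by Notation~\ref{N:matrix for Tl and Up} is the defining equation of $\Spc_D\times_\calW\calW(x\psi_m;p^{-1})=\Max(E\langle w\rangle)\times\GG_{m,\rig}$ in the $X$–coordinate. Write $r_N:=N\bmod q$; since $\omega^2$ has order $q$ one has $\psi_m\omega^{-2r_N}=\psi_m\omega^{-2N}$, and I set $d_N:=Nt+\ord(\psi_m\omega^{-2N})$ for $N\ge 0$, so $d_0=\ord(\psi_m)$. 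The first step is to pin down the Newton polygon of $P(w_0,X)$ uniformly in $w_0$: Theorem~\ref{T:sharp Hodge bound}(1) and (3) force this polygon to lie above the Hodge polygon \eqref{E:slopes of improved Hodge polygon}, whose slopes on each $[Nt,(N+1)t]$ lie in $[N,N+1]$, and to pass through every lattice point $(kt,\lambda_{kt})$ at which the two polygons agree; a short convexity argument (if the first segment of the Newton polygon to the right of $Nt$ had slope $<N$ it would sink below the Hodge polygon, and symmetrically just left of $(N+1)t$) then shows that the Newton polygon of $P(w_0,X)$ has all of its slopes on $[Nt,(N+1)t]$ in $[N,N+1]$ for every $w_0$. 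Next I would apply Corollary~\ref{C:precise NP computation} with $s_0=\ord(\psi_m\omega^{-2r})$ for each $r$ — legitimate because $(\ord(\psi_m\omega^{-2r}),0)$ is a vertex of $\NP(\psi_m\omega^{-2r})$ and $\NP(\psi_m\omega^{-2r})(s)=0<\HP(\psi_m\omega^{-2r})(s-1)+1$ for $1\le s\le\ord(\psi_m\omega^{-2r})$ — to conclude that, for every $w_0$, the initial slope–$N$ stretch of the block $[Nt,(N+1)t]$ has horizontal length exactly $\ord(\psi_m\omega^{-2N})$ and is followed by a genuine corner: the next slope is the first positive $U_p$–slope on $S_2^D(U;\psi_m\omega^{-2N})$ shifted up by $N$, hence bounded away from $N$ independently of $w_0$. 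Thus the part of the Newton polygon of slope $\le N$ is exactly $[0,d_N]$, uniformly in $w_0$.

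With the shape of the Newton polygon understood, I would invoke the standard slope–factorization of characteristic series in families (Riesz decomposition; cf.\ \cite{buzzard} and the Hensel–type argument in \cite{davis-wan-xiao}). The corner at $d_N$ together with the rigidity of the coefficient $c_{d_N}(w)$ — namely $c_{d_N}(w)\in p^{\lambda_{d_N}}\calO\langle w\rangle^\times$, which for $\ord(\psi_m\omega^{-2N})=0$ is Theorem~\ref{T:sharp Hodge bound}(2) and in general follows by the same computation, feeding the approximation $\gothU_p^{\calB,\bfe}\equiv\bigoplus_n p^n\gothU_p^{\cl,\bfe}(\psi_{m,w-2n}\omega^{-2n})\pmod{\Err_p}$ of Proposition~\ref{P:Tl Up overconvergent equiv classical}(2) into the coefficient of $X^{d_N}$, where the $\det\overline\gothU_p^{\cl,\bfe}$–factors are handled by Lemma~\ref{L:determine of Up deformed} and the remaining factor is the valuation–zero leading term of the classical characteristic polynomial of $U_p$ on $S_2^D(U;\psi_m\omega^{-2N})$ in degree $\ord(\psi_m\omega^{-2N})$ — gives a factorization $P(w,X)=A_N(w,X)\cdot R_N(w,X)$ over $E\langle w\rangle$ with $A_N$ a degree–$d_N$ polynomial of constant term $1$ that specializes at every $w_0$ to the factor of $P(w_0,X)$ cutting out the eigenvalues of $U_p$–slope $\le N$, and $R_N\in 1+X\calO\langle w\rangle\llbracket X\rrbracket$ capturing slopes $>N$. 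Because slope–$\le N$ roots are among slope–$\le(N+1)$ roots, $A_N\mid A_{N+1}$ in $E\langle w\rangle[X]$, and I would define $X_0$ as the zero locus of $A_0$ and $X_{(n,n+1]}$ as the zero locus of $A_{n+1}/A_n$.

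It remains to check the asserted properties. The union $X_0\sqcup X_{(0,1]}\sqcup X_{(1,2]}\sqcup\cdots$ exhausts $\Spc_D\times_\calW\calW(x\psi_m;p^{-1})$ and the pieces are disjoint since their slope ranges are; each $X_{(n,n+1]}$ is cut out by a polynomial factor of $P$ of $X$–degree $d_{n+1}-d_n$ with unit constant term, all of whose roots lie in the bounded annulus $p^n<|X|\le p^{n+1}$, so it is finite and flat over $\calW(x\psi_m;p^{-1})$ by the standard theory, and $v(a_p(x))\in(n,n+1]$ for all of its closed points — the boundary slopes $N$ and $N+1$ being assigned to the lower box precisely because the cut is made at $d_N$, the right endpoint of the slope–$N$ flat segment; likewise $v(a_p(x))=0$ on $X_0$. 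Finally
\[
\deg X_{(n,n+1]}=d_{n+1}-d_n=t+\ord(\psi_m\omega^{-2(n+1)})-\ord(\psi_m\omega^{-2n})=t+\ord(\psi_m\omega^{-2n-2})-\ord(\psi_m\omega^{-2n}),
\]
and since $\ord(\psi_m\omega^{-2j})$ depends only on $j\bmod q$, this number depends only on $n\bmod q$.

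The step I expect to be the main obstacle is the rigidity of $c_{d_N}(w)$ at the cut points $d_N$ that are not multiples of $t$: one must show that no $w$–term survives in the reduction modulo $\varpi$ of $p^{-\lambda_{d_N}}c_{d_N}(w)$, which goes beyond what Theorem~\ref{T:sharp Hodge bound}(2) directly records and requires controlling how the block approximation of Proposition~\ref{P:Tl Up overconvergent equiv classical}(2) meshes with the classical characteristic polynomial of $U_p$ on $S_2^D(U;\psi_m\omega^{-2N})$ in the relevant degree; this is exactly where the hypothesis $m\ge 4$ is used (through Lemma~\ref{L:determine of Up deformed}). Once this rigidity is secured, the slope–factorization and the degree bookkeeping are routine.
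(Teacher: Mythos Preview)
Your approach is essentially the paper's, just packaged differently: you route through Corollary~\ref{C:precise NP computation} and then invoke a family slope-factorization, whereas the paper goes directly to the congruence and phrases the conclusion as ``$X_{[0,n]}$ is finite flat, hence open and closed.'' The paper's version of your ``rigidity'' step is exactly the pair of estimates
\[
v\big(c_{nt+\ord(\psi_m\omega^{-2n})}(w_0)\big)=\lambda_{nt}+n\cdot\ord(\psi_m\omega^{-2n}),\qquad
v\big(c_{nt+s}(w_0)\big)>\lambda_{nt}+ns\ \text{ for }s>\ord(\psi_m\omega^{-2n}),
\]
obtained from the same block-diagonal approximation you cite from Proposition~\ref{P:Tl Up overconvergent equiv classical}(2); the point is that for $s\ge\ord$ the coefficient of $X^s$ in the classical characteristic polynomial has valuation $0$ exactly at $s=\ord$ and strictly positive beyond, and the determinant factors are units by Lemma~\ref{L:determine of Up deformed}. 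So your identification of the obstacle and its resolution is on the mark.

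One small point to tighten: invoking Corollary~\ref{C:precise NP computation} with $s_0=\ord(\psi_m\omega^{-2r})$ pins the Newton polygon on $[Nt,\,d_N]$ (all slopes equal to $N$) and shows it touches the Hodge bound at $d_N$, but it does not by itself force a \emph{corner} there---you still need that the very next slope exceeds $N$. That is precisely the second inequality above (equivalently: the point $(d_N,\lambda_{d_N})$ is a vertex, not merely a touching point), and it is what makes the slope-factorization go through at $d_N$ rather than just at the multiples of $t$. This comes for free from the same congruence you already use for $c_{d_N}(w)$, applied at $s=\ord+1,\ord+2,\dots$, so no new idea is required; but your write-up attributes it to Corollary~\ref{C:precise NP computation} alone, which slightly overreaches. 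Once you add that one line, your argument and the paper's coincide.
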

\begin{proof}
It suffices to show that, for a fixed $n \in \ZZ_{\geq 0}$ and any $w_0 \in \calW(x\psi_m; p^{-1})$, the number of slopes of $1+ c_1(w_0) X + \cdots $ less than or equal to $n$, is \emph{independent of $w_0$} and is equal to $nt + \ord(\psi_m\omega^{-2n})$.
If so, the subspace
\[
X_{[0,n]} = \big\{ (x, w_0) \in \Spc_D \times_\calW \calW(x\psi_m; p^{-1})\,|\, v(a_p(x)) \leq n \big\}
\]
is finite and flat of degree $nt + \ord(\psi_m\omega^{-2n})$ over $\calW(x\psi_m; p^{-1})$, and it would follow that $X_{[0,n]}$ is both open (by definition) and closed (by finiteness) in $\Spc_D \times_\calW \calW(x\psi_m; p^{-1})$, and hence a union of connected components.  The theorem would then follow from this.

To estimate the number of slopes less than or equal to $n$, we use the Hodge polygon lower bound in Theorem~\ref{T:sharp Hodge bound}.  It then suffices to prove that
\begin{equation}
\label{E:ordinary condition}
\begin{split}
&v(c_{nt + \ord(\psi_m \omega^{-2n})}(w_0)) = \lambda_{nt} + n \cdot \ord(\psi_m \omega^{-2n}), \textrm{ and }\\
&\qquad v(c_{nt + s}(w_0)) > \lambda_{nt} + ns \textrm{ for }s >\ord(\psi_m \omega^{-2n}).
\end{split}
\end{equation}
We again go back to the slope estimate in the proof of Theorem~\ref{T:sharp Hodge bound} (like in the proof of Corollary~\ref{C:precise NP computation}). It is easy to deduce that $c_{nt+s}(w_0)$ for $s \geq \ord(\psi_m \omega^{-2n})$ is congruent to
\[
\big(
\prod_{i = 0}^{n-1} \det \gothU_p^{\cl, \bfe}(\psi_m \omega^{-2i}) \big) \cdot p^{ns} \cdot \ \big( \textrm{coefficient of $X^s$ in } \Char(U_p; S_2^D(\psi_m \omega^{-2n}))\, \big)
\]
modulo $p^{\lambda_{nt} + ns+1}$.
The valuation inequalities \eqref{E:ordinary condition} follow from this congruence relation.
\end{proof}

\begin{remark}
We certainly expect that $X_{(i,i+1]}$ is the disjoint union of $X_{(i,i+1)} \coprod X_{i+1}$ (with the obvious meaning); but we do not know how to prove this because, apriori, the error terms from $w$ might present an obstruction.
\end{remark}

\begin{remark}
\label{R:global decomposition under condition}
Using Corollary~\ref{C:precise NP computation} and the argument above, we can show that, when there is a vertex $(s_0, \NP(\psi_m \omega^{-2r})(s_0))$ of the Newton polygon $\NP(\psi_m \omega^{-2r})$ as in Corollary~\ref{C:precise NP computation}, we can get a further decomposition of $X_{(qn+r, qn+r+1]}$ separating those points whose $a_p$-slopes are first $s_0$ $U_p$-slopes on $S_2^D(U; \psi_m \omega^{-2r})$ plus $qn+r$.
\end{remark}

\section{Techniques for separation by residual pseudo-representations}
\label{Section:separation residue}

We motivate this section by pointing out that the power of Corollary~\ref{C:precise NP computation} is largely determined by how close the Newton polygon is to the Hodge polygon.  The application of this result will be increasingly limited when the level subgroup $U$ gets smaller.
An natural idea to loosen the condition \eqref{E:NP<HP+1} is to separate the space of automorphic forms using  the \emph{tame} Hecke algebras.

In fact, we will show that one can obtain a natural direct sum decomposition of the space of overconvergent automorphic forms according to the residual Galois pseudo-representations attached.  Furthermore, we can reproduce main theorems of the previous section for each direct summand.
We also emphasize that this decomposition should have its own interest.

We keep the notation as in the previous section.  In particular, we assume Hypothesis~\ref{H:psi conductor pm}: $m \geq 4$.

\subsection{Pseudo-representations}
Let $G_{\QQ,\calS}$ denote the Galois group of the maximal extension of $\QQ$ unramified outside $\calS$ (see Subsection~\ref{S:setup for D} for $\calS$).
Let $R$ be a (topological) ring.
A ($2$-dimensional) \emph{pseudo-representation} is a (continuous) map $\rho: G_{\QQ, \calS} \to R$ such that, for $g_1, g_2, g_3 \in G_{\QQ,\calS}$, we have $\rho(1) = 2$, $\rho(g_1g_2) = \rho(g_2g_1)$, and
\[
\rho(g_1) \rho(g_2) \rho(g_3) + \rho(g_1g_2g_3) + \rho(g_1g_3g_2) = \rho(g_1) \rho(g_2g_3)+ \rho(g_2) \rho(g_1g_3) + \rho(g_3) \rho(g_1g_2).
\]
Let $\rho: G_{\QQ, \calS} \to \calO$ be a pseudo-representation.
\begin{itemize}
\item
If $\chi: G_{\QQ, \calS} \to \calO^\times$ is a continuous character,  then $(\rho\otimes \chi)(g): = \rho(g) \chi(g)$ is  a pseudo-representation.
\item
We use $\bar \rho: G_{\QQ,\calS} \to \FF$ to denote the reduction $\bar \rho(g) : = \rho (g) \textrm{ mod }\varpi$; it is called the \emph{residual pseudo-representation} associated to $\rho$.
\item
The (residual) pseudo-representation is uniquely determined by the its evaluation on the geometric Frobenius: $\rho(\Frob_l)$ for $l \notin \calS$.
\end{itemize}

It is known that to each automorphic representation $\pi$ appearing in $S_2^D(U; \psi_m)$,
there exists a pseudo-representation $\rho_\pi: G_{\QQ, \calS} \to \calO$ such that $\rho(\Frob_l) = a_l(\pi)$ for all $l \notin \calS$.
We say that a residual pseudo-representation $\bar \rho: G_{\QQ, \calS} \to \FF$ \emph{appears} in a space of automorphic forms $S_2^D(U; \psi_m)$ if there is an automorphic representation $\pi$ appearing in $S_2^D(U; \psi_m)$ such that the reduction of the associated pseudo-representation is $\bar \rho$.

The goal of this section is to decompose the space $S_\calB^{D, \dagger}(U; \kappa)$ according to the residual pseudo-representations appearing in the space of \emph{weight two} classical automorphic forms.\footnote{It should not be too surprise to see that we only need weight two modular forms, as it was already observed by Serre \cite{serre} that all modular residual pseudo-representations appear in weight two.}
The key is to use the tame Hecke action to break up the space $S_\calB^{D, \dagger}(U; \kappa)$.
We start with the decomposition over the space of classical automorphic forms.

\begin{notation}
We use $\scrB(U; \psi_m)$ to denote all residual pseudo-representations $\bar \rho$ that appear in $S^\cl: = \bigoplus_{r=0}^{q-1}
S_2^D(U; \psi_m \omega^{-2r}; \omega^r)$.
For each pair of distinct residual pseudo-representations $\bar \rho, \bar \rho' \in \scrB(U; \psi)$, we pick a prime $l_{\bar \rho, \bar \rho'} \notin \calS$ such that $\bar \rho(\Frob_{l_{\bar \rho, \bar \rho'}}) \neq \bar \rho'(\Frob_{l_{\bar \rho, \bar \rho'}})$. We fix a lift $\tilde a_{l_{\bar \rho, \bar \rho'}}(\bar \rho) \in \calO$ of $\bar \rho(\Frob_{l_{\bar \rho, \bar \rho'}})$ and a lift $\tilde a_{l_{\bar \rho, \bar \rho'}}(\bar \rho') \in \calO$ of $\bar \rho'(\Frob_{l_{\bar \rho, \bar \rho'}})$.

For $\bar \rho \in \scrB (U; \psi_m)$, consider the following tame Hecke operator
\[
P_{\bar \rho} : = \prod_{\bar \rho' \neq \bar \rho}
\big( T_{l_{\bar \rho, \bar\rho'}} - \tilde a_{l_{\bar \rho, \bar \rho'}}(\bar \rho')\big) \big/ \big( \tilde a_{l_{\bar \rho, \bar \rho'}}(\bar \rho) - \tilde a_{l_{\bar \rho, \bar \rho'}}(\bar \rho')\big).
\]
Note that $P_{\bar \rho}$ defines an endomorphism of the integral model $
S_2^D(U; \psi_m \omega^{-2r}; \omega^r; \calO)$ for each $r$.
The operator $P_{\bar \rho}$ depends on the choice of the lifts $\tilde a_{\bar \rho, \bar \rho'}(\bar \rho)$ and $\tilde a_{\bar \rho, \bar \rho'}(\bar \rho')$'s.
\end{notation}

\begin{lemma}
\label{L:decomposition classical forms}
Fix $r \in\{0, \dots, q-1\}$.
Let $P_{\bar \rho,r} $ denote the action of $P_{\bar \rho}$ on  the space of classical automorphic forms $
S_2^D(U; \psi_m \omega^{-2r}; \omega^r; \calO)$.
Then $P_{\bar \rho, r}^2 \equiv P_{\bar \rho, r} \pmod {\varpi}$.
The limit
\[
\widetilde P_{\bar \rho, r}^\cl: = \lim_{n \to \infty} (P_{\bar \rho, r})^{p^n}
\]
exists and it is the projection to the direct sum $V(\bar\rho)_r$ of subspaces $V(\pi)$ over all automorphic representations $\pi$ appearing in $S_2^D(U; \psi_m \omega^{-2r}; \omega^r)$ for which the associated pseudo-representation reduces to $\bar \rho$.
In particular, we have
\[
\big(\widetilde P_{\bar \rho, r} \big)^2 = \widetilde P_{\bar \rho, r}, \quad
\widetilde P_{\bar \rho, r} \widetilde P_{\bar \rho', r} = 0 \textrm{ if }\bar \rho \neq \bar \rho',\ \textrm{ and }
\sum_{\bar \rho \in \scrB(U; \psi_m)} \widetilde P_{\bar \rho, r} = \mathrm{id}.
\]
Moreover, the definition of $\widetilde P_{\bar \rho, r}$ is independent of the choice of the lifts $\tilde a_{l_{\bar \rho, \bar \rho'}}(\bar \rho)$ and $\tilde a_{l_{\bar \rho, \bar \rho'}}(\bar \rho')$'s; and it defines a direct sum decomposition of the integral model
\[
S_2^D(U; \psi_m \omega^{-2r}; \omega^r; \calO) \cong \bigoplus_{\bar\rho \in \scrB(U; \psi_m)} V(\bar \rho; \calO)_r.
\]
\end{lemma}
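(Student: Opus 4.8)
The plan is to reduce to the spectral decomposition over $E$ and then to lift an idempotent from characteristic $p$. After enlarging $E$ if necessary, I would first combine \eqref{E:spectral decomposition over C} and \eqref{E:untwist} to write
\[
S^D_2(U;\psi_m\omega^{-2r};\omega^r)\otimes_\calO E=\bigoplus_{\pi}V(\pi),
\]
on which $T_l$ ($l\notin\calS$) acts by the scalar $a_l(\pi)=\rho_\pi(\Frob_l)\in\calO$ and $U_p$ by $a_p(\pi)$ (Lemma~\ref{L:only new forms}). Hence $P_{\bar\rho}$ acts on $V(\pi)$ by the scalar
\[
c_\pi=\prod_{\bar\rho'\neq\bar\rho}\frac{a_{l_{\bar\rho,\bar\rho'}}(\pi)-\tilde a_{l_{\bar\rho,\bar\rho'}}(\bar\rho')}{\tilde a_{l_{\bar\rho,\bar\rho'}}(\bar\rho)-\tilde a_{l_{\bar\rho,\bar\rho'}}(\bar\rho')},
\]
whose denominators are units in $\calO$ by the choice of the primes $l_{\bar\rho,\bar\rho'}$; reducing modulo $\varpi$ one checks directly that $c_\pi\equiv1$ if $\bar\rho_\pi=\bar\rho$ and $c_\pi\equiv0$ otherwise (in the second case the factor indexed by $\bar\rho'=\bar\rho_\pi$ already vanishes). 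Thus $P_{\bar\rho,r}$ is diagonalized by the $V(\pi)$ with eigenvalues $\equiv0,1\pmod\varpi$.

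Next I would bring in the tame Hecke algebra $\TT_r\subset\End_\calO\bigl(S^D_2(U;\psi_m\omega^{-2r};\omega^r;\calO)\bigr)$ generated over $\calO$ by the $T_l$, $l\notin\calS$: it is a reduced finite flat $\calO$-algebra with $\TT_r\otimes_\calO E\cong\prod_\pi E$ (strong multiplicity one), hence a finite product of complete local rings, whose factors — by flatness over the discrete valuation ring $\calO$ every maximal ideal is the reduction of some $\rho_\pi$ — are indexed by the residual pseudo-representations appearing in $S^D_2(U;\psi_m\omega^{-2r};\omega^r)$, a subset of $\scrB(U;\psi_m)$ (for $\bar\rho$ not appearing, all the objects below are zero). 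By the computation of the $c_\pi$, the image of $P_{\bar\rho,r}$ in the factor at $\gothm_{\bar\rho}$ is a unit congruent to $1$ and its image in every other factor lies in the nilpotent maximal ideal; since $\TT_r/\varpi$ is a finite $\FF$-algebra of characteristic $p$, a sufficiently large $p$-power $Q:=P_{\bar\rho,r}^{p^{n_0}}$ therefore reduces modulo $\varpi$ to the idempotent $\bar e$ which is the identity on the $\bar\rho$-factor and zero elsewhere. Consequently $Q^2\equiv Q\pmod\varpi$, and the usual computation (if $Q^{p^{k}}\equiv Q^{p^{k-1}}\pmod{\varpi^{k}}$, then raising to the $p$-th power yields $Q^{p^{k+1}}\equiv Q^{p^{k}}\pmod{\varpi^{k+1}}$) shows that $P_{\bar\rho,r}^{p^n}=Q^{p^{n-n_0}}$ is $\varpi$-adically Cauchy in $\End_\calO\bigl(S^D_2(U;\psi_m\omega^{-2r};\omega^r;\calO)\bigr)$, so the limit $\widetilde P^{\cl}_{\bar\rho,r}=\lim_n P_{\bar\rho,r}^{p^n}$ exists and is the unique idempotent of $\TT_r$ lifting $\bar e$. (Replacing $P_{\bar\rho,r}$ by $P_{\bar\rho,r}^{p^{n_0}}$ from the start one may in addition assume the literal congruence $P_{\bar\rho,r}^2\equiv P_{\bar\rho,r}\pmod\varpi$.)

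Finally I would read off the remaining assertions over $E$, where everything is simultaneously diagonal on the $V(\pi)$: since $c_\pi\equiv1$ forces $c_\pi^{p^n}\to1$ and $c_\pi\equiv0$ forces $c_\pi^{p^n}\to0$, the operator $\widetilde P^{\cl}_{\bar\rho,r}$ acts as the identity on $V(\pi)$ when $\bar\rho_\pi=\bar\rho$ and as $0$ otherwise, so it is exactly the projection onto $V(\bar\rho)_r=\bigoplus_{\bar\rho_\pi=\bar\rho}V(\pi)$. Checking one $V(\pi)$ at a time (each appearing $\pi$ contributing its $\bar\rho_\pi\in\scrB(U;\psi_m)$ to a single term) gives $(\widetilde P^{\cl}_{\bar\rho,r})^2=\widetilde P^{\cl}_{\bar\rho,r}$, $\widetilde P^{\cl}_{\bar\rho,r}\widetilde P^{\cl}_{\bar\rho',r}=0$ for $\bar\rho\ne\bar\rho'$, and $\sum_{\bar\rho\in\scrB(U;\psi_m)}\widetilde P^{\cl}_{\bar\rho,r}=\id$; these identities then hold in $\End_\calO$ because $\TT_r$ is $\calO$-torsion free. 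Independence of the lifts is immediate, since the reduction $\bar P_{\bar\rho,r}$, hence $\bar e$, depends only on the values $\bar\rho(\Frob_{l_{\bar\rho,\bar\rho'}})$ and $\bar\rho'(\Frob_{l_{\bar\rho,\bar\rho'}})$, while the limit depends only on $\bar e$ by uniqueness of idempotent lifts. The orthogonal idempotents then split $S^D_2(U;\psi_m\omega^{-2r};\omega^r;\calO)$ as $\bigoplus_{\bar\rho\in\scrB(U;\psi_m)}V(\bar\rho;\calO)_r$ with $V(\bar\rho;\calO)_r:=\widetilde P^{\cl}_{\bar\rho,r}\bigl(S^D_2(U;\psi_m\omega^{-2r};\omega^r;\calO)\bigr)$, each an $\calO$-free direct summand with generic fibre $V(\bar\rho)_r$. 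The one delicate point is the interplay of the three levels: the mod-$\varpi$ Hecke action need not be semisimple, so one must work with the $p$-power limit rather than with $P_{\bar\rho,r}$ itself, and the integrality of the decomposition is guaranteed precisely because $\widetilde P^{\cl}_{\bar\rho,r}$ lies in $\TT_r$ — i.e.\ is realized as a limit of polynomials in the $T_l$ — rather than merely as a spectral projector over $E$.
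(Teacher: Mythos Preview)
Your proof is correct and follows the same strategy as the paper's (one-line) argument, which simply notes that $P_{\bar\rho,r}$ acts on each $V(\pi)$ by a $1$-unit or an element of $(\varpi)$ according as $\bar\rho_\pi=\bar\rho$ or not, then declares the lemma immediate. Your added care --- routing the idempotent lifting through the tame Hecke algebra $\TT_r$ and first passing to a $p$-power before asserting $Q^2\equiv Q\pmod\varpi$ --- is appropriate, since the eigenvalue congruences $c_\pi\equiv 0,1$ alone do not force $P_{\bar\rho,r}^2\equiv P_{\bar\rho,r}$ in $\End_\calO(M)$ when $\TT_r/\varpi$ fails to be reduced.
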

\begin{proof}
Note that, $P_{\bar \rho, r}$ acts on each $V(\pi)$ by some element in $(\varpi)$ if $\bar \rho_\pi \neq \bar \rho $, and by some $1$-unit if $\bar \rho_\pi = \bar \rho$.  The Lemma follows from this immediately.
\end{proof}

The upshot is that one can extend the decomposition above to the case of overconvergent automorphic forms.

\subsection{Some infinite matrices}
For each $r$, we identify $S^D_2(U; \psi_m \omega^{-2r}; \omega^r)$ with  $\oplus_{i=0}^{t-1} E$ by evaluating the automorphic forms at $\gamma_0,\gamma_1, \dots, \gamma_{t-1}$.
This way, the operators $P_{\bar \rho,r}$ and $\widetilde P_{\bar \rho,r}$ are represented by two  $t\times t$-matrices $\gothP_{\bar \rho,r}^\cl, \widetilde \gothP_{\bar \rho,r}^\cl \in \rmM_{t}(\calO)$.

We use $\gothP_{\bar \rho}^{\cl, \infty}$ (resp. $\widetilde \gothP_{\bar \rho}^{\cl, \infty}$) to denote the infinite block diagonal matrix
whose diagonal block-entries are $\gothP_{\bar \rho,0}^\cl, \gothP_{\bar \rho,1}^\cl, \dots$ (resp. $\widetilde \gothP_{\bar \rho,0}^\cl, \widetilde \gothP_{\bar \rho,1}^\cl, \dots$).

Note that $P_{\bar \rho}$ only involves Hecke operators, so it also acts on the space of overconvergent automorphic forms $S^{D,\dagger}_{ \calB}(U;\kappa)$, where $\kappa$ is the continuous character of $\ZZ_p^\times$ with values in $(A^\circ)^\times$ as defined in Notation~\ref{N:character kappa}.
Let $\gothP_{\bar \rho}^\calB(\kappa)$ denote the matrix for $P_{\bar \rho}$ under the basis  given by $1_0, \dots, 1_{t-1}, pz_0, \dots, pz_{t-1}, p^2 z^2_0, \dots$ as in Notation~\ref{N:matrix for Tl and Up}.

By Proposition~\ref{P:Tl Up overconvergent equiv classical}(1), we have that
\begin{equation}
\label{E:congruence Q with P}
\gothP_{\bar \rho}^\calB(\kappa)
 \equiv  \gothP_{\bar \rho}^{\cl, \infty} \textrm{ modulo the error space }\mathbf{Err} \textrm{ in }\eqref{E:error matrix}.
\end{equation}
The next Proposition says that we can improve the infinite matrix $\gothP_{\bar \rho}^\calB(\kappa)$ into a projection, as we did above so that we can factor out the subspace of overconvergent automorphic forms
corresponding to the Galois pseudo-representation $\bar \rho$.


\begin{proposition}
\label{P:decomposition}
Keep the notation as above.
\begin{enumerate}
\item We have $(\widetilde \gothP_{\bar \rho}^{\cl, \infty})^2 = \widetilde\gothP_{\bar \rho}^{\cl, \infty}$, $\widetilde\gothP_{\bar \rho}^{\cl, \infty}  \widetilde\gothP_{{\bar \rho}'}^{\cl, \infty} = 0$ if ${\bar \rho} \neq {\bar \rho}'$, and $\sum_{{\bar \rho} \in \scrB(U; \psi_m)} \widetilde\gothP_{\bar \rho}^{\cl, \infty} = I_\infty$, where $I_\infty: = \Diag(1)$ denotes the infinite identity matrix.
\item
The limit
\[
\widetilde \gothP_{\bar \rho}^\calB(\kappa): = \lim_{n \to \infty} (\gothP_{\bar \rho}^\calB(\kappa))^{p^n}
\]
exists.  Moreover, we have
\[
\big(\widetilde \gothP_{\bar \rho}^\calB(\kappa)\big)^2 = \widetilde \gothP_{\bar \rho}^\calB(\kappa), \
\widetilde \gothP_{\bar \rho}^\calB(\kappa) \widetilde \gothP_{{\bar \rho}'}^\calB(\kappa) = 0 \textrm{ for }{\bar \rho} \neq {\bar \rho}', \textrm{ and }
\sum_{{\bar \rho} \in \scrB(U; \psi_m)} \widetilde \gothP_{\bar \rho}^\calB(\kappa) = I_\infty.
\]

\item
We have a decomposition of Banach $A$-modules respecting the $U_p$-action:
\[
S^{D,\dagger}_{\calB}(U; \kappa) = \bigoplus_{{\bar \rho} \in \scrB(U; \psi_m)}\widetilde \gothP_{\bar \rho}^\calB(\kappa) S^{D,\dagger}_{\calB}(U; \kappa).
\]
Consequently, we have a product formula for the characteristic power series
\[
\Char(U_p; S^{D,\dagger}_{\calB}(U; \kappa)) = \prod_{{\bar \rho} \in \scrB(U; \psi_m)} \Char \big(U_p; \widetilde \gothP_{\bar \rho}^\calB(\kappa) S^{D,\dagger}_{\calB}(U; \kappa)  \big).
\]
\item
We have the following congruence relation: for every $\bar \rho\in \scrB(U; \psi_m)$, the difference of the infinite matrices $ \widetilde \gothP_{\bar \rho}^\calB(\kappa) -  \widetilde \gothP_{\bar \rho}^{\cl, \infty}$ belongs to the space $\mathbf{Err}$ in \eqref{E:error matrix}.
\end{enumerate}
\end{proposition}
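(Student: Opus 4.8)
The plan is to deduce parts (1)--(4) from two ingredients: the block-by-block application of Lemma~\ref{L:decomposition classical forms}, and a standard idempotent-lifting argument in a complete commutative Banach algebra, carried out so as to respect the error filtration $\mathbf{Err}$ of \eqref{E:error matrix}. Throughout, let $R$ be the $\calO$-algebra of operators of norm $\le 1$ on $S^{D,\dagger}_\calB(U;\kappa)$ in the orthonormal basis of Notation~\ref{N:matrix for Tl and Up}, realized as infinite block matrices of the relevant shape with entries in $A^\circ$ and columns tending to $0$; let $D\subseteq R$ be the block-diagonal subalgebra; note $R$, hence every closed subalgebra, is $\varpi$-adically separated, and all of $\gothU_p^\calB(\kappa)$, $\gothT_l^\calB(\kappa)$, $\gothP^\calB_{\bar\rho}(\kappa)$, $\gothP^{\cl,\infty}_{\bar\rho}$ lie in $R$ (the normalizing denominators in $P_{\bar\rho}$ are units in $\calO$). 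Part~(1) is immediate: $\widetilde\gothP^{\cl,\infty}_{\bar\rho}$ is block-diagonal with $r$-th block $\widetilde P^\cl_{\bar\rho,r}$, so the three relations hold in each block by Lemma~\ref{L:decomposition classical forms}. Before the rest I would record the algebra of $\mathbf{Err}$: it is a \emph{closed} $\calO$-submodule of $R$, contained in $\varpi R$, stable under left and right multiplication by $D$, and satisfying $\mathbf{Err}\cdot\mathbf{Err}\subseteq\mathbf{Err}$ --- all by inspecting exponents in \eqref{E:error matrix}: with $e(i,j)$ the exponent of $p$ on the $(i,j)$-block, $e(i,j)+e(j,k)\ge e(i,k)$ for all $i,j,k$. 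Reducing \eqref{E:congruence Q with P} modulo $\varpi$ gives $\overline{\gothP^\calB_{\bar\rho}(\kappa)}=\overline{\gothP^{\cl,\infty}_{\bar\rho}}$ for every $\bar\rho$, and applying Lemma~\ref{L:decomposition classical forms} block-by-block shows the matrices $\overline{\gothP^{\cl,\infty}_{\bar\rho}}$ ($\bar\rho\in\scrB(U;\psi_m)$) are pairwise orthogonal idempotents summing to $I_\infty$ in $R/\varpi R$; in particular $(\gothP^\calB_{\bar\rho}(\kappa))^2-\gothP^\calB_{\bar\rho}(\kappa)\in\varpi R$.

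For (2): all the operators $\gothP^\calB_{\bar\rho}(\kappa)$ are polynomials in the pairwise commuting tame Hecke operators $\gothT^\calB_l(\kappa)$ ($l\notin\calS$), hence lie in a commutative closed subalgebra $\calC\subseteq R$. In $\calC$ the element $x=\gothP^\calB_{\bar\rho}(\kappa)$ satisfies $x^2-x\in\varpi\calC$; since $2\bar x-1$ is a unit (its square is $1$), $\bar x$ lifts uniquely to an idempotent $e\in\calC$, and writing $x=e+\eta$ with $e\eta=\eta e$ and $\eta\in\varpi\calC$ one computes $x^{p^n}=e(1+\eta)^{p^n}+(1-e)\eta^{p^n}$, where $\eta^{p^n}\to 0$ and $(1+\eta)^{p^n}\to 1$ because $p\in\varpi\calC$. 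Hence $\widetilde\gothP^\calB_{\bar\rho}(\kappa):=\lim_n(\gothP^\calB_{\bar\rho}(\kappa))^{p^n}$ exists and equals $e$, an idempotent. As all these $e$'s lie in the commutative ring $\calC$ and reduce mod $\varpi$ to orthogonal idempotents summing to $I_\infty$, the idempotent $e_{\bar\rho}e_{\bar\rho'}$ is $\equiv 0$, hence $0$, for $\bar\rho\ne\bar\rho'$, and the idempotent $I_\infty-\sum_{\bar\rho}e_{\bar\rho}$ is $\equiv 0$, hence $0$; this is (2).

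Part (3) follows formally from (2): finitely many pairwise orthogonal idempotent operators summing to the identity split $S^{D,\dagger}_\calB(U;\kappa)$ as the direct sum of their closed images; each $\widetilde\gothP^\calB_{\bar\rho}(\kappa)$, being a limit of polynomials in $\gothT^\calB_l(\kappa)$, commutes with $U_p$, so $U_p$ preserves each summand and stays nuclear there (restriction of a nuclear operator to an invariant direct summand), and potential orthonormalizability also passes to the summands; hence the product formula $\Char(U_p;-)=\prod_{\bar\rho}\Char(U_p;\widetilde\gothP^\calB_{\bar\rho}(\kappa)(-))$.

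For (4) I would prove $(\gothP^\calB_{\bar\rho}(\kappa))^k-(\gothP^{\cl,\infty}_{\bar\rho})^k\in\mathbf{Err}$ for all $k\ge1$ by induction: the case $k=1$ is \eqref{E:congruence Q with P}, and writing $P=\gothP^\calB_{\bar\rho}(\kappa)$, $Q=\gothP^{\cl,\infty}_{\bar\rho}\in D$ one has $P^{k+1}-Q^{k+1}=P(P^k-Q^k)+(P-Q)Q^k$, where $(P-Q)Q^k\in\mathbf{Err}\cdot D\subseteq\mathbf{Err}$ and $P\cdot\mathbf{Err}\subseteq D\cdot\mathbf{Err}+\mathbf{Err}\cdot\mathbf{Err}\subseteq\mathbf{Err}$ (using $P=Q+(P-Q)$ with $P-Q\in\mathbf{Err}$, and $Q^k\in D$). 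Taking $k=p^n\to\infty$ and using that $\mathbf{Err}$ is closed yields $\widetilde\gothP^\calB_{\bar\rho}(\kappa)-\widetilde\gothP^{\cl,\infty}_{\bar\rho}\in\mathbf{Err}$. The main obstacle is precisely this error-space bookkeeping: one must verify that $\mathbf{Err}$ is closed and is a two-sided module over $D$ with $\mathbf{Err}\cdot\mathbf{Err}\subseteq\mathbf{Err}$, and --- the slightly delicate point --- that the idempotent produced by the purely $p$-adic iteration inside $\calC$ is \emph{automatically} the one congruent to $\widetilde\gothP^{\cl,\infty}_{\bar\rho}$ modulo $\mathbf{Err}$; this is what the induction in (4) secures, and it is what makes the decomposition strong enough to re-run the arguments of Section~\ref{Section:improve Hodge polygon} on each summand.
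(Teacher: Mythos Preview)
Your proof is correct and follows essentially the same approach as the paper's: block-wise reduction to Lemma~\ref{L:decomposition classical forms} for (1), idempotent lifting via $p^n$-th powers in a commutative Hecke algebra for (2), formal consequences for (3), and the $\mathbf{Err}$-bimodule and multiplicativity bookkeeping for (4). Your treatment of (2) is slightly more structured---invoking uniqueness of idempotent lifts in a complete ring and the decomposition $x = e(1+\eta) + (1-e)\eta$, then observing that any idempotent in $\varpi\calC$ vanishes---whereas the paper argues directly that $(\gothP^\calB_{\bar\rho}(\kappa))^{p^{i+1}} \equiv (\gothP^\calB_{\bar\rho}(\kappa))^{p^i} \pmod{\varpi^i}$ and handles orthogonality and summation by raising the relevant congruences to $p^i$-th powers; the content is the same.
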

\begin{proof}
(1) follows from the corresponding properties of $\widetilde\gothP_{\bar \rho,r}^\cl$ in Lemma~\ref{L:decomposition classical forms}.

For (2), we observe that $\gothP_{\bar \rho}^\calB(\kappa) \equiv \gothP_{\bar \rho}^{\cl, \infty} \pmod \varpi$ by Proposition~\ref{P:Tl Up overconvergent equiv classical}(1).  So by Lemma~\ref{L:decomposition classical forms},
\begin{equation}
\label{E:P idempotent mod pi}
\big(\gothP_{\bar \rho}^\calB(\kappa)\big)^2
\equiv
\gothP_{\bar \rho}^\calB(\kappa) \pmod \varpi.
\end{equation}

Easy induction proves that $(\gothP_{\bar \rho}^\calB(\kappa))^{p^{i+1}} \equiv (\gothP_{\bar \rho}^\calB(\kappa))^{p^i} \pmod{\varpi^i}$. So the limit $\widetilde \gothP_{\bar \rho}^\calB(\kappa): = \lim_{i \to \infty} (\gothP_{\bar \rho}^\calB(\kappa))^{p^i}$ exists.
The property $\big(\widetilde \gothP_{\bar \rho}^\calB(\kappa)\big)^2 = \widetilde \gothP_{\bar \rho}^\calB(\kappa)$ also follows from \eqref{E:P idempotent mod pi}.

Now for two pseudo-representations $\bar \rho \neq \bar \rho'$ in $\scrB(U; \psi_m)$,
we have
\[
\gothP_{\bar \rho}^\calB(\kappa) \gothP_{{\bar \rho}'}^\calB(\kappa) \equiv \gothP_{\bar \rho}^{\cl, \infty} \gothP_{{\bar \rho}'}^{\cl, \infty} \equiv 0 \pmod \varpi.
\]
It then follows that  $\widetilde \gothP_{\bar \rho}^\calB(\kappa) \widetilde \gothP_{{\bar \rho}'}^\calB(\kappa) = 0$ (note that it is important to know that $\gothP_{\bar \rho}^\calB(\kappa)$ commutes with $ \gothP_{{\bar \rho}'}^\calB(\kappa)$ because both operators can be expressed in terms of Hecke operators.)

Similarly, if we start with
\[
\sum_{{\bar \rho} \in \scrB(U; \psi_m)} \gothP_{\bar \rho}^\calB (\kappa)\equiv \sum_{{\bar \rho} \in \scrB(U; \psi_m)} \gothP_{\bar \rho}^{\cl, \infty} \equiv  I_\infty \pmod \varpi,
\]
raising it to $p^i$th power implies that
\[
I_\infty \equiv
\sum_{{\bar \rho} \in \scrB(U; \psi_m)} \big(\gothP_{\bar \rho}^\calB(\kappa)\big)^{p^i} \pmod {\varpi^i}.
\]
Here we used the fact that $\gothP_{\bar \rho}^\calB(\kappa) \gothP_{{\bar \rho}'}^\calB(\kappa) \equiv 0 \pmod \varpi$ for ${\bar \rho} \neq {\bar \rho}'$ and once again the crucial commutativity of $\gothP_{\bar \rho}^\calB(\kappa)$'s.
Taking the limit as $i \to \infty$ shows that $\sum_{{\bar \rho} \in \scrB(U; \psi_m)} \widetilde \gothP_{\bar \rho}^\calB(\kappa)  =I_\infty$.

(3) follows from (2) and the fact that $U_p$ commutes with each $\widetilde \gothP_{\bar \rho}^\calB(\kappa)$, as this operator is a limit of polynomials in tame Hecke operators.

We now check (4).
First recall some basic properties of the error space $\Err$ defined in \eqref{E:error matrix}.
For $M_1, M_2 \in \Err$, it is easy to see that $M_1M_2 \in \Err$ and $ \gothP_{\bar \rho}^{\cl, \infty} M_1, M_1 \gothP_{\bar \rho}^{\cl, \infty} \in \Err$.
Thus
\begin{equation}
\label{E:gothQ minus overline gothQ}
(\gothP_{\bar \rho}^\calB(\kappa))^{p^n} - (\gothP_{\bar \rho}^{\cl, \infty})^{p^n} =
\big(  \gothP_{\bar \rho}^{\cl, \infty} + (\gothP_{\bar \rho}^\calB(\kappa) - \gothP_{\bar \rho}^{\cl, \infty})\big)^{p^n} - ( \gothP_{\bar \rho}^{\cl, \infty})^{p^n} \in \Err
\end{equation}
because $\gothP_{\bar \rho}^\calB(\kappa) - \gothP_{\bar \rho}^{\cl, \infty} \in \Err$ by \eqref{E:congruence Q with P}.  Taking the limit as $n \to \infty$ proves (4).
\end{proof}

\begin{caution}
It is important to point out that, in \eqref{E:gothQ minus overline gothQ}, since $\gothP_{\bar \rho}^\calB(\kappa)$ and $ \gothP_{\bar \rho}^{\cl, \infty}$ do not commute with each other, we \emph{cannot} use binomial expansion formula to improve the congruence \eqref{E:gothQ minus overline gothQ}. Hence the limit $\widetilde \gothP_{\bar \rho}^\calB(\kappa)$ is \emph{not} a block diagonal matrix.
So Proposition~\ref{P:decomposition}(4) is the best congruence we could hope for.
\end{caution}

\begin{remark}
\label{R:pseudo-decomposition formal}
We should point out that decomposing a Banach Hecke module according to pseudo-Galois representations $\bar \rho$ is a quite formal process and can be done in a much greater generality.
However, it is often difficult to control the factor corresponding to each $\bar \rho$.
The advantage of our situation is that we can give a good ``model" of the factor corresponding to each $\bar \rho$.
\end{remark}

\subsection{${\bar \rho}$-part of classical automorphic forms}

Recall from Lemma~\ref{L:decomposition classical forms} that the space of classical automorphic forms $S_2^D(U; \psi_m\omega^{-2r}; \omega^r; \calO)$ for each $r$ is written as the direct sum $\bigoplus_{\bar \rho \in \scrB(U; \psi_m)} V(\bar \rho, \calO)_r$.
We put $V(\bar \rho)_r = V(\bar \rho; \calO)[\frac 1p]$ and $d_{\bar \rho, r}: = \dim V(\bar \rho)_r$.

Note that the operator $U_p$
acts on each $V(\bar \rho, \calO)_r$.
By Corollary~\ref{C:Hodge polygon less eq 1} (and Subsection~\ref{S:Hodge v.s. Newton}(6)), the  Hodge slopes $\alpha_0(\bar\rho)_r\leq \cdots \leq \alpha_{ d_{\bar \rho, r}-1}(\bar \rho)_r$ of the $U_p$-action on each $V(\bar \rho, \calO)_r$ belong to $[0,1]$. The same holds for the Newton slopes.
We pick a basis $e_0(\bar \rho)_r, \dots, e_{d_{\bar \rho, r}-1}(\bar \rho)_r$ of $V(\bar \rho, \calO)_r$ such that, the corresponding matrix $\gothU_p^{\cl, \bar \rho, r}$ of the $U_p$-action has $i$th row divisible by $p^{\alpha_i(\bar \rho)_r}$.


Providing $S_2^D(U; \psi_m \omega^{-2r}; \omega^r)$ with the natural basis of evaluation at $\gamma_0, \dots, \gamma_{t-1}$, and each $V(\bar \rho)_r$ with the basis above,
we write $\gothC_{\bar \rho ,r}$ and $\gothD_{\bar \rho, r}$ for the matrices for the natural inclusion and the natural projection $\widetilde \gothP_{\bar \rho, r}^{\cl}$:
\[
\xymatrix@C=30pt{
V(\bar \rho)_r \ar[r]^-{ \gothC_{\bar \rho,r}} & S_2^D(U; \psi_m \omega^{-2r}; \omega^r)
\ar[r]^-{\gothD_{\bar \rho, r}} & V(\bar \rho)_r.
}
\]
So $\gothC_{\bar \rho, r}$ is a $t \times d_{\bar \rho, r}$-matrix and $\gothD_{\bar \rho, r}$ is a $d_{\bar \rho, r} \times t$-matrix
such that
$ \gothC_{\bar \rho, r} \gothD_{\bar \rho, r} = \widetilde \gothP_{\bar \rho, r}^\cl$ and $\gothD_{\bar \rho, r} \gothC_{\bar \rho, r}  = I_{d_{\bar \rho, r}}$.

We point out that the number $d_{\bar \rho, r}$ and the matrices $\gothU_p^{\cl, \bar \rho, r}$, $\gothC_{\bar \rho, r}$, $\gothD_{\bar \rho, r}$, and $\widetilde\gothP^\cl_{\bar \rho, r}$ only depends on $r$ modulo $q$ as opposed to $r$.
\subsection{A model for the $\bar\rho$-part of overconvergent automorphic forms}
Proposition~\ref{P:decomposition} allows us to reduce the study of the $U_p$-action on $ S^{D,\dagger}_{\calB}(U; \kappa)$ to the $U_p$-action on each subspace $\widetilde \gothP_{\bar \rho}^\calB(\kappa) S^{D,\dagger}_{\calB}(U; \kappa)$, which we call the \emph{${\bar \rho}$-part of $S^{D,\dagger}_{\calB}(U; \kappa)$}.
This space is too abstract to study as pointed out in Remark~\ref{R:pseudo-decomposition formal}. We need to give it a ``model": $V({\bar \rho})^\infty_A$.

We set
\[
S^{\cl, \infty, \circ}_A: = \widehat \bigoplus_{r \geq 0} S^D_2(U; \psi_m \omega^{-2r}; \omega^r; \calO) \otimes_{\calO} A^\circ, \textrm{ and } S^{\cl, \infty}_A : = S^{\cl, \infty, \circ}_A \otimes_{\calO} E.
\]
We define the $U_p$-action on this space to be $\bigoplus_{r \geq 0} p^r \cdot U_p$.
Let $\widetilde \gothU_p^{\cl, \infty}$ denote the matrix for this action with respect the standard basis given by evaluation at $\gamma_0, \dots, \gamma_{t-1}$ of each of the summand. This matrix is the infinite block diagonal matrix whose diagonal components are $p^r \cdot \gothU_p^{\cl}(\psi_m\omega^{-2r})$.

We put
\[
V(\bar\rho)^{\infty, \circ}_A: = \widehat \bigoplus_{r \geq 0} V(\bar\rho; \calO)_r \otimes_{\calO} A^\circ, \textrm{ and }V(\bar\rho)^{\infty}_A  = V(\bar\rho)^{\infty, \circ}_A \otimes_{\calO} E.
\]
We define the $U_p$-action on this space to be $\bigoplus_{r \geq 0} p^r \cdot U_p$. The corresponding matrix with respect to the chosen basis on each $V(\bar \rho; \calO)_r$ is an infinite block diagonal matrix $\gothU_p^{\bar \rho, \infty}$ whose diagonal components are $p^r \cdot \gothU_p^{\cl, \bar \rho, r}$.

We write
\[
\gothC_{\bar \rho}^\infty: = \widehat \oplus_{r \geq 0} \gothC_{\bar \rho, r}: V(\bar \rho)_A^\infty \to S_A^{\cl, \infty} \quad \textrm{ and }\quad \gothD_{\bar \rho}^\infty: = \widehat \oplus_{r \geq 0} \gothD_{\bar \rho, r}: S_A^{\cl, \infty} \to V(\bar \rho)_A^\infty
\]
for the natural inclusion and projection, respectively.  So we have $\gothD_{\bar \rho}^\infty\gothC_{\bar \rho}^\infty = I_\infty$, and $\widetilde \gothP_{\bar \rho}^{\cl, \infty} = \gothC_{\bar \rho}^\infty \gothD_{\bar \rho}^\infty$  is the infinite block diagonal matrix composed of $\widetilde \gothP_{\bar \rho,r}^\cl$.

At the infinite level, we consider the following identification
\begin{equation}
\label{E:total space}
 S^{D,\dagger}_{\calB}(U; \kappa)=
\bigoplus_{i=0}^{t-1}  E\langle w, pz\rangle  =
\bigoplus_{i=0}^{t-1} \widehat \bigoplus_{n\geq 0}   E\langle w\rangle (pz)^{n}  \cong S_A^{\cl, \infty},
\end{equation}
where the first and the last equality are given by evaluation at the elements $\gamma_0,\gamma_1, \dots, \gamma_{t-1}$. This isomorphism does not respect the actions of the Hecke operators literally but we will show later that it approximately does.

\begin{proposition}
\label{P:identification of V(pi) and PS}
The following two natural morphisms are isomorphisms
\[
\xymatrix@C=40pt{
\varphi_{\bar \rho}:\ \widetilde \gothP_{\bar \rho}^\calB(\kappa) S^{D,\dagger}_{\calB}(U; \kappa) \subseteq
S^{D,\dagger}_{\calB}(U; \kappa)\ar[r]^-{\eqref{E:total space}}_-\cong & S_A^{\cl, \infty}
 \ar[r]^-{\gothD_{\bar \rho}^{ \infty}} &V({\bar \rho})^\infty_A;
}
\]
\[
\xymatrix@C=20pt{
\psi_{\bar \rho}:\ V({\bar \rho})^\infty_A \ar[r]^-{\gothC_{\bar \rho}^\infty} &  S_A^{\cl, \infty} \ar[rr]^-{\eqref{E:total space}^{-1}}_-\cong & &
S^{D,\dagger}_{\calB}(U; \kappa)
 \ar[r]^-{\widetilde \gothP_{\bar \rho}^{\calB}(\kappa)} &\widetilde \gothP_{\bar \rho}^\calB(\kappa) S^{D,\dagger}_{\calB}(U; \kappa) .
 }
\]
Moreover, $\psi_{\bar \rho}^{-1} =(1+ \boldsymbol \epsilon) \circ \varphi_{\bar \rho}$ for some endomorphism $\boldsymbol \epsilon: V({\bar \rho})^\infty_A \to V({\bar \rho})^\infty_A$ which, under the basis $\{ e_j (\bar \rho)_r\, |\, j = 0, \dots, d_{\bar \rho, r}-1 \textrm{ and }r \geq 0\}$, is an infinite matrix in
\[
\Err_{\bar \rho}: = \begin{pmatrix}
p\rmM_{d_{\bar \rho, 0}}(A^\circ) & p\rmM_{d_{\bar \rho, 0}\times d_{\bar \rho, 1}}(A^\circ) & p^2 \rmM_{d_{\bar \rho, 0}\times d_{\bar \rho, 2}}(A^\circ) & p^3\rmM_{d_{\bar \rho, 0}\times d_{\bar \rho, 3}}(A^\circ) &\cdots
\\
p^3\rmM_{d_{\bar \rho, 1}\times d_{\bar \rho, 0}}(A^\circ) &  p\rmM_{d_{\bar \rho, 1}}(A^\circ) & p\rmM_{d_{\bar \rho, 1}\times d_{\bar \rho, 2}}(A^\circ) &p^2 \rmM_{d_{\bar \rho, 1}\times d_{\bar \rho, 3}}(A^\circ) &  \cdots
\\
p^4  \rmM_{d_{\bar \rho, 2}\times d_{\bar \rho, 0}}(A^\circ) &p^3\rmM_{d_{\bar \rho, 2}\times d_{\bar \rho, 1}}(A^\circ) & p \rmM_{d_{\bar \rho, 2}\times d_{\bar \rho, 2}}(A^\circ)& p\rmM_{d_{\bar \rho, 2}\times d_{\bar \rho, 3}}(A^\circ) & \cdots\\
p^5\rmM_{d_{\bar \rho, 3}\times  d_{\bar \rho, 0}}(A^\circ) &p^4\rmM_{d_{\bar \rho, 3}\times  d_{\bar \rho, 1}}(A^\circ) &p^3\rmM_{d_{\bar \rho, 3}\times d_{\bar \rho, 2}}(A^\circ) &  p\rmM_{d_{\bar \rho, 3}}(A^\circ)&  \cdots
\\
\vdots & \vdots & \vdots & \vdots & \ddots
\end{pmatrix},
\]
where the $(i,j)$-block entry is
\begin{itemize}
\item
$ p\rmM_{d_{\bar \rho,i}}(A^\circ)$ if $i=j$,
\item
$p^{i-j+2}\rmM_{d_{\bar \rho, i}\times d_{\bar \rho, j}}(A^\circ)$ if $i > j$, and
\item
$p^{j-i}\rmM_{d_{\bar \rho, i}\times d_{\bar \rho, j}}(A^\circ)$ if $i < j$.
\end{itemize}
\end{proposition}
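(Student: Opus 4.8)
The plan is to transport everything to the ``model'' Banach module $S_A^{\cl,\infty}$ via the identification \eqref{E:total space} and to argue with explicit infinite matrices. Write $Q$ for the matrix of $\widetilde\gothP_{\bar\rho}^\calB(\kappa)$ acting on $S_A^{\cl,\infty}$ under that identification; by Proposition~\ref{P:decomposition}(4) we have $Q=\widetilde\gothP_{\bar\rho}^{\cl,\infty}+E$ with $E\in\Err$, and recall that $\widetilde\gothP_{\bar\rho}^{\cl,\infty}=\gothC_{\bar\rho}^\infty\gothD_{\bar\rho}^\infty$ while $\gothD_{\bar\rho}^\infty\gothC_{\bar\rho}^\infty=I_\infty$. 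First I would compute the composite $\varphi_{\bar\rho}\circ\psi_{\bar\rho}\colon V(\bar\rho)_A^\infty\to V(\bar\rho)_A^\infty$, which unwinds to $\gothD_{\bar\rho}^\infty\,Q\,\gothC_{\bar\rho}^\infty=\gothD_{\bar\rho}^\infty(\gothC_{\bar\rho}^\infty\gothD_{\bar\rho}^\infty+E)\gothC_{\bar\rho}^\infty=I_\infty+\boldsymbol\epsilon_0$, where $\boldsymbol\epsilon_0:=\gothD_{\bar\rho}^\infty\,E\,\gothC_{\bar\rho}^\infty$. Since $\gothC_{\bar\rho}^\infty$ and $\gothD_{\bar\rho}^\infty$ are block-diagonal with integral blocks $\gothC_{\bar\rho,r}$, $\gothD_{\bar\rho,r}$ of sizes $t\times d_{\bar\rho,r}$ and $d_{\bar\rho,r}\times t$, the $(i,j)$-block of $\boldsymbol\epsilon_0$ equals $\gothD_{\bar\rho,i}\cdot E_{ij}\cdot\gothC_{\bar\rho,j}$, where $E_{ij}$ is the $(i,j)$-block of $E$; multiplying by integral matrices on both sides preserves divisibility by powers of $p$, so $\boldsymbol\epsilon_0$ lies in $\Err_{\bar\rho}$.

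Next I would note that, being in $\Err_{\bar\rho}$, the operator $\boldsymbol\epsilon_0$ has all entries divisible by $p$ and has each column tending to $0$, hence defines a bounded operator of norm $\le|p|<1$; so $I_\infty+\boldsymbol\epsilon_0$ is invertible with inverse the convergent Neumann series $\sum_{k\ge0}(-\boldsymbol\epsilon_0)^k$. In particular $\varphi_{\bar\rho}$ is surjective and $\psi_{\bar\rho}$ is injective. To get that both are isomorphisms I would likewise compute the other composite $\psi_{\bar\rho}\circ\varphi_{\bar\rho}$ as an endomorphism of $\widetilde\gothP_{\bar\rho}^\calB(\kappa)S^{D,\dagger}_\calB(U;\kappa)$: for $v$ in this space (viewed inside $S_A^{\cl,\infty}$, so $v=Qv$) one gets $Q\,\gothC_{\bar\rho}^\infty\gothD_{\bar\rho}^\infty v=Q\,\widetilde\gothP_{\bar\rho}^{\cl,\infty}v=Q(Q-E)v=Qv-QEv=v-QEv$, using $Q^2=Q$ from Proposition~\ref{P:decomposition}(2). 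Since $\|Q\|\le1$ (because $Q=\widetilde\gothP_{\bar\rho}^{\cl,\infty}+E$ with the first summand integral and block-diagonal and $\|E\|\le|p|$) and $\|E\|\le|p|$, the operator $QE$ has norm $<1$, so $I-QE$ is invertible; hence $\psi_{\bar\rho}\circ\varphi_{\bar\rho}$ is invertible too, and both $\varphi_{\bar\rho}$ and $\psi_{\bar\rho}$ are isomorphisms.

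Finally, from $\varphi_{\bar\rho}\circ\psi_{\bar\rho}=I_\infty+\boldsymbol\epsilon_0$ I read off $\psi_{\bar\rho}^{-1}=(I_\infty+\boldsymbol\epsilon_0)^{-1}\circ\varphi_{\bar\rho}$, so it remains to set $\boldsymbol\epsilon:=(I_\infty+\boldsymbol\epsilon_0)^{-1}-I_\infty=\sum_{k\ge1}(-\boldsymbol\epsilon_0)^k$ and verify $\boldsymbol\epsilon\in\Err_{\bar\rho}$. This reduces to checking that $\Err_{\bar\rho}$ is closed under matrix multiplication: writing the block-divisibility exponents as $a_{ij}=1$ for $i=j$, $a_{ij}=i-j+2$ for $i>j$, and $a_{ij}=j-i$ for $i<j$, closure amounts to the inequality $a_{ik}+a_{kj}\ge a_{ij}$ for all $i,j,k$, a short finite case-check. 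Then each partial sum of the Neumann series lies in $\Err_{\bar\rho}$, and since $\Err_{\bar\rho}$ is closed in the operator norm and the series converges ($\|\boldsymbol\epsilon_0\|\le|p|$), the limit $\boldsymbol\epsilon$ lies in $\Err_{\bar\rho}$. The main obstacle is purely bookkeeping: tracking the block structure through the (non-commuting) composition $Q\cdot\widetilde\gothP_{\bar\rho}^{\cl,\infty}$ and carrying out the triangle-inequality verification for $\Err_{\bar\rho}$; since no cancellation is required — only membership in $\Err_{\bar\rho}$ — the failure of $Q$ and $\widetilde\gothP_{\bar\rho}^{\cl,\infty}$ to commute causes no real difficulty here.
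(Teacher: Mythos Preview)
Your proof is correct and follows essentially the same approach as the paper: compute $\varphi_{\bar\rho}\circ\psi_{\bar\rho}=I_\infty+\boldsymbol\epsilon_0$ with $\boldsymbol\epsilon_0=\gothD_{\bar\rho}^\infty(\widetilde\gothP_{\bar\rho}^\calB(\kappa)-\widetilde\gothP_{\bar\rho}^{\cl,\infty})\gothC_{\bar\rho}^\infty\in\Err_{\bar\rho}$, deduce invertibility of this composite, then argue the other composite is invertible too. The only cosmetic difference is that the paper establishes surjectivity of $\psi_{\bar\rho}$ by factoring $\psi_{\bar\rho}\circ\gothD_{\bar\rho}^\infty=\widetilde\gothP_{\bar\rho}^\calB(\kappa)\cdot(I_\infty+(\widetilde\gothP_{\bar\rho}^{\cl,\infty}-\widetilde\gothP_{\bar\rho}^\calB(\kappa)))$, whereas you compute $\psi_{\bar\rho}\circ\varphi_{\bar\rho}=I-QE$ on the image of $Q$; both routes amount to the same estimate. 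Your treatment of the last step is in fact slightly more careful than the paper's: you correctly identify $\boldsymbol\epsilon=(I_\infty+\boldsymbol\epsilon_0)^{-1}-I_\infty$ and spell out the closure of $\Err_{\bar\rho}$ under multiplication, which the paper uses implicitly.
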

\begin{proof}
We first take the composition
\begin{align}
\label{E:varphi psi almost id}
\varphi_{\bar \rho} \circ \psi_{\bar \rho} - I_\infty &=  \gothD_{\bar \rho}^\infty \widetilde \gothP_{\bar \rho}^\calB(\kappa) \gothC_{\bar \rho}^\infty - I_\infty
\\
\nonumber&
= \gothD_{\bar \rho}^\infty \widetilde \gothP_\pi^{\cl, \infty} \gothC_{\bar \rho}^\infty - I_\infty + \gothD_{\bar \rho}^\infty \big(\widetilde \gothP_{\bar \rho}^\calB(\kappa) - \widetilde \gothP_{\bar \rho}^{\cl, \infty} \big) \gothC_{\bar \rho}^\infty.
\end{align}
Note that $\gothD_{\bar \rho}^\infty \widetilde \gothP_{\bar \rho}^{\cl, \infty} \gothC_{\bar \rho}^\infty - I_\infty = \gothD_{\bar \rho}^\infty \gothC_{\bar \rho}^\infty  \gothD_{\bar \rho}^\infty \gothC_{\bar \rho}^\infty - I_\infty = 0$ and
\[
\gothD_{\bar \rho}^\infty \big(\widetilde \gothP_{\bar \rho}^\calB(\kappa) - \widetilde \gothP_{\bar \rho}^{\cl, \infty} \big) \gothC_{\bar \rho}^\infty  \in \gothD_{\bar \rho}^\infty \cdot \Err \cdot \gothC_{\bar \rho}^\infty \subseteq \Err_{\bar \rho},
\]
where the last inclusion uses the fact that $\gothC_{\bar \rho}^\infty$ and $\gothD_{\bar \rho}^\infty$ are block diagonal matrices (but not with square blocks though).
Since all matrices in $I_\infty+ \Err_{\bar \rho}$ are invertible, $\varphi_{\bar \rho} \circ \psi_{\bar \rho}$ is an isomorphism.
Thus it suffices to prove that $\psi_{\bar \rho}$ is surjective.

For this, we need only to show the surjectivity of $\psi_{\bar \rho} \circ \gothD_{\bar \rho}^\infty$.  Note that
\[
\psi_{\bar \rho} \circ \gothD_{\bar \rho}^\infty = \widetilde \gothP_{\bar \rho}^\calB(\kappa)\gothC_{\bar \rho}^\infty \gothD_{\bar \rho}^\infty = \widetilde \gothP_{\bar \rho}^\calB(\kappa) \widetilde \gothP_{\bar \rho}^{\cl, \infty} = \widetilde \gothP_{\bar \rho}^\calB(\kappa) \big( I_\infty + (\widetilde \gothP_{\bar \rho}^{\cl, \infty} - \widetilde \gothP_{\bar \rho}^\calB(\kappa)) \big).
\]
By Proposition~\ref{P:decomposition}(4), the operator $I_\infty + (\widetilde \gothP_{\bar \rho}^{\cl, \infty} - \widetilde \gothP_{\bar \rho}^\calB(\kappa)) \in I_\infty+\Err$ is an isomorphism.  Then the surjectivity of $\psi_{\bar \rho} \circ \gothD_{\bar \rho}^\infty $ follows from the surjectivity of $\widetilde \gothP_{\bar \rho}^\calB(\kappa)$ onto $\widetilde \gothP_{\bar \rho}^\calB(\kappa) S^{D,\dagger}_{\calB}(U; \kappa)$.
This then concludes the proof of both $\varphi_{\bar \rho}$ and $\psi_{\bar \rho}$ being isomorphisms.

Finally, we observe that
\eqref{E:varphi psi almost id} implies that
\[
\psi_{\bar \rho}^{-1} = \Big( I_\infty + \gothD_{\bar \rho}^\infty \big(\widetilde \gothP_{\bar \rho}^\calB(\kappa) - \widetilde \gothP_{\bar \rho}^{\cl, \infty} \big) \gothC_{\bar \rho}^\infty\Big)^{-1} \circ \varphi_{\bar \rho} = (I_\infty+ \boldsymbol \epsilon) \circ \varphi_{\bar \rho}
\]
for the infinite matrix  $\boldsymbol \epsilon = \gothD_{\bar \rho}^\infty \big(\widetilde \gothP_{\bar \rho}^\calB(\kappa) - \widetilde \gothP_{\bar \rho}^{\cl, \infty} \big) \gothC_{\bar \rho}^\infty \in \Err_{\bar \rho}$.
\end{proof}

\begin{notation}
Fix $\bar \rho \in \scrB(U; \psi_m)$ a residual pseudo-representation.
Let $\HP_{\bar \rho, r}$ (resp. $\NP_{\bar \rho, r}$) denote the Hodge polygon (resp. Newton polygon) of the matrix $\gothU_p^{\cl, \bar \rho, r}$. Let $\HP_{\bar \rho, r}(i)$ (resp. $\NP_{\bar \rho, r}(i)$) denote the $y$-coordinate of the polygon when the $x$-coordinate is $i$.
Let $\alpha_0(\bar\rho)_r\leq \cdots \leq \alpha_{ d_{\bar \rho, r}-1}(\bar \rho)_r$ denote the slopes of $\HP_{\bar \rho, r}$ in non-decreasing order.
Let $\ord_{\bar \rho, r}$ denote the multiplicity of  the slope $0$ in $\NP_{\bar \rho, r}$.
Once again, we point out that the polygons $\HP_{\bar \rho, r}$ and $\NP_{\bar \rho, r}$ and hence the slopes $\alpha_0(\bar \rho)_r, \dots, \alpha_{d_{\bar \rho, r}-1}(\bar \rho)_r$ depend only on $r$ modulo $q$, as opposed to $r$.

Write the characteristic power series of $U_p$ on $ \widetilde \gothP_{\bar \rho}^\calB(\kappa) S^{D,\dagger}_{\calB}(U; \kappa)$ as
\[
\Char \big(U_p, \widetilde \gothP_{\bar \rho}^\calB(\kappa) S^{D,\dagger}_{\calB}(U; \kappa)\big) = 1 + c_{\bar \rho, 1}(w) X + c_{\bar \rho, 2}(w) X^2 + \cdots \in 1+ \calO\langle w \rangle \llbracket X \rrbracket.
\]
Its zero in $\calW(x\psi_m, p^{-1}) \times \GG_{m, \mathrm{rig}}$ is the spectral curve $\Spc_{\bar \rho}$ (over the weight disk $\calW(x\psi_m, p^{-1})$).
We have
\[
\Spc  \times_\calW \calW(x\psi_m; p^{-1}) = \bigcup_{\bar \rho \in \scrB(U; \psi_m)} \Spc_{\bar \rho}.
\]
\end{notation}

\begin{theorem}
\label{T:main theorem each residual}
Assume $m \geq 4$ as before.
 Theorem~\ref{T:sharp Hodge bound}, Corollary~\ref{C:precise NP computation}, and Theorem~\ref{T:improved main theorem} hold for each $\bar \rho \in \scrB(U;\psi_m)$, in the following sense.
\begin{enumerate}
\item
For any $w_0 \in \calW(x\psi_m, p^{-1})$, the Newton polygon of the power series $1+ c_{\bar \rho, 1}(w_0)X + \cdots $ lies above the  polygon starting at $(0,0)$  with slopes given by
\begin{equation}
\label{E:slopes of improved Hodge polygon bar rho}
\bigcup_{r=0}^\infty \big\{
\alpha_0(\bar \rho)_r+r,\alpha_1(\bar \rho)_r+r, \dots, \alpha_{\bar\rho, d_{\bar\rho,r}-1}(\bar \rho)_r+r\big\}.
\end{equation}

\item
For each $n \in \NN$, let $\lambda_{\bar \rho, n}$ denote the sum of $n$ smallest numbers in \eqref{E:slopes of improved Hodge polygon bar rho}.
Then
\[
c_{\bar \rho, n}(w) \in p^{\lambda_{\bar \rho, n}} \cdot \calO\langle w\rangle^\times, \quad \textrm{for all }n \textrm{ of the form }n=n_{\bar \rho, k}=\sum_{r=0}^k d_{\bar\rho, r}.
\]
In particular, for any $w_0 \in \calW(x\psi_m; p^{-1})$, the  Newton polygon of the power series $1+ c_{\bar \rho, 1}(w_0)X + \cdots $ passes through the point $(n, \lambda_{\bar \rho, n})$ for $n = n_{\bar \rho, k}$.

\item
Fix $r=0, \dots, q-1$.
Suppose that $(s_0, \NP_{\bar \rho, r}(s_0))$ is a vertex of the Newton polygon $\NP_{\bar \rho, r}$ and suppose that
\begin{equation}
\label{E:NP<HP+1 bar rho}
\NP_{\bar \rho, r}(s) < \HP_{\bar \rho, r}(s-1) + 1
\textrm{ for all } s = 1, \dots, s_0.
\end{equation}
Then for any $s=0, \dots, s_0$, any $n \in \ZZ_{\geq 0}$, and any $w_0 \in \calW(x\psi_m; p^{-1})$, the $(n_{\bar \rho, qn+r}+s)$th slope of the power series $1+ c_1(w_0)X + \cdots$ is the $s$th $U_p$-slope on $V(\bar \rho)_r$ plus $qn+r$.

\item

The spectral variety $\Spc_{\bar \rho}$ is a disjoint union of subvarieties
\[
X_{\bar \rho,0},\ X_{\bar \rho,(0,1]},\ X_{\bar \rho,(1,2]}, \ X_{\bar \rho,(2,3]},\ \dots
\]
such that each subvariety is finite and flat over $\calW(x\psi_m; p^{-1})$, and  for any closed point $x \in X_{\bar \rho,?}$, we have $v(a_p(x)) \in ?$.
Moreover, the degree of $X_{\bar \rho,(r,r+1]}$ over $\calW(x\psi_m; p^{-1})$ is exactly
\[
d_{\bar\rho, r+1} + \ord_{\bar \rho, r+1} - \ord_{\bar \rho, r}.
\]

\item
Keep the notation and hypothesis as in (3) and (4).
For all $n\in \ZZ_{\geq 0}$ and for a number  $\beta>0$ appearing in the first $s_0$ $U_p$-slopes on $V(\bar \rho)_r$, the closed points $x \in X_{ \bar \rho, (qn+r, qn+r+1]}$ for which $v(a_p(x)) = \beta + qn+r$ form a connected component of $X_{\bar \rho, (qn+r, qn+r+1]}$.  It is finite and flat over $\calW(x\psi_m; p^{-1})$ of degree equal to the multiplicity of $\beta$ in the set of $U_p$-slopes of $V(\bar \rho)_r$.
\end{enumerate}

\end{theorem}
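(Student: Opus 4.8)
The strategy is to transport the whole problem, via the isomorphisms of Proposition~\ref{P:identification of V(pi) and PS}, to the explicit model $V(\bar\rho)^\infty_A$, and then rerun the arguments of Section~\ref{Section:improve Hodge polygon} with $t$ replaced by the $q$-periodic sequence $d_{\bar\rho,0},d_{\bar\rho,1},\dots$, with $\gothU_p^\cl(\psi_m\omega^{-2r})$ replaced by $\gothU_p^{\cl,\bar\rho,r}$, with $\alpha_i(\psi_m\omega^{-2r})$ replaced by $\alpha_i(\bar\rho)_r$, and with $\ord(\psi_m\omega^{-2n})$ replaced by $\ord_{\bar\rho,n}$. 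The one essential preliminary is the $\bar\rho$-analogue of Proposition~\ref{P:Tl Up overconvergent equiv classical}(2): conjugating the $U_p$-action on $\widetilde\gothP_{\bar\rho}^\calB(\kappa)S^{D,\dagger}_\calB(U;\kappa)$ by $\psi_{\bar\rho}$ produces an infinite matrix on $V(\bar\rho)^\infty_A$ congruent to the block-diagonal matrix $\gothU_p^{\bar\rho,\infty}$ (whose $(N,N)$-block is $p^N\cdot\gothU_p^{\cl,\bar\rho,N}$, with $\gothU_p^{\cl,\bar\rho,N}$ depending only on $N\bmod q$) modulo the evident $\bar\rho$-version of the space $\Err_p$ of \eqref{E:Errp}. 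To see this one writes the conjugated operator as $(I_\infty+\boldsymbol\epsilon)\circ\gothD_{\bar\rho}^\infty\,\gothU_p^\calB(\kappa)\,\widetilde\gothP_{\bar\rho}^\calB(\kappa)\,\gothC_{\bar\rho}^\infty$, substitutes $\gothU_p^\calB(\kappa)=\gothU_p^{\cl,\infty}+(\text{term in }\Err_p)$ and $\widetilde\gothP_{\bar\rho}^\calB(\kappa)=\widetilde\gothP_{\bar\rho}^{\cl,\infty}+(\text{term in }\Err)$ from Propositions~\ref{P:Tl Up overconvergent equiv classical}(2) and \ref{P:decomposition}(4), and uses the identities $\widetilde\gothP_{\bar\rho}^{\cl,\infty}\gothC_{\bar\rho}^\infty=\gothC_{\bar\rho}^\infty$ and $\gothD_{\bar\rho}^\infty\gothU_p^{\cl,\infty}\gothC_{\bar\rho}^\infty=\gothU_p^{\bar\rho,\infty}$. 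What is left is bookkeeping of the error spaces: one checks the multiplicativity relations such as $\gothU_p^{\cl,\infty}\cdot\Err\subseteq\Err_p$ and $\Err_p\cdot\Err\subseteq\Err_p$ (the extra $p$-divisibility is supplied by the factor $p^N$ on the $N$-th diagonal block), together with their mirror images for right multiplication, and that conjugation by the block-diagonal integral matrices $\gothC_{\bar\rho}^\infty,\gothD_{\bar\rho}^\infty$ preserves these gradings.

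Granting this congruence, parts (1), (4) and (5) are direct transcriptions. The proof of Theorem~\ref{T:sharp Hodge bound}(1) yields the Hodge-polygon lower bound \eqref{E:slopes of improved Hodge polygon bar rho}, hence (1); the proof of Theorem~\ref{T:improved main theorem} (estimating the number of slopes $\le n$ against the Hodge bound and exhibiting the ordinary count $\ord_{\bar\rho,n}$) yields (4); and (5) follows from (3) and (4) exactly as in Remark~\ref{R:global decomposition under condition}.

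The substantive point is (2), on which (3) also rests, and it needs the $\bar\rho$-analogue of Lemma~\ref{L:determine of Up deformed}: the determinant of the row-normalized deformed matrix on the $\bar\rho$-part of $S_2^D(U;\psi_{m,w}\omega^{-2r};\omega^r)$ reduces modulo $\varpi$ to a unit of $\FF$. Rather than reproving the pairing identity I would deduce this from Lemma~\ref{L:determine of Up deformed} itself. The idempotents $\widetilde\gothP_{\bar\rho,r}^\cl$ are built from tame Hecke operators; since $\psi_{m,w}\equiv\psi_m\pmod\varpi$, they act on the torsion space $S_2^D(U;\psi_{m,w}\omega^{-2r};\omega^r)$ over $\calO/p^2\calO[w]$ and remain mutually orthogonal idempotents there, so in a basis adapted to the $\bar\rho$-decomposition the full deformed $U_p$-matrix is block-diagonal over $\bar\rho\in\scrB(U;\psi_m)$. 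Because the Hodge slopes of $U_p$ on $S_2^D(U;\psi_m\omega^{-2r})$ are the sorted union over $\bar\rho$ of the $\alpha_i(\bar\rho)_r$ (Subsection~\ref{S:Hodge v.s. Newton}(6)), the row-normalization of the full matrix is the product of the row-normalizations of the $\bar\rho$-blocks, so $\det(\overline\gothU_p^{\cl,\bfe}(\psi_{m,w}\omega^{-2r}))$ equals, in $\FF[w]$, the product over $\bar\rho$ of the $\bar\rho$-block determinants. Since the left-hand side lies in $\FF^\times$ by Lemma~\ref{L:determine of Up deformed} and $\FF[w]$ is a domain, each factor lies in $\FF^\times$. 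Feeding this into the determinant computation in the proof of Theorem~\ref{T:sharp Hodge bound}(2), applied to $\gothU_p^{\bar\rho,\infty}$, gives $c_{\bar\rho,n_{\bar\rho,k}}(w)\in p^{\lambda_{\bar\rho,n_{\bar\rho,k}}}\calO\langle w\rangle^\times$, i.e.\ (2); the refinement in the proof of Corollary~\ref{C:precise NP computation} then gives (3).

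I expect the main obstacle to be purely organizational: carrying the several error spaces ($\Err$, $\Err_p$, $\Err_{\bar\rho}$, and their $p$-refined and product versions) through the non-square block-diagonal conjugations by $\gothC_{\bar\rho}^\infty,\gothD_{\bar\rho}^\infty$ without losing the $p$-divisibility that keeps the row-normalization in (1) integral — this is precisely where one needs $\gothU_p^\calB(\kappa)\cdot\Err\subseteq\Err_p$ rather than merely $\subseteq\Err$. The only genuinely new conceptual input beyond Section~\ref{Section:improve Hodge polygon} is the compatibility of the idempotents $\widetilde\gothP_{\bar\rho,r}^\cl$ with the coefficient ring $\calO/p^2\calO[w]$ and with the Hodge-slope filtration, which is what makes the reduction of (2) to Lemma~\ref{L:determine of Up deformed} legitimate.
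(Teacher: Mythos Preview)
Your proposal is correct and follows essentially the same route as the paper's own proof: transport $U_p$ to $V(\bar\rho)^\infty_A$ via $\psi_{\bar\rho}$, expand $(\psi_{\bar\rho})^{-1}\gothU_p^\calB\psi_{\bar\rho}$ using Propositions~\ref{P:Tl Up overconvergent equiv classical}(2) and~\ref{P:decomposition}(4) to land in $\gothU_p^{\bar\rho,\infty}+\Err_{\bar\rho,p}$, and then rerun Theorem~\ref{T:sharp Hodge bound}, Corollary~\ref{C:precise NP computation}, and Theorem~\ref{T:improved main theorem}; for (2) the paper likewise lifts the idempotents to $S_2^D(U;\psi_{m,w}\omega^{-2r})$ and deduces each block determinant lies in $\FF^\times$ from the product formula $\prod_{\bar\rho}\det\overline\gothU_p^{\cl,\bar\rho,w,r}=\det\overline\gothU_p^{\cl,\bfe}(\psi_{m,w}\omega^{-2r})\in\FF^\times$. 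The only cosmetic difference is the order in which you write $\gothU_p^\calB$ and $\widetilde\gothP_{\bar\rho}^\calB(\kappa)$ (hence $\gothU_p^{\cl,\infty}\cdot\Err\subseteq\Err_p$ versus the paper's $\Err\cdot\gothU_p^{\cl,\infty}\subseteq\Err_p$), which is immaterial since the two operators commute.
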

\begin{proof}
By Proposition~\ref{P:identification of V(pi) and PS}, both $\varphi_{\bar \rho}$ and $\psi_{\bar \rho}$ are isomorphisms of Banach spaces.
So we have
\[
\Char\big(U_p;  \widetilde \gothP_{\bar \rho}^\calB(\kappa) S^{D,\dagger}_{\calB}(U; \kappa) \big) = \Char\big( (\psi_{\bar \rho})^{-1} \circ \gothU_p^\calB\circ \psi_{\bar \rho}; V({\bar \rho})_A^\infty\big).
\]

Recall from Proposition~\ref{P:Tl Up overconvergent equiv classical}(2) that the infinite block diagonal matrix $\gothU_p^{\cl, \infty} = \Diag \{\gothU_p^\cl(\psi_m),\, p\cdot  \gothU_p^\cl(\psi_m \omega^{-2}),\, p^2\cdot \gothU_p^\cl(\psi_m \omega^{-4}),\, \dots\}$ satisfies
\[
\gothU_p^\calB(\kappa) - \gothU_p^{\cl, \infty} \in \textrm{ the error space }\Err_p \textrm{ in \eqref{E:Errp}}.
\]

We introduce the following error space
\[
\Err_{\bar \rho, p}: =
\begin{pmatrix}
p\rmM_{d_{\bar \rho, 0}}(A^\circ) & p\rmM_{d_{\bar \rho, 0}\times d_{\bar \rho, 1}}(A^\circ) & p^2 \rmM_{d_{\bar \rho, 0}\times d_{\bar \rho, 2}}(A^\circ) & p^3\rmM_{d_{\bar \rho, 0}\times d_{\bar \rho, 3}}(A^\circ) &\cdots
\\
p^3\rmM_{d_{\bar \rho, 1}\times d_{\bar \rho, 0}}(A^\circ) &  p^2 \rmM_{d_{\bar \rho, 1}}(A^\circ) & p^2\rmM_{d_{\bar \rho, 1}\times d_{\bar \rho, 2}}(A^\circ) &p^3 \rmM_{d_{\bar \rho, 1}\times d_{\bar \rho, 3}}(A^\circ) &  \cdots
\\
p^4  \rmM_{d_{\bar \rho, 2}\times d_{\bar \rho, 0}}(A^\circ) &p^4 \rmM_{d_{\bar \rho, 2}\times d_{\bar \rho, 1}}(A^\circ) & p^3 \rmM_{d_{\bar \rho, 2}\times d_{\bar \rho, 2}}(A^\circ)& p^3\rmM_{d_{\bar \rho, 2}\times d_{\bar \rho, 3}}(A^\circ) & \cdots\\
p^5\rmM_{d_{\bar \rho, 3}\times  d_{\bar \rho, 0}}(A^\circ) &p^5\rmM_{d_{\bar \rho, 3}\times  d_{\bar \rho, 1}}(A^\circ) &p^5\rmM_{d_{\bar \rho, 3}\times d_{\bar \rho, 2}}(A^\circ) &  p^4\rmM_{d_{\bar \rho, 3}}(A^\circ)&  \cdots
\\
\vdots & \vdots & \vdots & \vdots & \ddots
\end{pmatrix},
\]
where the $(i,j)$-block entry is
\begin{itemize}
\item
$ p^{i+1}\rmM_{d_{\bar \rho,i}}(A^\circ)$ if $i=j$,
\item
$p^{i+2}\rmM_{d_{\bar \rho, i}\times d_{\bar \rho, j}}(A^\circ)$ if $i > j$, and
\item
$p^{j}\rmM_{d_{\bar \rho, i}\times d_{\bar \rho, j}}(A^\circ)$ if $i < j$.
\end{itemize}
Rewrite the composite $(\psi_{\bar \rho})^{-1} \circ \gothU_p^\calB \circ  \psi_{\bar{\rho}}$ as
\begin{align}
\nonumber
(\psi_{\bar \rho})^{-1}& \circ \gothU_p^\calB \circ  \psi_{\bar \rho}
=
(\mathbf{id} + \boldsymbol \epsilon)
\gothD_{\bar \rho}^\infty  \widetilde \gothP_{\bar \rho}^\calB(\kappa) \gothU_p^\calB \gothC_{\bar \rho}^\infty
\\
\nonumber
&= \gothD_{\bar \rho}^\infty    \widetilde \gothP_{\bar \rho}^\calB(\kappa)
\gothU_p^\calB\gothC_{\bar \rho}^\infty
+
\boldsymbol \epsilon
\gothD_{\bar \rho}^\infty  \widetilde \gothP_{\bar \rho}^\calB(\kappa) \gothU_p^\calB \gothC_{\bar \rho}^\infty \\
\label{E:last estimate}
&=\gothD_{\bar \rho}^\infty  \widetilde \gothP_{\bar \rho}^{\cl, \infty} \gothU_p^{\cl, \infty}  \gothC_{\bar \rho}^\infty
+
 \gothD_{\bar \rho}^\infty ( \widetilde \gothP_{\bar \rho}^\calB(\kappa) \gothU_p^\calB - \widetilde \gothP_{\bar \rho}^{\cl, \infty} \gothU_p^{\cl, \infty})  \gothC_{\bar \rho}^\infty
+\boldsymbol \epsilon \gothD_{\bar \rho}^\infty\widetilde \gothP_{\bar \rho}^\calB (\kappa) \gothU_p^\calB  \gothC_{\bar \rho}^\infty .
\end{align}
Here the second equality in the first line follows from the commutativity of $\widetilde \gothP_{\bar \rho}^\calB(\kappa)$ and $ \gothU_p^\calB$ as they are (limits of) Hecke operators.
It suffices to understand each of the terms.
\begin{enumerate}
\item[(i)]
The first term $\gothD_{\bar \rho}^\infty\widetilde \gothP_{\bar \rho}^{\cl, \infty} \gothU_p^{\cl, \infty}    \gothC_{\bar \rho}^\infty$ of \eqref{E:last estimate} exactly gives the action of $U_p$ on the space of classical automorphic forms.
\item[(ii)]
By Proposition~\ref{P:Tl Up overconvergent equiv classical}, we easily deduce that
\begin{align*}
 \widetilde \gothP_{\bar \rho}^\calB (\kappa)
\gothU_p^\calB& - \widetilde \gothP_{\bar \rho}^{\cl, \infty} \gothU_p^{\cl, \infty}  = \widetilde \gothP_{\bar \rho}^\calB(\kappa) (\gothU_p^\calB - \gothU_p^{\cl, \infty})
+  ( \widetilde \gothP_{\bar \rho}^\calB(\kappa) - \widetilde \gothP_{\bar \rho}^{\cl, \infty})\gothU_p^{\cl, \infty}
\\
& \in  (\widetilde \gothP_{\bar \rho}^{\cl, \infty}  + \Err)\cdot \Err_p + \Err \cdot \gothU_p^{\cl, \infty}  \ \subseteq  \Err_p;
\end{align*}
so the middle term of \eqref{E:last estimate}
\[
\gothD_{\bar \rho}^\infty ( \widetilde \gothP_{\bar \rho}^\calB(\kappa) \gothU_p^\calB  - \widetilde \gothP_{\bar \rho}^{\cl, \infty} \gothU_p^{\cl, \infty})  \gothC_{\bar \rho}^\infty \in \gothD_{\bar \rho}^\infty\cdot \Err_p \cdot \gothC_{\bar \rho}^\infty \subseteq \Err_{{\bar \rho}, p}.
\]
\item[(iii)]
We write
\[\boldsymbol \epsilon
\gothD_{\bar \rho}^\infty \widetilde \gothP_{\bar \rho}^\calB(\kappa) \gothU_p^\calB  \gothC_{\bar \rho}^\infty =\boldsymbol \epsilon \gothD_{\bar \rho}^\infty (\widetilde \gothP_{\bar \rho}^\calB(\kappa) \gothU_p^\calB  -\gothP_{\bar \rho}^{\cl,\infty} \gothU_p^{\cl, \infty}) \gothC_{\bar \rho}^\infty  +\boldsymbol \epsilon \gothD_{\bar \rho}^\infty\gothP_{\bar \rho}^{\cl,\infty}  \gothU_p^{\cl, \infty} \gothC_{\bar \rho}^\infty.
\]
The second term belongs to $\Err_{{\bar \rho}, p}$ because
 $\boldsymbol \epsilon \in \Err_{\bar \rho}$ by Proposition~\ref{P:identification of V(pi) and PS}.
For the first term, we use the argument in (ii) to see that it belongs to
\[ \boldsymbol \epsilon \cdot
\gothD_{\bar \rho}^\infty\cdot \Err_p \cdot\gothC_{\bar \rho}^\infty\ \subseteq \Err_{{\bar \rho}, p}.
\]

\end{enumerate}
Combining the computation above, we see that $(\psi_{\bar \rho})^{-1} \circ \gothU_p^\calB\circ \psi_{\bar \rho}$ belongs to
\begin{equation}
\label{E:Up on bar rho}
\begin{pmatrix}
 \gothU_p^{\cl, \bar\rho, 0} &&&\\
& p\cdot \gothU_p^{\cl, \bar\rho, 1} && \\
&&p^2\cdot \gothU_p^{\cl, \bar\rho, 2} & \\
 &  & &\ddots
\end{pmatrix} + \Err_{\bar \rho, p}.
\end{equation}

At this point, (1)--(4) of the Theorem can be proved in the same way as they were proved in Theorem~\ref{T:sharp Hodge bound}, Corollary~\ref{C:precise NP computation}, and Theorem~\ref{T:improved main theorem}, with the modifications indicated below.

(1) already follows from the estimate \eqref{E:Up on bar rho} because each $\gothU_p^{\cl, \bar \rho, r}$ is already written in the form adapted to its Hodge polygon.

For (2), we need to consider the action of $P_{\bar \rho}$ on the space $S_2^D(U; \psi_{m,w}\omega^{-2r})$ (see \eqref{E:S2 deformation} for the definition).
Let $\widetilde P_{\bar \rho}$ denote the limit $\lim_{n \to \infty} (P_{\bar \rho})^{p^n}$.
By the same argument as in Proposition~\ref{P:decomposition}, we have $\widetilde P_{\bar\rho}^2 = \widetilde P_{\bar \rho}$, $ \widetilde P_{\bar \rho}  \widetilde P_{\bar \rho'} = 0$ for $\bar \rho \neq \bar \rho'$, and $\sum_{ \bar\rho \in \scrB(U; \psi_m)}  \widetilde P_{\bar \rho} = \mathrm{id}$.
We use $V(\bar \rho, w)_r$ to denote the image $ \widetilde P_{\bar \rho} S_2^D(U; \psi_{m,w}\omega^{-2r})$, which is isomorphic to $V(\bar\rho)_r \otimes_\calO \calO/p^2\calO[w]$, as an $\calO$-module.
Let $\gothU_p^{\cl, \bar \rho, w, r}$ denote the matrix for the $U_p$-action on $V(\bar \rho, w)_r$ with respect to the basis $e_0(\bar\rho)_r, \dots, e_{d_{\bar\rho, r}-1}(\bar \rho)_r$; its $i$th row is divisible by $p^{\alpha_i(\bar \rho)_r}$, and all coefficients on $w$ belongs to $p\calO/ p^2\calO$.
We use
$\overline \gothU_p^{\cl, \bar\rho,w, r}$ to denote the matrix given by dividing the $i$th row of $\gothU_p^{\cl, \bar \rho,w, r}$ by $p^{\alpha_i(\bar \rho)_r}$.
As argued in the proof of Theorem~\ref{T:sharp Hodge bound}(2), it suffices to prove that $\det \overline \gothU_p^{\cl, \bar\rho,w, r}$ belongs to $\FF^\times \subseteq \FF[w]$ for each $r$.
However, this follows from the fact that the product
\[
\prod_{\bar\rho \in \scrB(U; \psi_m)}
\det \overline \gothU_p^{\cl, \bar\rho,w, r} = \det \overline \gothU_p^{\cl ,\bfe}(\psi_{m,w}\omega^{-2r}) \in \FF^\times.
\]

(3) and (4) follow from the arguments in Corollary~\ref{C:precise NP computation} and Theorem~\ref{T:improved main theorem} with no essential changes.
(5) follows from (3) immediately.
\end{proof}

\end{document}